\newtheoremstyle{bla}{}{}{\normalfont}{}{\bfseries}{}{0.5em}{}
\newtheoremstyle{bla2}{}{}{\itshape}{}{\bfseries}{}{0.5em}{}
\theoremstyle{bla2}
\newtheorem{theorem}{Theorem}[section]
\newtheorem{proposition}[theorem]{Proposition}
\newtheorem{lemma}[theorem]{Lemma}
\newtheorem{corollary}[theorem]{Corollary}
\theoremstyle{bla}
\newtheorem{definition}[theorem]{Definition}
\newtheorem{observation}[theorem]{Observation}
\newtheorem{example}[theorem]{Example}
\newtheorem{assumption}[theorem]{Assumption}
\newcommand{\R}{\mathbb{R}}
\newcommand{\Z}{\mathbb{Z}}
\newcommand{\N}{\mathbb{N}}
\let\int\relax
\DeclareMathOperator{\int}{int}
\DeclareMathOperator{\conv}{conv}
\newcommand{\A}{\mathcal{A}}
\newcommand{\ALG}{\mathtt{ALG}}
\newcommand{\lmin}{\lambda^{\min}}
\newcommand{\Lapprox}{\Lambda^{\textnormal{compact}}}
\newcommand{\bLapprox}{\bar{\Lambda}^{\textnormal{compact}}}
\newcommand{\Wapprox}{W^{\textnormal{cone}}}
\newcommand{\Wapproxone}{W^{\textnormal{compact}}}
\newcommand{\bWapproxone}{\bar{W}^{\textnormal{compact}}}
\newcommand{\Grid}{\Lambda^{\textnormal{Grid}}}
\newcommand{\proj}{\textnormal{proj}}
\newcommand{\LB}{\textnormal{LB}}
\newcommand{\UB}{\textnormal{UB}}
 \newcommand{\lb}{\textnormal{lb}}
 \newcommand{\ub}{\textnormal{ub}}
\begin{document}
\title{An Approximation Algorithm for a General Class of Multi-Parametric Optimization Problems\thanks{This work was supported by the DFG grants TH 1852/4-1 and RU 1524/6-1.}
}

\author{Stephan Helfrich\footnote{University of Kaiserslautern, Department of Mathematics, Paul-Ehrlich-Str.~14, D-67663~Kaiserslautern, Germany, email: \{helfrich,herzel,ruzika\}@mathematik.uni-kl.de}, Arne Herzel\footnotemark[\value{footnote}]~, Stefan Ruzika\footnotemark[\value{footnote}]~,	
	 Clemens Thielen\footnote{Weihenstephan-Triesdorf University of Applied Sciences, TUM Campus Straubing for Biotechnology and Sustainability, Am Essigberg~3, D-94315~Straubing, Germany, email: clemens.thielen@hswt.de
	}	
}


\maketitle

\begin{abstract}
	In a widely-studied class of multi-parametric optimization problems, the objective value of each solution is an affine function of real-valued parameters. Then, the goal is to provide an optimal solution set, i.e., a set containing an optimal solution for each non-parametric problem obtained by fixing a parameter vector. For many multi-parametric optimization problems, however, an optimal solution set of minimum cardinality can contain super-polynomially many solutions. Consequently, no polynomial-time exact algorithms can exist for these problems even if $\textsf{P}=\textsf{NP}$.
	
	We propose an approximation method that is applicable to a general class of multi-parametric optimization problems and outputs a set of solutions with cardinality polynomial in the instance size and the inverse of the approximation guarantee. This method lifts approximation algorithms for non-parametric optimization problems to their parametric version and provides an approximation guarantee that is arbitrarily close to the approximation guarantee of the approximation algorithm for the non-parametric problem. If the non-parametric problem can be solved exactly in polynomial time or if an FPTAS is available, our algorithm is an FPTAS.
	Further, we show that, for any given approximation guarantee, the minimum cardinality of an approximation set is, in general, not $\ell$-approximable for any natural number~$\ell$ less or equal to the number of parameters, and we
	discuss applications of our results to classical multi-parametric combinatorial optimizations problems.
	In particular, we obtain an FPTAS for the multi-parametric minimum $s$-$t$-cut problem, an FPTAS for the multi-parametric knapsack problem, as well as an approximation algorithm for the multi-parametric maximization of independence systems problem.\\

	\noindent
	Keywords: Multi-Parametric Optimization; Approximation Algorithm; Multi-Parametric Minimum $s$-$t$-Cut Problem; Multi-Parametric Knapsack Problem;
	Multi-Parametric Maximization of Independence Systems
	
\end{abstract}

\section{Introduction}
Many optimization problems depend on parameters whose values are unknown or can only be estimated. Changes in the parameters may alter the set of optimal solutions or even affect feasibility of solutions. \emph{Multi-parametric optimization models} describe the dependencies of the objective function and/or the constraints on the values of the parameters. That is, for any possible combination of parameter values, multi-parametric optimization problems ask for an optimal solution and its objective value. 

In this article, we consider \emph{linear multi-parametric optimization problems} in which the objective depends affine-linearly on each parameter.  For simplicity, we focus on minimization problems, but all our reasoning and results can be applied to maximization problems as well.
Formally, for $K \in \N \setminus \{0\}$, (an instance of) a \emph{linear $K$-parametric optimization problem}~$\Pi$ is given by a nonempty (finite or infinite) \emph{set~$X$ of feasible solutions},  functions~$a, b_k : X \rightarrow \R$, $k = 1, \dots, K$, 
and a \emph{parameter set}~$\Lambda \subseteq \R^K$. Then,
the optimization problem is typically formulated (cf.~\cite{Oberdieck2016,Pistikopoulos2012}) as
\begin{align*}
\begin{Bmatrix}
\displaystyle\inf_{x\in X} f(x,\lambda) \colonequals a(x) + \sum_{k = 1}^K \lambda_k \cdot b_k(x)
\end{Bmatrix}_{\lambda \in \Lambda}.
\end{align*}
Fixing a parameter vector~$\lambda \in \Lambda$ yields (an instance of) the \emph{non-parametric version}~$\Pi(\lambda)$ of the linear $K$-parametric optimization problem. 
Moreover, the function $f: \Lambda \rightarrow \R \cup \{-\infty\}, \lambda \mapsto f(\lambda) \colonequals \inf_{x \in X} f(x, \lambda)$ that assigns the optimal objective value of~$\Pi(\lambda)$ to each parameter vector~$\lambda \in \Lambda$, is called the \emph{optimal cost curve}.	
%
The goal is to find a set~$S'\subseteq X$ of feasible solutions that contains an optimal solution for~$\Pi(\lambda)$ for each $\lambda \in \Lambda$ for which $\inf_{x \in X} f(x,\lambda)$ is attained.
Such a set~$S'$ is called an \emph{optimal solution set} of the multi-parametric problem and induces a decomposition of the parameter set~$\Lambda$: For each solution~$x \in S'$, the associated \emph{critical region}~$\Lambda(x)$ subsumes all parameter vectors~$\lambda\in\Lambda$ such that~$x$ is optimal for~$\Pi(\lambda)$.  

For many linear multi-parametric optimization problems, however, the \emph{cardinality of any optimal solution set can be} super-polynomially large, even if $K=1$~(see, for example, \cite{Allman:complexity2parameterMinCut,Carstensen:PHD,Gassner+Klinz:parametric-assignment,Nikolova+etal:ESA06,Ruhe:parametric-network-flows}). In general, this rules out per se the existence of polynomial-time exact algorithms even if $\textsf{P}=\textsf{NP}$. Approximation provides a concept to substantially reduce the number of required solutions while still obtaining provable solution quality. For the non-parametric version~$\Pi(\lambda)$, approximation is defined as follows (cf.~\cite{williamson_shmoys_2011:book_approximation}):
\begin{definition}\label{def:approximation}
	For $\beta \geq 1$ and a parameter vector $\lambda \in \Lambda$ such that $f(\lambda) \geq 0$, a feasible solution $x \in X$ is called \emph{$\beta$-approximate} (or a \emph{$\beta$-approximation}) for the non-parametric version~$\Pi(\lambda)$ if $f(x, \lambda) \leq \beta \cdot f(x',\lambda)$ for all $x' \in X$.
\end{definition}
This concept can be adapted to linear multi-parametric optimization problems. There, the task is then to find a set of solutions that contains a $\beta$-approximate solution for each non-parametric problem~$\Pi(\lambda)$.
Formally, this is captured in the following definition (cf.~\cite{Bazgan+etal:parametric,Giudici+etal:param-knapsack}):
\begin{definition}
	\label{def:approxsol}
	For $\beta \geq 1$, a finite set~$S \subseteq X$ is called a \emph{$\beta$-approximation set} for~$\Pi$ if it contains a $\beta$-approximate solution $x \in S$ for~$\Pi(\lambda)$ for any $\lambda \in \Lambda$ for which $f(\lambda)\geq0$. 
	An algorithm~$\A$ that computes a $\beta$-approximation set for any instance~$\Pi$ in time polynomially bounded in the instance size is called a \emph{$\beta$-approximation algorithm}.		
	A \emph{polynomial time approximation scheme} (PTAS) 
	is a family $(\A_{\varepsilon})_{\varepsilon>0}$ of algorithms such that, for every $\varepsilon >0$, algorithm $\A_{\varepsilon}$ is a $(1 + \varepsilon)$-approximation algorithm. A PTAS $(\A_{\varepsilon})_{\varepsilon>0}$ is a \emph{fully polynomial-time approximation scheme} (FPTAS) if the running time of $\A_\varepsilon$ is in addition polynomial in~$\frac{1}{\varepsilon}$.
\end{definition}

Next, we discuss some assumptions that are necessary in order to ensure a well-defined notion of approximation and to allow for the existence of efficient approximation algorithms. Note that the outlined (technical) assumptions are rather mild and they are satisfied for multi-parametric formulations of a large variety of well-known optimization problems. This includes well-known problems such as the knapsack problem, the minimum $s$-$t$-cut problem, and the maximization of independence systems problem (see Section~\ref{sec:Applications}), as well as the assignment problem, the minimum cost flow problem, the shortest path problem, and the metric traveling salesman problem (see Section~$5$ in~\cite{Bazgan+etal:parametric}).

Similar to the case of non-parametric problems, where non-negativity of the optimal objective value is required in order to define approximation (cf.~\cite{williamson_shmoys_2011:book_approximation} and Definition~\ref{def:approximation} above), approximation for multi-parametric problems can only be defined if the optimal objective value~$f(\lambda)$ is non-negative for any~$\lambda\in\Lambda$.
To ensure this, assumptions on the parameter set and the functions $a,b_k$, $k=1,\dots,K$, are necessary. An initial approach would be to assume nonnegativity of the parameter vectors as well as nonnegativity of the functions~$a,b_k$, $k=1,\dots,K$. A natural generalization also allows for negative parameter vectors. To this end, we consider a lower bound~$\lmin \in \R^K$ on the parameter set, i.e., $\Lambda\colonequals\bigtimes_{k = 1}^K [\lmin_k, \infty)$ (Assumption \ref{ass:poly}~\ref{ass:parameterset}). Then, assuming $f(x,\lmin)$ and $b_k(x)$, $k=1,\dots,K$, to be nonnegative for all~$x\in X$ (Assumption~\ref{ass:poly}~\ref{ass:polycomputablebounds}) guarantees nonnegativity of the optimal objective value for any~$\lambda\in\Lambda$.

Moreover, solutions must be polynomially encodable\footnote{This is a typical assumption in approximation (see, e.g.,~\cite{Papadimitriou+Yannakakis:multicrit-approx}). Nevertheless, our method can also be applied to problems where only the \enquote*{relevant solutions} can be encoded polynomially in the instance size. For example, not all feasible solutions of linear programs can be encoded polynomially in the instance size since they are implicitly determined by finitely many inequalities. However, it is sufficient to restrict to basic feasible solutions, which have encoding length polynomially bounded in the instance size~\cite{Groetschel1993}. 
} and the values~$a(x)$ and~$b_k(x)$, $k=1,\dots,K$, must be efficiently computable for any~$x \in X$ in order for the problem to admit any polynomial-time approximation algorithm. Hence, we assume that any solution~$x \in X$ is of polynomial encoding length and the values~$a(x)$ and $b_k(x)$, $k=1,\dots,K$, can be computed in time polynomial in the instance size and the encoding length of~$x$ (Assumption~\ref{ass:poly}~\ref{ass:polycomputable}). This implies that the values~$a(x)$ and $b_k(x)$, $k=1,\dots,K$, are rationals of polynomial encoding length. Consequently, the assumptions made so far imply the existence of positive rational bounds~$\LB$ and~$\UB$ such that $b_k(x), f(x,\lmin) \in \{0\} \cup [\LB, \UB]$ for all $x \in X$ and all $k = 1,\dots,K$.  
It is further assumed that $\LB$ and $\UB$ can be computed polynomially in the instance size (Assumption~\ref{ass:poly}~\ref{ass:polycomputablebounds}). Note that the numerical values of~$\LB$ and~$\UB$ may still be exponential in the instance size.

Extending the results for $1$-parametric optimization problems from~\cite{Bazgan+etal:parametric}, we study how an exact or approximate algorithm~$\ALG$ for the non-parametric version can be used in order to approximate the multi-parametric problem and which approximation guarantee can be achieved when relying on polynomially many calls to~$\ALG$. Hence, the last assumption is the existence of an exact algorithm or an approximation algorithm for the non-parametric version (Assumption~\ref{ass:poly}~\ref{ass:polyExistenceALG}). In summary, the following assumptions are made:

\begin{assumption}\label{ass:poly}~
	\begin{enumerate}[label=(\alph*), ref=(\alph*)]
	  \item\label{ass:parameterset} For some given $\lmin = (\lmin_1,\dots,\lmin_K)^\top \in \mathbb R^K$, the parameter set is of the form $\Lambda = \bigtimes_{k = 1}^K [{\lmin_k} , \infty)$.
	  \item\label{ass:polycomputable} Any $x \in X$ can be encoded by a number of bits polynomial in the instance size and the values~$a(x)$ and $b_k(x)$, $k =1,\dots,K$, can be computed in time polynomial in the instance size and the encoding length of~$x$. 
	  \item\label{ass:polycomputablebounds} Positive rational bounds $\LB$ and $\UB$ such that $b_k(x), f(x,\lmin) \in \{0\} \cup [\LB, \UB]$ for all $x \in X$ and all $k = 1, \dots, K$ can be computed in time polynomial in the instance size.  
			\item\label{ass:polyExistenceALG} For some~$\alpha \geq 1$, there exists an algorithm~$\ALG_{\alpha}$ that returns, for any parameter vector~$\lambda \in \Lambda$, a solution~$x'$ such that $f(x',\lambda) \leq \alpha \cdot f(x,\lambda)$ for all~$x \in X$.\footnote{The approximation guarantee~$\alpha$ is assumed to be independent of~$\lambda$. However, it is allowed that~$\alpha$ depends on the instance (such that the encoding length of~$\alpha$ is polynomially bounded in the encoding length of the instance). 
			} The running time is denoted by $T_{\ALG_{\alpha}}$.
		\end{enumerate}
\end{assumption}


\subsection{Related Literature}\label{sec:Literature}

Linear $1$-parametric problems are widely-studied in the literature. Under the assumption that there exists an optimal solution for any non-parametric version, the parameter set can be decomposed into critical regions consisting of finitely many intervals~$(-\infty, \lambda^1],[\lambda^1, \lambda^2], \dots , [\lambda^L, \infty)$ with the property that, for each interval, one feasible solution is optimal for all parameters within the interval. Assuming that~$L$ is chosen as small as possible, the parameter values~$\lambda^1,\dots, \lambda^L$ are exactly the points of slope change (the \emph{breakpoints}) of the piecewise-linear optimal cost curve. A general solution approach for obtaining the optimal cost curve is presented by Eisner and Severance~\cite{Eisner+Severance:method}. 
Exact solution methods for specific optimization problems exist for the linear $1$-parametric shortest path problem~\cite{Karp+Orlin:parametric-shortest-path}, the linear $1$-parametric assignment problem~\cite{Gassner+Klinz:parametric-assignment}, and the linear $1$-parametric knapsack problem~\cite{Eben-Chaime:par-knapsack}. Note that linear $1$-parametric optimization problems also appear in the context of some well-known combinatorial problems. For example, Karp and Orlin~\cite{Karp+Orlin:parametric-shortest-path} observe that the minimum mean cycle problem can be reduced to a linear $1$-parametric shortest path problem~\cite{Carstensen:PHD,Mulmuley+Shah:parametric-sp}, and Young et al.~\cite{Young+etal:parametric-sp} note that linear $1$-parametric programming
problems arise in the process of solving the minimum balance problem, the minimum concave-cost dynamic network flow problem~\cite{Graves+Orlin:concave-cost}, and matrix scaling~\cite{Orlin+Rothblum:optimal-scalings,Schneider+Schneider:matrix-scaling}.

These and many other problems share an inherent difficulty (see, e.g., Carstensen~\cite{Carstensen:PHD}): The optimal cost curve may have super-polynomially many breakpoints in general. This precludes the existence of polynomial-time exact algorithms even if $\textsf{P} = \textsf{NP}$. Nevertheless, there exist $1$-parametric optimization problems for which the number of breakpoints is polynomial in the instance size. For example, this is known for linear $1$-parametric minimum spanning tree problems~\cite{Fernandez-Baca+etal:SWAT96} as well as for special cases of linear $1$-parametric binary integer programs~\cite{Carstensen:PHD,Carstensen:parametric}, linear $1$-parametric maximum flow problems~\cite{Gallo+etal:parametric-flow,McCormick:parametric-max-flow}, and linear $1$-parametric shortest path problems~\cite{Erickson:SODA2010,Karp+Orlin:parametric-shortest-path,Young+etal:parametric-sp}.

Exact solution methods for general linear  multi-parametric optimization problems are studied by Gass and Saaty~\cite{Gass1955,Saaty1954} and Gal and Nedoma~\cite{Gal1972}. The minimum number of solutions needed to decompose the parameter set into critical regions, called the \emph{parametric complexity},\footnote{Also referred to as combinatorial facet complexity or facet complexity~\cite{Aissi2015}.} is a natural criterion to measure the complexity. As for the $1$-parametric case, the parametric complexity of a variety of problems is super-polynomial in the instance size. This even holds true for the special cases of minimum $s$-$t$-cut problems whose $1$-parametric versions are tractable~\cite{Allman:complexity2parameterMinCut}.
Known exceptions are certain linear $K$-parametric binary integer programs~\cite{Carstensen:PHD}, various linear $K$-parametric multiple alignment problems~\cite{FernandezBaca2000a}, linear $K$-parametric global minimum cut problems~\cite{Aissi2015,Karger2016}, and the linear $K$-parametric minimum spanning tree problem~\cite{Seipp:thesis}.

As outlined above, many linear multi-parametric optimization problems do not admit polynomial-time algorithms in general, even if $\textsf{P}=\textsf{NP}$ and $K = 1$. This fact strongly motivates the design of approximation algorithms for multi-parametric optimization problems. So far, approximation schemes exist only for linear $1$-parametric optimization problems. A general algorithm, which can be interpreted as an approximate version of the method of Eisner and Severance, is presented by Herzel et al.~\cite{Bazgan+etal:parametric}. 
The approximation of the linear $1$-parametric 0-1-knapsack problem is considered in~\cite{Giudici+etal:param-knapsack,Halman2018,Holzhauser2017}.

We conclude this section by expounding the relationship between multi-parametric optimization and multi-objective optimization. We first mention similarities and then discuss differences between (the approximation concepts for) both types of problems.
In a multi-objective optimization problem, the $K+1$~objective functions~$a,b_k$, $k=1,\dots,K$, are to be optimized over the feasible set~$X$ simultaneously, and a $\beta$-approximation set is a set~$S'\subseteq X$ of feasible solutions such that, for each solution $x \in X$, there exists a solution $x' \in S'$ that is at most a factor of~$\beta$ worse than~$x$ in each objective function~$a,b_k$, $k=1,\dots,K$. 
We refer to the seminal work of Papadimitriou and Yannakakis~\cite{Papadimitriou+Yannakakis:multicrit-approx} for further details.

When restricting to nonnegative parameter sets~$\Lambda$, linear multi-parametric problems can be solved exactly by methods that compute so-called (extreme) supported solutions of multi-objective problems. Moreover, since the functions~$a,b_k$, $k=1,\dots,K$, are linearly combined by a nonnegative parameter vector, multi-objective approximation sets are also multi-parametric approximation sets in this case. Surveys on exact methods and on the approximation of multi-objective optimization problems are provided by Ehrgott et al.~\cite{Ehrgott2016:surveyMOCO} and Herzel et al.~\cite{Herzel+etal:survey}, respectively. 
Using techniques from multi-objective optimization with the restriction that the functions $a,b_k$ are assumed to be strictly positive, multi-parametric optimization problems with nonnegative parameter sets are approximated in~\cite{Daskalakis:chord,Diakonikolas2011approximation,Diakonikolas2008succinct}. We note that the proposed concepts heavily rely on scaling of the objectives such that, for each solution~$x \in X$, all the pair-wise ratios of $a(x),b_k(x)$, $k=1,\dots,K$, are bounded by two. This clearly cannot be done if (strict subsets of) the function values of solutions~$x\in X$ are equal to zero.

Despite these connections, there are significant differences between the approximation of multi-pa\-ra\-metric and multi-objective problems: (1)~As already pointed out by Diakonikolas~\cite{Diakonikolas2011approximation},  the class of problems admitting an efficient multi-parametric approximation algorithm is larger than the class of problems admitting an efficient multi-objective approximation algorithm. For example, the multi-parametric minimum $s$-$t$-cut problem with positive parameter set can be approximated efficiently~\cite{Diakonikolas2011approximation}, whereas it is shown in~\cite{Papadimitriou+Yannakakis:multicrit-approx} that there is no FPTAS for constructing a multi-objective approximation set for the bi-objective minimum $s$-$t$-cut problem unless $\textsf{P}=\textsf{NP}$.
This is also highlighted by the simple fact that (2)~multi-objective approximation is not well-defined for negative objectives, whereas multi-parametric approximation allows the functions $a,b_k$ to be negative as long as the parameter set~$\Lambda$ is restricted such that $f(x,\lambda) \geq 0$ for all solutions~$x \in X$ and all parameter vectors~$\lambda\in \Lambda$. (3)~For nonnegative parameter sets, Herzel et al.\ \cite{Helfrich+etal:cones} show that, in the case of minimization, a multi-parametric $\beta$-approximation set is only a multi-objective $((K+1) \cdot \beta)$-approximation set. In the case of maximization, they even show that no multi-objective approximation guarantee can be achieved by a multi-parametric approximation in general.
(4)~There also exist substantial differences with respect to the minimum cardinality of approximation sets: For some $M \in \N$, consider a $2$-parametric maximization problem with feasible solutions $x^0, \dots, x^M$ such that $a(x^i) = \beta^{2i}$, $b_1(x^1) = \beta^{2(M-i)}$ for $i = 0,\dots, M$. Then, any multi-objective $\beta$-approximation set must contain all solutions, whereas $\{x^0,x^M\}$ is a multi-parametric $\beta$-approximation set.

Consequently, existing approximation algorithms for $1$-parametric and/or multi-objective optimization problems are not sufficient for obtaining efficient and broadly-applicable approximation methods for general multi-parametric optimization problems. This motivates the article at hand, in which we establish a theory of and provide an efficient method for the approximation of general linear multi-parametric problems.

\subsection{Our Contribution}
We provide a general approximation method for a large class of multi-parametric optimization problems by extending the ideas of both the approximation algorithm of Diakonikolas et al.~\cite{Diakonikolas2011approximation,Diakonikolas2008succinct} and the $1$-parametric approximation algorithm of Herzel et al.~\cite{Bazgan+etal:parametric} to linear multi-parametric problems. 
Note that, in~\cite{Bazgan+etal:parametric}, only $1$-parametric problems are considered, which leads to an easier structure of the optimal cost curve due to the one-dimensional parameter set, which allows for a bisection-based approximation algorithm. This bisection-based approach cannot be generalized to multi-dimensional parameter sets as considered here.

For any $0 < \varepsilon < 1$, we show that, if the non-parametric version can be appro\-xi\-mated within a factor of $\alpha \geq 1$, then the linear multi-parametric problem can be approximated within a factor of $(1 + \varepsilon) \cdot \alpha$ in running time polynomially bounded by the size of the instance and $\frac{1}{\varepsilon}$. That is, the algorithm outputs a set of solutions that, for any feasible vector of parameter values, contains a solution that $((1+ \varepsilon) \cdot \alpha)$-approximates all feasible solutions in the corresponding non-parametric problem.
Consequently, the availability of a polynomial-time exact algorithm or an (F)PTAS for the non-parametric problem implies the existence of an (F)PTAS for the multi-parametric problem.

In Section \ref{sec:ApproxScheme}, we show basic properties of the parameter set with respect to approximation. These results allow a decomposition of the parameter set by means of assigning each vector of parameter values to the approximating solution. We state our polynomial-time (multi-parametric) approximation method for the general class of linear multi-parametric optimization problems.

Furthermore, we discuss the task of finding a set of solutions with minimum cardinality that approximates the linear $K$-parametric optimization problem in Section~\ref{sec:minCard}. We adapt the impossibility result of~\cite{Diakonikolas2011approximation,Diakonikolas2008succinct}, which states that there does not exist an efficient approximation algorithm that provides any constant approximation factor on the minimum cardinality if the non-parametric problem can be approximated within a factor of $1+ \delta$ for some $\delta >0$. We extend this to the case that an exact non-parametric algorithm is available. Here, we show that there cannot exist an efficient approximation algorithm that yields an approximation set with cardinality less than or equal to $K$~times the minimum cardinality.

Section~\ref{sec:Applications} discusses applications of our general approximation algorithm to multi-parametric versions of several well-known optimization problems.
In particular, we obtain fully polynomial-time approximation schemes for the linear multi-parametric minimum $s$-$t$-cut problem 
and the multi-parametric knapsack problem (where approximation schemes for the linear $1$-parametric version have been presented in~\cite{Giudici+etal:param-knapsack,Holzhauser2017}). We also obtain an approximation algorithm for the multi-parametric maximization problem of independence systems, a class of problems where the well-known greedy method is an approximation algorithm for the non-parametric version.

\section{A General Approximation Algorithm}\label{sec:ApproxScheme}
We now present our approximation method for linear multi-parametric optimization problems satisfying Assumption~\ref{ass:poly}. We first sketch the general idea and then discuss the details.
In the following, given some $\beta \geq 1$, we simply say that~$x$ is a $\beta$-approximation for~$\lambda$ instead of~$x$ is a $\beta$-approximation for~$\Pi(\lambda)$ if this does not cause any confusion. Clearly, for each solution~$x \in X$, there is a (possibly empty) subset of parameter vectors $\Lambda' \subseteq \Lambda$ such that~$x$ is a $\beta$-approximation for all parameter vectors~$\lambda' \in \Lambda'$. 
Hence, the notion of $\beta$-approximation (sets) is relaxed as follows: A solution~$x$ is a $\beta$-approximation for $\Lambda' \subseteq \Lambda$ if it is a $\beta$-approximation for every $\lambda' \in \Lambda'$. Analogously, a set~$S \subseteq X$ is a $\beta$-approximation set for $\Lambda' \subseteq \Lambda$ if, for any $\lambda' \in \Lambda'$, there exists a solution~$x \in S$ that is a $\beta$-approximation for~$\lambda'$.

\bigskip

Let $0 < \varepsilon < 1$ be given and let $\alpha \geq 1$ be the approximation guarantee obtained by the algorithm~$\ALG_\alpha$ for the non-parametric version as in Assumption~\ref{ass:poly}~\ref{ass:polyExistenceALG}. The general idea of our multi-parametric approximation method can be described as follows: 
We show that there exists a compact subset $\Lapprox \subseteq \Lambda$ with the following property:
\begin{enumerate}[label=(\Alph*), ref=(\Alph*)]
	\item \label{property:LApprox} For each parameter vector $\lambda \in \Lambda \setminus \Lapprox$, there exists a parameter vector~$\lambda' \in \Lapprox$ such that any $((1 + \frac{\varepsilon}{2}) \cdot \alpha)$-approximation for~$\lambda'$ is also a $((1+ \frac{\varepsilon}{2}) \cdot \alpha + \frac{\varepsilon}{2})$-approximation for $\lambda$ (see Proposition~\ref{prop:WapproxForAugmentedProblem} and Corollary~\ref{cor:approxnormedweightset}). Thus, since $((1+ \frac{\varepsilon}{2}) \cdot \alpha + \frac{\varepsilon}{2}) \leq (1 + 2 \cdot \frac{\varepsilon}{2}) \cdot \alpha) = (1+\varepsilon)\cdot \alpha$, any $((1 + \frac{\varepsilon}{2}) \cdot \alpha)$-approximation for~$\lambda'$ is, in particular, a $((1+\varepsilon)\cdot \alpha)$-approximation for $\lambda$.
\end{enumerate}

Then, a grid $\Grid \subseteq \Lambda$ is constructed, where each~$\lambda' \in \Grid$ is computed as $\lambda'_k = \lmin_k + (1 + \frac{\varepsilon}{2})^{l_k}$ for some~$l_k \in \Z$ and $k = 1, \dots, K$, such that the following holds:
\begin{enumerate}[resume,label=(\Alph*), ref=(\Alph*)]
	\item \label{property:Cardinality} The cardinality of $\Grid$ is polynomially bounded in the encoding length of the instance and $\frac{1}{\varepsilon}$ (but exponential in~$K$), see~Proposition~\ref{lem:numberGridPoints}. 
	
	\item \label{property:Grid} For each parameter vector $\lambda' \in \Lapprox$, there exits a grid vector~$\bar{\lambda} \in \Grid$ such that 
	\begin{align*}
	    \bar{\lambda}_k - \lmin_k \leq \lambda'_k - \lmin_k \leq  \left(1 + \frac{\varepsilon}{2}\right) (\bar{\lambda}_k - \lmin_k) \quad\text{ for } k=1,\dots,K.
	\end{align*}
	Then, any $\alpha$-approximation for $\bar{\lambda}$ is a $((1 + \frac{\varepsilon}{2}) \cdot \alpha)$-approximation for $\lambda'$ (see Proposition~\ref{prop:apprximation regionLambda}). 
\end{enumerate}
It follows that, for each parameter vector~$\lambda \in \Lambda \setminus \Lapprox$, there exist a parameter vector $\bar{\lambda} \in \Lapprox$ and a grid vector~$\lambda' \in \Grid$ such that any $\alpha$-approximation for~$\lambda'$ is a $((1 + \frac{\varepsilon}{2}) \cdot \alpha)$-approximation for $\bar{\lambda}$ and, thus, a $((1 +  \varepsilon) \cdot \alpha)$-approximation for $\lambda$. Hence, algorithm~$\ALG_{\alpha}$ can be applied for the polynomially many parameter vectors in~$\Grid$, and collecting all solutions results in a $((1+ \varepsilon) \cdot \alpha)$-approximation set~$S$ for~$\Lambda$. 
This method yields a multi-parametric (F)PTAS if either a polynomial-time exact algorithm $\ALG_{1}$ or an (F)PTAS for the non-parametric version is available. Moreover, this approximation algorithm allows to easily assign  the corresponding approximate solution~$x \in S$ to each parameter vector~$\lambda \in \Lambda$.

\bigskip
We now present the details of the algorithm and start with Property~\ref{property:LApprox}. To this end, it is helpful to also allow parameter dependencies in the constant term. Hence, we define $F_0(x) \colonequals f(x,\lmin)$ and $F_k(x) \colonequals b_k(x)$, $k =1, \dots, K$, for all $x \in X$. Further, we let
$\R^{K+1}_\geqq \colonequals \{ w \in \R^{K+1} : w_i \geq 0, i = 0, \dots, K\}$ denote the $(K+1)$-dimensional nonnegative orthant.
Using this notation, the \emph{augmented multi-parametric problem} reads 
\begin{align}\label{eq:augmentedmultiparam}
\begin{Bmatrix}
\displaystyle\min_{ x \in X}  w_0 F_0(x) + w_1 F_1(x) + \dots + w_K F_K(x)
\end{Bmatrix}_{w \in \R^{K+1}_\geqq}
\end{align}
and the goal is to provide an optimal solution for any $w \in \R^{K+1}_\geqq$.
The vectors $w \in \R^{K+1}_\geqq$ are called \emph{weights}, and the  set of all weights is called the \emph{weight set} in order to distinguish it from the parameter set of the non-augmented problem. The terms $\beta$-approximate solution and $\beta$-approximation set for the augmented problem are defined analogously to Definitions~\ref{def:approximation} and~\ref{def:approxsol}, respectively. Note that the non-parametric version~$\Pi(\lambda)$ of~$\Pi$ for some $\lambda = (\lambda_1, \dots, \lambda_K) \in \Lambda$ coincides with the non-parametric version of the augmented problem for the weight~$w = (1, \lambda_1 - \lmin_1,\dots,\lambda_K - \lmin_K)$.

\smallskip

A solution~$x^*$ is optimal for some weight~$w \in \R^{K+1}_\geqq$ if and only if $x^*$ is optimal for $t \cdot w$ for any positive scalar $t>0$. An analogous result holds in the approximate sense: 
\begin{observation}\label{lem:multapproxinvariant}
	Let $x,x^* \in X$ be two feasible solutions. Then, for any positive scalar $t >0$ and $\beta \geq 1$, it holds that $\sum_{i = 0}^K w_i F_i(x^*) \leq \beta \cdot \sum_{i = 0}^K w_i F_i(x)$ if and only if
	$t \cdot \sum_{i = 0}^K w_i F_i(x^*) \leq t \cdot  \beta \cdot \sum_{i = 0}^K w_i F_i(x).$ 
\end{observation}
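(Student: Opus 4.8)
The plan is to recognize that this is an immediate consequence of the fact that multiplication by a fixed positive scalar is an order-preserving bijection on $\R$. Write $A \colonequals \sum_{i=0}^K w_i F_i(x^*)$ and $B \colonequals \sum_{i=0}^K w_i F_i(x)$, so that the two inequalities in question are $A \leq \beta B$ and $tA \leq t\beta B$. For the ``only if'' direction I would assume $A \leq \beta B$ and multiply both sides by $t$; since $t > 0$, the order of the inequality is preserved and one obtains $tA \leq t\beta B$, which is exactly the second inequality. For the ``if'' direction I would start from $tA \leq t\beta B$ and multiply both sides by $\tfrac{1}{t}$, which is again positive, recovering $A \leq \beta B$. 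Each direction is thus a single line once the elementary scalar-monotonicity property of the ordered field $\R$ is stated.

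I do not anticipate any real obstacle here: the statement uses nothing about the specific structure of the $F_i$, about nonnegativity of the weights~$w$, or about the assumptions of the paper; only $t > 0$ and $\beta \geq 1$ (the latter is not even needed for the equivalence, but is carried along to match the definition of $\beta$-approximation) are relevant. The one thing worth making explicit is that the step ``multiply an inequality by a positive number'' is precisely the property being invoked, so that the reader sees why $t > 0$ (rather than merely $t \neq 0$) is required. This observation will subsequently be used to normalize weight vectors without affecting the set of $\beta$-approximate solutions.
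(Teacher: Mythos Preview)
Your proposal is correct; the paper itself gives no proof at all for this observation, treating the equivalence as self-evident. Your one-line argument via the order-preserving property of multiplication by a positive scalar is exactly the intended justification.
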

The conclusion of this observation is twofold: On the one hand, any $\beta$-approximation set for the augmented multi-parametric problem~\eqref{eq:augmentedmultiparam} is also a $\beta$-approximation set for~$\Pi$. On the other hand, restricting the weight set to the bounded $K$-dimensional simplex $W_1 \colonequals \{w \in \R^{K+1}_\geqq: \sum_{i = 0}^K w_i= 1\}$ again yields an equivalent problem. 

\bigskip

The compact set $\Lapprox \subseteq \Lambda$ satisfying Property~\ref{property:LApprox} can now be derived as follows: For $\beta \geq 1$ and $0 < \varepsilon' < 1$, a closed cone $\Wapprox \subseteq \R^{K+1}_{\geqq}$ is constructed such that any $\beta$-approximation set for $\Wapprox$ is a $(\beta + \varepsilon')$-approximation set for $\R^{K+1}_\geqq$, $W_1$, and~$\Pi$. 
Then, by Observation~\ref{lem:multapproxinvariant}, 
a $\beta$-approximation set for the intersection $\Wapproxone \colonequals \Wapprox \cap W_1$ is also a $(\beta + \varepsilon')$-approximation set for $\R^{K+1}_\geqq$, $W_1$, and~$\Pi$. Since~$W_1$ is compact, any closed subset of~$W_1$ is also compact. Thus, denoting the Minkowski sum of two sets~$A,B \subseteq \Lambda$ of parameter vectors by~$A+B$, the (continuous) function
\begin{align*}
\phi: W_1 \cap \{w:w_0 >0\} \rightarrow \Lambda, \quad (w_0,w_1, \dots, w_K) \mapsto \left(\frac{w_1}{w_0} + \lmin_1, \dots, \frac{w_K}{w_0} + \lmin_K\right)
\end{align*}
can be defined to obtain a compact subset~$\Lapprox \colonequals \phi(\Wapproxone)$ of~$\Lambda$. By choosing $\varepsilon' = \frac{\varepsilon}{2}$ and $\beta = (1 + \varepsilon')$, a compact subset $\Lapprox$ that satisfies Property~\ref{property:LApprox} is obtained.\footnote{Note that $\Lapprox$ could also be defined by means of the intersection $\Wapprox \cap \{w \in \R^{K+1}_\geq: w_0 = 1\}$. However, structural insights into the geometry of~$\Lapprox$ would then be missed. Moreover, the presented construction allows to easily derive lower and upper bounds on~$\Lapprox$, which are necessary for proving the polynomial bound on the cardinality of the grid~$\Grid$.}

\bigskip

The next results formalize this outline. Initially, an auxiliary result about convexity and approximation is given: For $\gamma \geq 1$, if a solution~$x$ is a $\gamma$-approximation for several weights $w^1, \dots, w^L \in \R^{K +1}_\geqq$, the same solution~$x$ is a $\gamma$-approximation for any weight in their convex hull.
\begin{lemma}\label{prop:alphaconvex}
	Let $\gamma \geq 1$ and a subset $W' \subseteq \R^{K+1}_\geqq$ be given. Then, any $\gamma$-approximation~$x \in X$ for~$W'$ is also a $\gamma$-approximation for the convex hull~$\conv(W')$.
\end{lemma}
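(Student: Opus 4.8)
The plan is to fix a $\gamma$-approximation $x \in X$ for $W'$, take an arbitrary weight $w \in \conv(W')$, and show $x$ is a $\gamma$-approximation for $w$ as well. Writing $w = \sum_{j=1}^L \mu_j w^j$ with $w^j \in W'$, $\mu_j \geq 0$, and $\sum_j \mu_j = 1$, the key point is that the objective function value of any fixed solution is linear in the weight vector: for every $y \in X$ we have $\sum_{i=0}^K w_i F_i(y) = \sum_{j=1}^L \mu_j \left( \sum_{i=0}^K w^j_i F_i(y) \right)$. So first I would establish this linearity identity for both $x$ and an arbitrary competitor $x' \in X$.

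Next I would use the defining inequality of $\beta$-approximation (Definition~\ref{def:approximation}, applied to the augmented problem): since $x$ is a $\gamma$-approximation for each $w^j$, we have $\sum_{i=0}^K w^j_i F_i(x) \leq \gamma \cdot \sum_{i=0}^K w^j_i F_i(x')$ for every $x' \in X$ and every $j$. Multiplying the $j$-th inequality by $\mu_j \geq 0$ and summing over $j$ — which preserves the inequalities because the multipliers are nonnegative — gives $\sum_j \mu_j \sum_i w^j_i F_i(x) \leq \gamma \cdot \sum_j \mu_j \sum_i w^j_i F_i(x')$. By the linearity identity, the left-hand side is exactly $\sum_i w_i F_i(x)$ and the right-hand side is $\gamma \cdot \sum_i w_i F_i(x')$, so $x$ is a $\gamma$-approximation for $w$. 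Since $x' \in X$ and $w \in \conv(W')$ were arbitrary, this proves the claim.

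One technical caveat I would address: the approximation inequality in Definition~\ref{def:approximation} is stated under the hypothesis that the optimal value is nonnegative, and the augmented objective is $\sum_i w_i F_i(\cdot)$ with $w \in \R^{K+1}_\geqq$ and $F_i(\cdot) \geq 0$ (by Assumption~\ref{ass:poly}~\ref{ass:polycomputablebounds}, since $F_0 = f(\cdot,\lmin)$ and $F_k = b_k$ are all nonnegative), so all these objective values are genuinely nonnegative and the approximation notion is well-defined at every weight in $\R^{K+1}_\geqq$, in particular at every $w^j$ and at $w$. This makes the multiplication-and-summation step legitimate. I do not anticipate a real obstacle here — the whole argument is essentially the observation that a fixed solution's cost is an affine (indeed linear) function of the weight, so the set of weights for which $x$ beats a given competitor $x'$ by a factor $\gamma$ is a half-space through the origin, hence convex, and $W'$ lies in the intersection of these half-spaces over all $x'$. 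The only thing to be careful about is not to divide by anything (which could fail when some objective value is zero) and to keep all manipulations in the form of inequalities between nonnegative quantities scaled by nonnegative multipliers.
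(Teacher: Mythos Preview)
Your argument is correct and is essentially the same as the paper's: write $w=\sum_l \theta_l w^l$, use linearity of $\sum_i w_i F_i(\cdot)$ in the weight, apply the $\gamma$-approximation inequality at each $w^l$, and sum with the nonnegative coefficients $\theta_l$. Your additional remarks on nonnegativity and the half-space interpretation are sound but not needed for the proof.
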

\begin{proof}
Let~$w$ be some weight in the convex hull of~$W'$. Then, $w = \sum_{l=1}^L \theta_l w^l$ for some $L \in \N$, $w^1, \dots, w^L \in W'$, and $\theta_1, \dots, \theta_L \in [0,1]$ with $\sum_{l=1}^L \theta_l = 1$. Thus, for any $x ' \in X$,
\begin{align*}
 \sum_{i = 0}^K w_i F_i(x) &= \sum_{i = 0}^K \sum_{l=1}^L \theta_l \cdot w^l_i F_i(x) = \sum_{l=1}^L \theta_l \cdot \sum_{i = 0}^K w^l_i F_i(x) \\
&\leq \sum_{l=1}^L \theta_l \cdot \gamma \cdot \sum_{i = 0}^K w^l_i F_i(x')  = \gamma \cdot \sum_{i = 0}^K w^l_i F_i(x'),
\end{align*}  
which implies that $x$ is a $\gamma$-approximation for $w$.
\end{proof}

The next results establish the compact set $\Lapprox \subseteq \Lambda$ of parameter vectors such that any $\beta$-approximation set for $\Lapprox$ is a $(\beta + \varepsilon')$-approximation set for $\Lambda$.

Let~$w$ be a strictly positive weight whose components~$w_i$ for~$i$ in some index set $\emptyset \neq I \subseteq \{0,\dots, K\}$ sum up to a small threshold. The next proposition states that, instead of computing an approximate solution for~$w$, one can compute an approximate solution for the weight obtained by projecting all component~$w_i$, $i \in I$, to zero, and still obtain a \enquote*{sufficiently good} approximation guarantee for~$w$.

To this end, for a set~$I \subseteq \{0, \dots, K\}$ of parameter indices, the projection~$\proj^I:\R^{K+1} \rightarrow \R^{K+1}$ that maps all components~$w_i$ of a vector~$w \in \R^{K+1}$ with indices~$i\in I$ to zero is defined by 
\begin{align*}
\proj^{I}_i (w) \colonequals \begin{cases}
0, &\text { if } i \in I,\\
w_i, &\text { else}.\end{cases} 
\end{align*}
\begin{lemma}\label{prop:ApproximationBoundaryAugmenetedProblem}
	Let $0 < \varepsilon' < 1$ and $\beta \geq 1$. Further, let  $\emptyset \neq I \subsetneq \{0,\dots,K\}$ be an index set and let $w \in \R^{K+1}_\geqq$ be a weight for which
	\begin{align*}
	\sum_{i \in I} w_i = \frac{\varepsilon' \cdot \LB}{\beta \cdot \UB} \cdot \min_{j \notin I} w_j.
	\end{align*}
	Then, any $\beta$-approximation for $w$ is a $(\beta + \varepsilon')$-approximation for $\proj^{I}(w)$.
\end{lemma}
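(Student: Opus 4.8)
The plan is to fix an arbitrary $\beta$-approximation $x \in X$ for $w$ and prove directly that $\sum_{i \notin I} w_i F_i(x) \le (\beta + \varepsilon') \sum_{i \notin I} w_i F_i(x')$ for every $x' \in X$; since $\sum_{i=0}^K \proj^I(w)_i F_i(y) = \sum_{i \notin I} w_i F_i(y)$ for all $y \in X$ (the components indexed by $I$ are zeroed out), this is exactly the assertion that $x$ is a $(\beta + \varepsilon')$-approximation for $\proj^I(w)$. The degenerate case $\sum_{i \in I} w_i = 0$ is trivial, as then $\proj^I(w) = w$; by the hypothesis on $\sum_{i\in I} w_i$ this happens precisely when $\min_{j \notin I} w_j = 0$, so from now on one may assume $w_j > 0$ for all $j \notin I$ (note also that the augmented optimal value of any weight is automatically nonnegative since $w \ge 0$ and $F_i \ge 0$, so the notion of approximation is well defined).

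Next I would collect the two elementary ingredients. First, $x$ being a $\beta$-approximation for $w$ gives, for any $x'$,
$\sum_{i \notin I} w_i F_i(x) \le \sum_{i=0}^K w_i F_i(x) \le \beta \sum_{i=0}^K w_i F_i(x') = \beta \sum_{i \notin I} w_i F_i(x') + \beta \sum_{i \in I} w_i F_i(x')$.
Second, since $F_i(x') \le \UB$ for every $i$, the hypothesis $\sum_{i \in I} w_i = \frac{\varepsilon' \LB}{\beta \UB}\min_{j \notin I} w_j$ yields $\sum_{i \in I} w_i F_i(x') \le \UB \sum_{i \in I} w_i = \frac{\varepsilon' \LB}{\beta}\min_{j \notin I} w_j$. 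Hence the whole task reduces to absorbing the error term $\beta \sum_{i \in I} w_i F_i(x') \le \varepsilon' \LB \min_{j \notin I} w_j$ into $\varepsilon' \sum_{i \notin I} w_i F_i(x')$. This succeeds cleanly whenever $\sum_{i \notin I} w_i F_i(x') \ge \LB \min_{j \notin I} w_j$, which holds as soon as $F_j(x') \neq 0$ for some $j \notin I$: then $F_j(x') \ge \LB$ by the gap structure $F_j(x') \in \{0\} \cup [\LB,\UB]$ and $w_j \ge \min_{j' \notin I} w_{j'}$, so $\sum_{i \notin I} w_i F_i(x') \ge w_j F_j(x') \ge \LB \min_{j' \notin I} w_{j'}$, and the two displayed bounds combine to give $\sum_{i \notin I} w_i F_i(x) \le (\beta+\varepsilon')\sum_{i \notin I} w_i F_i(x')$.

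The only genuinely delicate case is $F_j(x') = 0$ for all $j \notin I$, i.e.\ $\sum_{i \notin I} w_i F_i(x') = 0$, where the target inequality forces $\sum_{i \notin I} w_i F_i(x) = 0$. I would establish this by contradiction: if $F_i(x) > 0$ for some $i \notin I$, then $\sum_{i' \notin I} w_{i'} F_{i'}(x) \ge \LB \min_{j \notin I} w_j > 0$; on the other hand, since now $\sum_{i'=0}^K w_{i'} F_{i'}(x') = \sum_{i' \in I} w_{i'} F_{i'}(x')$, the $\beta$-approximation inequality bounds the same quantity by $\beta \cdot \frac{\varepsilon' \LB}{\beta}\min_{j \notin I} w_j = \varepsilon' \LB \min_{j \notin I} w_j$, contradicting $\varepsilon' < 1$. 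Thus $\sum_{i \notin I} w_i F_i(x) = 0 = (\beta+\varepsilon')\sum_{i \notin I} w_i F_i(x')$. Combining the two cases over all $x' \in X$ yields the lemma. The main obstacle is exactly recognizing and dispatching this zero-denominator case — everything else is the routine estimate above — and it is precisely there that the gap $\{0\} \cup [\LB,\UB]$ and the strict bound $\varepsilon' < 1$ enter.
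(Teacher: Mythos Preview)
Your proposal is correct and follows essentially the same approach as the paper: both start from the $\beta$-approximation inequality for $w$, bound the error term $\beta\sum_{i\in I} w_i F_i(x')$ by $\varepsilon'\,\LB\,\min_{j\notin I} w_j$ via $F_i(x')\le\UB$ and the hypothesis on $\sum_{i\in I} w_i$, and then split into the cases $\sum_{j\notin I} w_j F_j(x')>0$ (where the gap $F_j\in\{0\}\cup[\LB,\UB]$ gives the lower bound $\LB\min_{j\notin I} w_j$) and $\sum_{j\notin I} w_j F_j(x')=0$ (where $\varepsilon'<1$ forces $\sum_{j\notin I} w_j F_j(x)=0$). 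Your explicit disposal of the degenerate case $\min_{j\notin I} w_j=0$ up front is a small clarification the paper leaves implicit, but otherwise the arguments coincide.
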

\begin{proof}	
Let $x \in X$ be a $\beta$-approximate solution for $w$. We have to show that, for any~$x' \in X$,
\begin{align*}
\sum_{j \notin I} w_j F_j(x) \leq (\beta + \varepsilon') \cdot \sum_{j \notin I} w_j F_j(x').
\end{align*}
Since~$x$ is a $\beta$-approximation for~$w$, we know that, for any solution~$x' \in X$,
\begin{align*}
	\sum_{i \in I} w_i F_i(x) + \sum_{j \notin I} w_j F_j(x) &\leq \beta \cdot \left( \sum_{i \in I} w_i F_i(x') + \sum_{j \notin I} w_j F_j(x') \right),
\end{align*}
which implies that
\begin{align*}
&&\sum_{j \notin I} w_j F_j(x) - \beta \sum_{j \notin I} w_j F_j(x') &\leq \sum_{i \in I} w_i \cdot \left( \beta \cdot F_i(x') - F_i(x) \right) \\
&& &\leq \sum_{i \in I} w_i \cdot \beta \cdot \UB 
= \varepsilon' \cdot \LB \cdot \min_{j \notin I} w_j.	
\end{align*}
Note that, for any solution $x'' \in X$, it holds that
\begin{align*}
	\sum_{j \notin I} w_j F_j(x'') \in \{0\} \cup \left[   \LB \cdot \min_{j \notin I}  w_j, \sum_{j \notin I} w_j \cdot \UB \right].
\end{align*}
If $\sum_{j \notin I} w_j F_j(x') = 0$, it holds that
	\begin{align*}
		\sum_{j \notin I} w_j F_j(x) < \LB \cdot \min_{i \notin I} w_i
	\end{align*} 
	and, therefore, $\sum_{j \notin I} w_j F_j(x) = 0$. Hence, in this case, we have
\begin{align*}
	\sum_{j \notin I} w_j F_j(x) = 0 = (\beta + \varepsilon') \cdot  \sum_{j \notin I} w_j F_j(x')
\end{align*} 
If $\sum_{j \notin I} w_j F_j(x') \geq \LB \cdot \min_{j \notin I} w_j$, it holds that
\begin{align*}
	\sum_{j \notin I} w_j F_j(x)
	&\leq \varepsilon' \cdot \LB \cdot \min_{j \notin I} w_j + \beta \sum_{j \notin I} w_j F_j(x')\\
	&\leq \varepsilon' \sum_{j \notin I} w_j F_j(x') + \beta \sum_{j \notin I} w_j F_j(x') = (\beta + \varepsilon') \cdot \sum_{j \notin I} w_j F_j(x'),
\end{align*} 
which proves the claim.  
\end{proof}
Let $w \in \R^{K+1}_\geqq$ and $\emptyset \neq I \subsetneq \{0,\dots,K\}$ be given as in Lemma~\ref{prop:ApproximationBoundaryAugmenetedProblem}. By the convexity property from Lemma~\ref{prop:alphaconvex}, every $\beta$-approximation for $w$ is not only a $(\beta + \varepsilon')$-approximation for $\proj^{I}(w)$, but also a $(\beta + \varepsilon')$-approximation for all weights in $\conv(\{w,\proj^{I}(w)\})$. This suggests the following definition:
\begin{definition}\label{def:sets}
Given $0 <  \varepsilon' < 1$ and $\beta \geq 1$, the threshold used in the proof of Lemma~\ref{prop:ApproximationBoundaryAugmenetedProblem} is denoted by
\begin{align}\label{eq:c}
	c \colonequals \frac{\varepsilon' \cdot \LB}{\beta \cdot \UB} \in (0,1).
\end{align}
	Additionally, for any index set $\emptyset \neq I \subsetneq \{0,\dots,K\}$, we define
\begin{align}\label{eq:Psets}
	P_{<} (I) \colonequals \left\{ w \in \R^{K+1}_\geqq: \sum_{i \in I} w_i < c \cdot w_j \text{ for all } j \notin I \right\}.
\end{align}
The set~$P_\leq (I)$ is defined analogously by replacing ``$<$'' by ``$\leq$'' in~\eqref{eq:Psets}. Note that~$P_\leq(I)$ is a polyhedron. Moreover, we define
\begin{align*}
	P_= (I) \colonequals P_\leq(I) \setminus P_<(I)
\end{align*} and, finally,
\begin{align*}
\Wapprox \colonequals \R^{K+1}_\geqq \setminus \left( \bigcup\limits_{\emptyset \neq I \subsetneq \{0,\dots,K\}}   P_<(I) \right).
\end{align*}
\end{definition}
\noindent
Figure~\ref{fig:Wapprox} provides a visualization of the sets defined in Definition~\ref{def:sets}.
\begin{figure}[H]	
\begin{subfigure}{0.49\textwidth}
\centering
	\begin{tikzpicture}[scale = 0.27,x  = {(-0.5cm,-0.5cm)},
	y  = {(0.9659cm,-0.25882cm)},
	z  = {(0cm,1cm)}]
	
	\def\a{10}
	\def\eps{2}
	\def\opac{0.5}
	\def\color{black!50}
	
	\draw[thick,->] (0,0,0) -- (11,0,0) node[label=below left:$w_0$] {};
	\draw[thick,->] (0,0,0) -- (0,11,0) node[label=below right:$w_1$] {};
	\draw[thick,->] (0,0,0) -- (0,0,11) node[label=$w_2$] {};
	
	\draw[dashed, fill =\color, opacity =\opac] (0,0,0) -- (\a,\a,\eps) -- (\a,0,0) -- cycle;
	\draw[dashed, fill = \color, opacity =\opac] (0,0,0) -- (\a,\a,\eps) -- (0,\a,0) -- cycle;
	\draw[\color]  (\a,0,0) --  (\a,\a,\eps) -- (0,\a,0) ;
	
	\draw[dotted] (\a,\a,\a) -- (\a,\a,0);
	\draw[dotted] (\a,0,0) -- (\a,\a,0) -- (0,\a,0);
	
	\draw[dotted] (\a,\a,\a) -- (\a,0,\a);
	\draw[dotted] (\a,0,0) -- (\a,0,\a) -- (0,0,\a);
	
	\draw[dotted] (\a,\a,\a) -- (0,\a,\a);
	\draw[dotted] (0,\a,0) -- (0,\a,\a) -- (0,0,\a);
	
	\draw[\color,fill =\color, opacity =\opac] (0,0,0) -- (0.5*\eps,0,\a) -- (0,0.5*\eps,\a) -- cycle;	
	
	\draw[thin]  (8,1,2.5) node[label={below right:$\scriptstyle P_=(\{2\})$}] {}; 
	\draw[thin]  (0.5,0.5,9.5) --  (3,3,9.5) node[label={below right:$\scriptstyle P_=(\{0,1\})$}] {};

	\end{tikzpicture}
\end{subfigure}	
\begin{subfigure}{0.49\textwidth}
	\centering
	\begin{tikzpicture}[scale = 0.27,x  = {(-0.5cm,-0.5cm)},
	y  = {(0.9659cm,-0.25882cm)},
	z  = {(0cm,1cm)}]
	
	\def\a{10}
	\def\eps{2}
	\def\opac{0.5}
	\def\color{black!50}
	
	\draw[thick,->] (0,0,0) -- (11,0,0) node[label=below left:$w_0$] {};
	\draw[thick,->] (0,0,0) -- (0,11,0) node[label=below right:$w_1$] {};
	\draw[thick,->] (0,0,0) -- (0,0,11) node[label=$w_2$] {};
	
	\draw[dotted] (\a,\a,\a) -- (\a,\a,0);
	\draw[dotted] (\a,0,0) -- (\a,\a,0) -- (0,\a,0);
	
	\draw[dotted] (\a,\a,\a) -- (\a,0,\a);
	\draw[dotted] (\a,0,0) -- (\a,0,\a) -- (0,0,\a);
	
	\draw[dotted] (\a,\a,\a) -- (0,\a,\a);
	\draw[dotted] (0,\a,0) -- (0,\a,\a) -- (0,0,\a);
	
	\draw[dashed, fill =\color, opacity =\opac] (0,0,0) -- (\a,\a,\eps) -- (\a,0,0) -- cycle;
	\draw[dashed, fill = \color, opacity =\opac] (0,0,0) -- (\a,\a,\eps) -- (0,\a,0) -- cycle;
	\draw[\color]  (\a,0,0) --  (\a,\a,\eps) -- (0,\a,0) ;
	
	\draw[\color, fill =\color, opacity =\opac] (\a,\a,\eps) -- (\a,\a,0) -- (\a,0,0) -- cycle;
	\draw[\color, fill =\color, opacity =\opac] (\a,\a,\eps) -- (\a,\a,0) -- (0,\a,0) -- cycle;
	
	\draw[\color,fill =\color, opacity =\opac] (0,0,0) -- (0.5*\eps,0,\a) -- (0,0.5*\eps,\a) -- cycle;	
	\draw[\color, fill =\color, opacity =\opac] (0,0.5*\eps,\a) -- (0,0,\a) -- (0.5*\eps,0,\a) -- cycle;
	
	\draw[thin]  (8,1,2.5) node[label={below right:$\scriptstyle P_\leq(\{2\})$}] {}; 
	\draw[thin]  (0.5,0.5,9.5) --  (3,3,9.5) node[label={below right:$\scriptstyle P_\leq(\{0,1\})$}] {};
	\end{tikzpicture}
\end{subfigure}	
\begin{subfigure}{\textwidth}
	\centering
	\begin{tikzpicture}[scale = 0.27,x  = {(-0.5cm,-0.5cm)},
	y  = {(0.9659cm,-0.25882cm)},
	z  = {(0cm,1cm)}]
	
	\def\a{10}
	\def\eps{2}
	\def\opac{0.3}
	\def\color{black!50}
	
	\draw[thick,->] (0,0,0) -- (11,0,0) node[label=below left:$w_0$] {};
	\draw[thick,->] (0,0,0) -- (0,11,0) node[label=below right:$w_1$] {};
	\draw[thick,->] (0,0,0) -- (0,0,11) node[label=$w_2$] {};
	
	\draw[dashed, fill =\color, opacity =\opac] (0,0,0) -- (\a,\a,\eps) -- (\a,0,0) -- cycle;
	\draw[dashed, fill = \color, opacity =\opac] (0,0,0) -- (\a,\a,\eps) -- (0,\a,0) -- cycle;
	\draw[\color]  (\a,0,0) --  (\a,\a,\eps) -- (0,\a,0) ;
	\draw[\color, fill =\color, opacity =\opac] (\a,\a,\eps) -- (\a,\a,0) -- (\a,0,0) -- cycle;
	\draw[\color, fill =\color, opacity =\opac] (\a,\a,\eps) -- (\a,\a,0) -- (0,\a,0) -- cycle;
	
	\draw[dashed,\color,fill = \color, opacity =\opac] (0,0,0) -- (\a,\eps,\a) -- (\a,0,0) -- cycle;
	\draw[dashed,\color,fill = \color, opacity =\opac] (0,0,0) -- (\a,\eps,\a) -- (0,0,\a) -- cycle;
	\draw[\color]  (\a,0,0) --  (\a,\eps,\a) -- (0,0,\a) ;
	\draw[\color, fill =\color, opacity =\opac] (\a,\eps,\a) -- (\a,0,\a) -- (\a,0,0) -- cycle;
	\draw[\color, fill =\color, opacity =\opac] (\a,\eps,\a) -- (\a,0,\a) -- (0,0,\a) -- cycle;
	
	\draw[dashed,\color,fill = \color, opacity =\opac] (0,0,0) -- (\eps,\a,\a) -- (0,0,\a) -- cycle;
	\draw[dashed,\color,fill = \color, opacity =\opac] (0,0,0) -- (\eps,\a,\a) -- (0,\a,0) -- cycle;
	\draw[\color] (0,0,\a) -- (\eps,\a,\a) -- (0,\a,0);
	\draw[\color, fill =\color, opacity =\opac] (\eps,\a,\a) -- (0,\a,\a) -- (0,0,\a) -- cycle;
	\draw[\color, fill =\color, opacity =\opac] (\eps,\a,\a) -- (0,\a,\a) -- (0,\a,0) -- cycle;

%
%

	\draw[\color, fill =\color, opacity =\opac] (0,0,0) -- (\a,0,0.5*\eps) -- (\a,0.5*\eps,0) -- cycle;	
	\draw[\color, fill =\color, opacity =\opac] (\a,0.5*\eps,0) -- (\a,0,0) -- (\a,0,0.5*\eps) -- cycle;
	\draw[\color, fill =\color, opacity =\opac] (0,0,0) -- (0,\a,0.5*\eps) -- (0.5*\eps,\a,0) -- cycle;
	\draw[\color, fill =\color, opacity =\opac] (0,\a,0.5*\eps) -- (0,\a,0) -- (0.5*\eps,\a,0) -- cycle;
%
	
	\draw[\color, fill =\color, opacity =\opac] (0,0,0) -- (0.5*\eps,0,\a) -- (0,0.5*\eps,\a) -- cycle;	
	\draw[\color, fill =\color, opacity =\opac] (0,0.5*\eps,\a) -- (0,0,\a) -- (0.5*\eps,0,\a) -- cycle;

	%
	
	
	
	\draw[thin]  (0.1,9.9,0.3) --  (1,12,2) node[label={right:$\scriptstyle P_\leq(\{0,2\})$}] {};
	\draw[thin]  (9.85,0.3,0.1) --  (12,-1.3,2) node[label={left:$\scriptstyle P_\leq(\{1,2\})$}] {};
	\draw[thin]  (0.05,0.3,9.85) --  (3,-6,8) node[label={left:$\scriptstyle P_\leq(\{0,1\})$}] {};

	\draw[thin]  (8,10,1.3) --  (4,13.6,-1) node[label={right:$\scriptstyle P_\leq(\{2\})$}] {}; 
	\draw[thin]  (0.25,9.5,9) --  (0,12,10) node[label={right:$\scriptstyle P_\leq(\{0\})$}] {};
	\draw[thin]  (9.5,0.5,8.5) --  (10,-3,9) node[label={left:$\scriptstyle P_\leq(\{1\})$}] {};
	\end{tikzpicture}
\end{subfigure}	
	\caption{Illustration of the sets~$P_=(I)$, $P_\leq(I)$, and~$\Wapprox$ for a linear multi-parametric problem with~$K =2$.
		Top left: Visualization of $P_{=}(\{0,1\})$ and $P_{=}(\{2\})$.
		Top right: Visualization of the corresponding full-dimensional sets $P_\leq(\{0,1\})$ and $P_\leq(\{2\})$.
		Bottom:	Visualization of all sets $P_{\leq}(I)$. The set~$\Wapprox$ is the complement of the union of the sets $P_{<}(I) = P_{\leq}(I) \setminus P_{=}(I)$ for $\emptyset \neq I \subsetneq \{0,\dots, K\}$. Note that none of the visualized sets are bounded from above.}
	\label{fig:Wapprox}
\end{figure}

Let $\bar{w} \in \Wapprox$ such that $\bar{w} \in P_=(I)$ for some index set $\emptyset \neq I \subsetneq \{0,\dots,K\}$. Lemma~\ref{prop:ApproximationBoundaryAugmenetedProblem} implies that a $\beta$-approximation for $\bar{w}$ is a $(\beta + \varepsilon')$-approximation for $\conv( \bar{w}, \proj^I(\bar{w}))$. However, the reverse statement is needed: For $w \in \R^{K+1}_\geqq \setminus \Wapprox$, does there exist a weight $\bar{w} \in \Wapprox$ such that a $\beta$-approximation for $\bar{w}$ is a $(\beta + \varepsilon')$-approximation for $w$? Proposition~\ref{prop:WapproxForAugmentedProblem} will show that this holds true. In fact, the corresponding proof is constructive and relies on the lifting procedure described in the following.
 
\bigskip
 
Consider some weight $w \in \R^{K+1}_\geqq \setminus \Wapprox$, i.e., $w \in P_\leq(I)$ for some index set $\emptyset \neq I \subsetneq \{0,\dots,K\}$. Instead of computing an approximate solution for $w$, a $\beta$-approximation for the corresponding \emph{lifted weight}~$\bar{w} \in P_=(I)$ (satisfying $w \in \conv(\{\bar{w}, \proj^I(\bar{w})\})$) can be computed, which is then a $(\beta + \varepsilon')$-approximation for $w$. The next lemma formalizes the lifting.

\begin{lemma}\label{lem:Lifting}
	Let $\emptyset \neq I \subsetneq \{0,\dots,K\}$ be an index set and let $w \in P_<(I)$. Define
	\begin{align*}
	\bar{w}_i \colonequals 
	\begin{cases}
		\frac{w_i}{\sum_{j \in I} w_j} \cdot c \cdot \min_{j \notin I} w_j, &\text{ if } i \in I \text{ and } \sum_{j \in I} w_j >0,\\
		\frac{1}{|I|} \cdot c \cdot \min_{j \notin I} w_j, &\text{ if } i \in I \text{ and } w_j =0 \text{ for all } j \in I,\\
		w_i, &\text{ if } i \notin I.
	\end{cases}
	\end{align*}
	Then, $\bar{w} \in P_=(I)$ and $w \in \conv(\{\bar{w}, \proj^I(\bar{w})\})$. In particular, $\bar{w}_i \geq w_i$ for all $i \in I$.
\end{lemma}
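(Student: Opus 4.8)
The plan is to verify the three claims --- $\bar w\in P_=(I)$, $w\in\conv(\{\bar w,\proj^I(\bar w)\})$, and $\bar w_i\ge w_i$ for $i\in I$ --- by direct computation, after first extracting the one structural consequence of the hypothesis $w\in P_<(I)$ that makes everything work. Since $\sum_{i\in I}w_i\ge 0$ and $c>0$, the strict inequality $\sum_{i\in I}w_i<c\cdot w_j$ forces $w_j>0$ for every $j\notin I$; hence $m\colonequals\min_{j\notin I}w_j>0$, so $\bar w$ is well-defined, and since all $w_i\ge 0$ we also get $\bar w\in\R^{K+1}_\geqq$.

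The first step is the elementary observation that in \emph{both} branches of the definition one has $\sum_{i\in I}\bar w_i=c\cdot m$: in the first branch the factors $\tfrac{w_i}{\sum_{j\in I}w_j}$ over $i\in I$ sum to $1$, and in the second branch the $|I|$ equal terms $\tfrac{1}{|I|}\,c\,m$ sum to $c\,m$. Since $\bar w_j=w_j$ for all $j\notin I$, this gives $\sum_{i\in I}\bar w_i=c\,m\le c\,\bar w_j$ for every $j\notin I$, so $\bar w\in P_\le(I)$; and for an index $j^\ast\notin I$ attaining the minimum, equality $\sum_{i\in I}\bar w_i=c\,\bar w_{j^\ast}$ holds, so $\bar w\notin P_<(I)$, i.e.\ $\bar w\in P_=(I)$.

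Next I would check $\bar w_i\ge w_i$ for $i\in I$: in the second branch $w_i=0\le\bar w_i$ trivially, and in the first branch $w\in P_<(I)$ gives $\sum_{j\in I}w_j<c\,m$, so the common scaling factor $\tfrac{c\,m}{\sum_{j\in I}w_j}$ is at least $1$ and thus $\bar w_i=w_i\cdot\tfrac{c\,m}{\sum_{j\in I}w_j}\ge w_i$ (using $w_i\ge 0$). For the convex-combination claim, I would use that $\bar w$ and $\proj^I(\bar w)$ both agree with $w$ on every coordinate $j\notin I$, so only the coordinates $i\in I$ need to be matched by a single convex multiplier $\theta$ (the weight on $\bar w$, with $\proj^I(\bar w)$ contributing $0$ on those coordinates). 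If $w_j=0$ for all $j\in I$, then $w=\proj^I(\bar w)$ and the claim is immediate; otherwise set $\theta\colonequals\tfrac{\sum_{j\in I}w_j}{c\,m}$, which lies in $(0,1)$ --- the bound $\theta<1$ being precisely $w\in P_<(I)$ --- and verify $\theta\,\bar w_i=w_i$ for $i\in I$ (immediate from the definition of $\bar w_i$ when $w_i>0$, and with both sides zero when $w_i=0$), so that $w=\theta\,\bar w+(1-\theta)\,\proj^I(\bar w)$.

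I do not expect a genuine obstacle here; the only care needed is the bookkeeping across the two branches of the definition of $\bar w$ together with the sub-case $w_i=0$ for some but not all $i\in I$, and checking that the multiplier $\theta$ lands in $[0,1]$ --- which is exactly where the strictness of the hypothesis $w\in P_<(I)$ (rather than merely $w\in P_\le(I)$) enters.
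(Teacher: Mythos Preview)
Your proposal is correct and follows essentially the same approach as the paper's proof: split into the two cases $\sum_{j\in I}w_j=0$ and $\sum_{j\in I}w_j>0$, compute $\sum_{i\in I}\bar w_i=c\cdot\min_{j\notin I}w_j$ in each, and use the convex multiplier $\theta=\tfrac{\sum_{j\in I}w_j}{c\,m}$ in the second case. If anything, your write-up is more careful than the paper's --- you explicitly check $m>0$ and $\bar w\in\R^{K+1}_\geqq$, spell out the $P_\le\setminus P_<$ verification, and actually prove the ``in particular'' claim $\bar w_i\ge w_i$, which the paper leaves implicit.
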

\begin{proof}
	First consider the case that $w_j = 0$ for all~$j \in I$. \textcolor{blue}{Then} it holds that
	\begin{align*}
	\sum_{i \in I} \bar{w}_i = \sum_{i \in I} \frac{1}{|I|} \cdot c \cdot \min_{j \notin I} w_j = c \cdot \min_{j \notin I} w_j = c \cdot \min_{j \notin I} \bar{w}_j,
	\end{align*}
	which yields that $\bar{w} \in P_=(I)$. Moreover, $w = \proj^I(\bar{w}) \in \conv(\{\bar{w}, \proj^I(\bar{w})\})$.
	Now consider the case that $w_j \neq 0$ for some~$j \in I$. Here, it holds that
	\begin{align*}
		\sum_{i\in I} \bar{w}_i = \sum_{i \in I} \frac{w_i}{\sum_{j \in I} w_j} \cdot c \cdot \min_{j \notin I} w_j = c \cdot \min_{j \notin I} w_j = c \cdot \min_{j \notin I} \bar{w}_j,
	\end{align*}
	which again yields that $\bar{w} \in P_=(I)$. Note that, since $w \in P_<(I)$, we must have $c \cdot \displaystyle\min_{j \notin I} w_j > \sum_{j \in I} w_j \geq 0$. Thus, the weight~$w$ can be written as a convex combination of~$\bar{w}$ and $\proj^I(\bar{w})$ by
	\begin{align*}
	w = \frac{\sum_{j \in I} w_j}{c \cdot \min_{j \notin I} w_j} \cdot \bar{w} + \left( 1 - \frac{\sum_{j \in I} w_j}{c \cdot \min_{j \notin I} w_j} \right) \cdot \proj^I(\bar{w}),
	\end{align*}
	which concludes the proof.
\end{proof}
When given a weight $w \in P_<(I)$ for some index set~$I$, a lifted weight~$\bar{w} \in P_=(I)$ can be constructed using Lemma~\ref{lem:Lifting}. A $\beta$-approximation for $\bar{w}$ is then a $(\beta + \varepsilon')$-approximation for $w$ due to Lemma~\ref{prop:alphaconvex} and Lemma~\ref{prop:ApproximationBoundaryAugmenetedProblem}. Next, it is shown that this idea generalizes to the set~$\Wapprox$ in the following way: For each weight $w \notin \Wapprox$, a weight~$\bar{w} \in \Wapprox$ can be found such that any $\beta$-approximation for~$\bar{w}$ is a $(\beta + \varepsilon')$-approximation for~$w$. The remaining task is to prove that this holds true for weights contained in $P_<(I) \cap P_<(I')$ for two (or more) different index sets~$I$ and~$I'$, since using the previous construction for~$I$ might result in a lifted weight that is still contained in~$P_<(I')$ and vice versa. Notwithstanding, such weights can inductively be lifted with respect to different index sets and, if this is done in a particular order, a weight is obtained that is contained in~$\Wapprox$ after at most~$K$ lifting steps.

The following lemma states that, for a weight~$w$ that is not contained in~$P_<(I)$ for some index set~$I$, increasing any of its components~$w_i$ with indices~$i \in I$ preserves the fact that the weight is not contained in~$P_<(I)$.

\begin{lemma}\label{lem:liftinvariance}
	Let $w \in \R^{K+1}_\geqq \setminus P_<(I)$ for some index set $\emptyset \neq I \subsetneq \{0,\dots,K\}$. Let $\bar{w} \in \R^{K+1}_\geqq$ be a weight such that $\bar{w}_i \geq w_i$ for all $i \in I$ and $\bar{w}_j = w_j$ for all $j \notin I$. Then, $\bar{w} \notin P_<(I)$.
\end{lemma}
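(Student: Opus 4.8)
The statement is the negation of a universally quantified inequality, so the plan is simply to unpack the definition of $P_<(I)$ and push the hypothesis through the monotonicity assumption on $\bar w$.

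First I would recall that, by~\eqref{eq:Psets}, a weight $v \in \R^{K+1}_\geqq$ lies in $P_<(I)$ if and only if $\sum_{i \in I} v_i < c \cdot v_j$ holds for \emph{every} $j \notin I$. Hence the hypothesis $w \notin P_<(I)$ means precisely that this fails for at least one index: there exists $j^\star \notin I$ with $\sum_{i \in I} w_i \geq c \cdot w_{j^\star}$. Note that $j^\star$ exists since $I \subsetneq \{0,\dots,K\}$, so $\{0,\dots,K\}\setminus I \neq \emptyset$.

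Next I would use the two properties of $\bar w$. Since $\bar w_i \geq w_i$ for all $i \in I$, summing over $I$ gives $\sum_{i \in I} \bar w_i \geq \sum_{i \in I} w_i$. Combining this with the witness inequality and the fact that $\bar w_{j^\star} = w_{j^\star}$ (because $j^\star \notin I$), I obtain
\begin{align*}
\sum_{i \in I} \bar w_i \;\geq\; \sum_{i \in I} w_i \;\geq\; c \cdot w_{j^\star} \;=\; c \cdot \bar w_{j^\star}.
\end{align*}
Thus the index $j^\star \notin I$ violates the defining inequality of $P_<(I)$ for $\bar w$ as well, which shows $\bar w \notin P_<(I)$ and completes the argument.

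There is essentially no obstacle here: the only point requiring minor care is the logical step of reading $w \notin P_<(I)$ as the existence of a single violating index $j^\star$, and then observing that the \emph{same} index $j^\star$ continues to work for $\bar w$ because increasing only the components indexed by $I$ can only increase the left-hand side $\sum_{i\in I} \bar w_i$ while leaving the right-hand side $c \cdot \bar w_{j^\star}$ unchanged.
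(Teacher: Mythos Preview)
Your proof is correct and takes essentially the same approach as the paper: the paper simply compresses your witness argument into a single chain $\sum_{i \in I} \bar{w}_i \geq \sum_{i \in I} w_i \geq c \cdot \min_{j \notin I} w_j = c \cdot \min_{j \notin I} \bar{w}_j$, using the equivalent $\min$-formulation of membership in $P_<(I)$ rather than an explicit witness index $j^\star$.
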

\begin{proof}
	Since $w \in P_<(I)$, it holds that
	\begin{align*}
	\sum_{i \in I} \bar{w}_i \geq \sum_{i \in I}w_i \geq c \cdot \min_{j \notin I} w_j = c \cdot \min_{j \notin I} \bar{w}_j,
	\end{align*}
	which proves the claim.
\end{proof}
\noindent
Now, we can prove the central result for Property~\ref{property:LApprox}. Note that the proof is constructive.
\begin{proposition}\label{prop:WapproxForAugmentedProblem}
	Let $0 < \varepsilon' < 1$ and $\beta \geq 1$ be given. Then, for any weight~$w \in \R^{K+1}_\geqq \setminus \Wapprox$, there exists a weight $\bar{w} \in \Wapprox$ such that any $\beta$-approximation for $\bar{w}$ is a $(\beta + \varepsilon')$-approximation for $w$.
\end{proposition}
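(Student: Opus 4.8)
My plan is to build the desired weight $\bar w\in\Wapprox$ by repeatedly applying the lifting map of Lemma~\ref{lem:Lifting}, handling the relevant index sets in order of increasing size, and to arrange that $\bar w$ lies not only in $\Wapprox$ but in fact on the boundary $P_=(I)$ of \emph{every} index set $I$ used along the way. Granting this, the proposition follows cleanly and with only a single $\varepsilon'$ loss: one shows that $w$ lies in $\conv\bigl(\{\bar w\}\cup\{\proj^{I}(\bar w):I\in\mathcal F\}\bigr)$, where $\mathcal F$ is the (small) family of index sets used; for each $I\in\mathcal F$, Lemma~\ref{prop:ApproximationBoundaryAugmenetedProblem} (applicable because $\bar w\in P_=(I)$ gives exactly its hypothesis) shows that any $\beta$-approximation $x$ for $\bar w$ is a $(\beta+\varepsilon')$-approximation for $\proj^{I}(\bar w)$, while $x$ is trivially a $(\beta+\varepsilon')$-approximation for $\bar w$ itself; Lemma~\ref{prop:alphaconvex} then upgrades this to a $(\beta+\varepsilon')$-approximation for the entire convex hull, in particular for $w$.

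The construction rests on a structural observation about $\Wapprox$. Because $c<1$, the relation $w\in P_<(I)$ forces every coordinate $w_i$ with $i\in I$ to be strictly smaller than every coordinate $w_j$ with $j\notin I$; hence the family $\mathcal F(w)\colonequals\{I : w\in P_<(I)\}$ is totally ordered by inclusion, say $I_1\subsetneq I_2\subsetneq\dots\subsetneq I_m$, and since $I_m\subsetneq\{0,\dots,K\}$ we have $m\le K$. Starting from $w$, if $\mathcal F(w)=\emptyset$ we are done; otherwise we lift $w$ with respect to its smallest active set $I_1$ to obtain $w^{(1)}\in P_=(I_1)$ with $w\in\conv\bigl(\{w^{(1)},\proj^{I_1}(w^{(1)})\}\bigr)$. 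Using Lemma~\ref{lem:liftinvariance} together with the ``smallest coordinates'' description, one checks that $\mathcal F(w^{(1)})=\mathcal F(w)\setminus\{I_1\}$: no new set becomes active, since the coordinates in $I_1$ are raised only up to the threshold $c\min_{j\notin I_1}w_j$ and therefore stay strictly below all coordinates outside $I_1$. Iterating this at most $m\le K$ times yields $\bar w\colonequals w^{(m)}\in\Wapprox$ together with the chain $I_1\subsetneq\dots\subsetneq I_m$ of sets used.

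It remains to establish (i) $\bar w\in P_=(I_t)$ for every $t$, and (ii) $w\in\conv\bigl(\{\bar w\}\cup\{\proj^{I_t}(\bar w):1\le t\le m\}\bigr)$. Part (ii) is routine bookkeeping: one composes the convex representations $w^{(t-1)}\in\conv\bigl(\{w^{(t)},\proj^{I_t}(w^{(t)})\}\bigr)$, pushing projections through using linearity of the projection maps and the identity $\proj^{I_s}\circ\proj^{I_t}=\proj^{I_t}$ for $I_s\subseteq I_t$, to rewrite $w$ as a convex combination of $\bar w$ and the $\proj^{I_t}(\bar w)$. Part (i) is the crux, and it is the reason the index sets must be processed from smallest to largest. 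The lift of Lemma~\ref{lem:Lifting} multiplies all coordinates inside the lifted set by one common factor; so when $w^{(t-1)}\in P_=(I_1)\cap\dots\cap P_=(I_{t-1})$ is lifted with respect to $I_t\supseteq I_{t-1}\supseteq\dots\supseteq I_1$, each earlier $I_s$ has all of its coordinates scaled by the same $\kappa_t$, and the minimum of the new weight over $j\notin I_s$ is still attained inside $I_t\setminus I_s$. The equality $\sum_{i\in I_s}(\cdot)_i=c\min_{j\notin I_s}(\cdot)_j$ is then preserved once we verify, after cancelling $\kappa_t$, the inequality $c\cdot\min_{j\in I_t\setminus I_s}w^{(t-1)}_j\le\sum_{j\in I_t}w^{(t-1)}_j$, which holds because $c<1$ and the right-hand sum already contains the minimand on the left. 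I expect exactly this point — that each subsequent lift must respect all previously attained boundary equalities, which only works under the increasing-size ordering — to be the main obstacle; everything else is an assembly of Lemmas~\ref{prop:alphaconvex}, \ref{prop:ApproximationBoundaryAugmenetedProblem}, \ref{lem:Lifting}, and~\ref{lem:liftinvariance}.
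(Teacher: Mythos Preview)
Your approach is correct and essentially identical to the paper's, which also lifts from the smallest active index set upward (after a WLOG coordinate reordering so that the active sets are all of the form $\{0,\dots,k\}$), verifies inductively that the final weight lies in $P_=(I_t)$ for every set used and that $w$ belongs to the convex hull of the final weight and its projections, and concludes via Lemmas~\ref{prop:alphaconvex} and~\ref{prop:ApproximationBoundaryAugmenetedProblem}. One minor overstatement to fix: your equality $\mathcal F(w^{(1)})=\mathcal F(w)\setminus\{I_1\}$ can fail (take $K=2$, $c=\tfrac12$, $w=(0,\,0.49,\,1)$, where $\mathcal F(w)=\{\{0\},\{0,1\}\}$ but after the first lift $w^{(1)}=(0.245,\,0.49,\,1)$ already lies in $\Wapprox$, so $\mathcal F(w^{(1)})=\emptyset$); only the inclusion $\mathcal F(w^{(1)})\subseteq\mathcal F(w)\setminus\{I_1\}$ holds in general, but since that inclusion is all your argument actually uses (it still gives termination and the nesting $I_1\subsetneq\cdots\subsetneq I_m$), nothing breaks.
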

\begin{proof}
	Let $w \in \R^{K+1}_\geqq$. Without loss of generality,  assume that $w_0 \leq w_1 \leq \dots \leq w_K$ holds (otherwise, the ordering of the indices can be changed due to symmetry of $\Wapprox$). First, it is shown that, in this case, all index sets $I$ such that $w \in P_<(I)$ are of the form $I =\{0,\dots,k\}$ for some $k \in \{0,\dots,K-1\}$: Let $i \in I$ and $j \notin I$ for some index set $\emptyset \neq I \subsetneq \{0,\dots,K\}$ for which $w \in P_<(I)$. Then,
	\begin{align*}
	w_i \leq \sum_{i' \in I} w_{i'} < c \cdot \min_{j' \notin I} w_{j'} \leq w_j,
	\end{align*}
	which implies that $i < j$ and, thus, $I = \{0,\dots,k\}$ for some $k \in \{0,\dots,K-1\}$. To shorten the notation, we use the abbreviations $[K] \colonequals \{0, \dots, K\}$ and $[\bar{k}] \colonequals \{0, \dots, \bar{k}\}$ for $\bar{k} \in [K]$ in the remainder of this proof.\\
	
	Since $w \notin \Wapprox$, we have $w \in P_<(I)$ for at least one index set $\emptyset \neq I \subsetneq [K]$. Hence, we choose $k^{\max} \in [K-1]$ to be the largest index such that $w \in P_<([k^{\max}])$ holds. Similarly, choose $k^0 \in [K-1]$ to be the smallest index such that $w \in P_<([k^0])$ holds. This means that $w \notin P_<([k])$ for all $k \in [K-1]$ with $0 \leq k < k^0$ and $k^{\max} < k < K$. Further, set $w^0 \colonequals w$ and construct a (finite) sequence $w^0,w^1, \dots, w^L$ of weights and a corresponding sequence $k^0< k^1 < \dots < k^{L-1}$ of indices such that, for each $\ell \in\{1, \dots, L\}$, the following statements hold:
	\begin{enumerate}[label=(\alph*), ref=(\alph*)]
		\item\label{prop:WapproxForAugmentedProblema} $w^{\ell}_i \geq w^{\ell-1}_i$ for $i=0, \dots, k^{\ell-1}$ and $w^{\ell}_j = w^{\ell-1}_j$ for $j = k^{\ell-1} +1 ,\dots, K$.
			
		\item\label{prop:WapproxForAugmentedProblemb} $0 < w^\ell_0 \leq w^{\ell}_1 \leq \dots \leq w^{\ell}_K$.
		
		\item\label{prop:WapproxForAugmentedProblemc} $w^{\ell -1} \notin P_<([k])$ for $k \in [K-1]$ with $0\leq k < k^{\ell-1}$ or $k^{\max} < k < K$.
		
		\item\label{prop:WapproxForAugmentedProblemd} $w^{\ell} \in P_=([k^{\ell'}])$ for $\ell' = 0,\dots, \ell-1$.
		
		\item\label{prop:WapproxForAugmentedProbleme} $w \in \conv(\{w^{\ell}\} \cup \{\proj^{[k^0]}(w^\ell), \dots, \proj^{[k^{\ell -1}]}(w^{\ell})\})$.
	\end{enumerate}
	The construction, which is illustrated in Figure~\ref{fig:ConstrIndexSetsAbdWeights}, is as follows:
	Given a weight $w^\ell \in \R^{K+1}_\geqq \setminus \Wapprox$ with $w^{\ell}_0 \leq w^\ell_1 \leq \dots \leq w^\ell_K$, we set~$k^{\ell}$ to be the smallest index such that $w^\ell \in P_<([k^\ell])$ and, analogously to Lemma~\ref{lem:Lifting}, define
	\begin{align*}
	w^{\ell +1}_i \colonequals 
	\begin{cases}
	\frac{w^{\ell}_i}{\sum_{j = 0}^{k^\ell} w^\ell_j} \cdot c \cdot w^{\ell}_{k^{\ell} +1} &\text{for } i = 0,\dots, k^{\ell}  \text{ if } \sum_{j=0}^{k^{\ell}} w^{\ell}_j > 0, \\
	\frac{1}{k^{\ell} + 1} \cdot c \cdot w^{\ell}_{k^{\ell} + 1} &\text{for } i = 0,\dots, k^{\ell} \text{ otherwise},\\
	w^{\ell}_i &\text{for } i =k^{\ell} + 1, \dots, K.	\end{cases}
	\end{align*}
	We repeat this construction until, for some~$L \in \N$, the weight~$w^L$ is not contained in~$P_<([k])$ for any $k \in [K-1]$. Note that Statement~\ref{prop:WapproxForAugmentedProblemb} implies that, for any~$\ell \in \N$, the weight~$w^\ell$ cannot be contained in~$P_<(I)$ for any index set~$I$ that is not of the form $I = [k]$ for some $k \in [K-1]$. Moreover, Statement~\ref{prop:WapproxForAugmentedProblemc} implies that $k^{\max} \geq k^\ell$ and that, for $\ell \geq1$ and $k \in[k^{\ell-1} - 1]$, it holds that $\sum_{i = 0}^k w^{\ell -1}_i \geq c \cdot w^{\ell -1}_{k+1}$. Therefore, if $\sum_{j=0}^{k^{\ell-1}} w^{\ell -1}_j >0$, it holds that
	\begin{align*}
	\sum_{i = 0}^k w^\ell_i = \sum_{i = 0}^k \frac{w^{\ell -1}_i}{\sum_{j=0}^{k^{\ell-1}} w^{\ell -1}_j  } \cdot c \cdot w^{\ell -1}_{k^{\ell}} 
	\geq \frac{c \cdot w^{\ell -1}_{k+1}}{\sum_{j=0}^{k^{\ell-1}}  w^{\ell -1}_j  } \cdot c \cdot w^{\ell -1}_{k^{\ell}} = c \cdot w^{\ell}_{k+1}.
	\end{align*}
	 Similarly, if $w^{\ell -1}_j = 0$ for all $j \in [k^{\ell-1}]$, it holds that
	\begin{align*}
	\sum_{i = 0}^k w^\ell_i = \sum_{i = 0}^k \frac{1}{k^{\ell -1} + 1} \cdot c \cdot w^{\ell-1}_{k^{\ell-1}+1} = (k+1) \cdot w^{\ell}_{k+1} > c \cdot w^{\ell}_{k+1}.
	\end{align*}
	Thus, in both cases, we obtain that $w^{\ell}\notin P_<([k])$ for $k = 0,\dots, k^{\ell-1}-1$.	
	Since Statement~\ref{prop:WapproxForAugmentedProblemd} implies that $w^{\ell} \notin P_<([k^{\ell-1}])$, this yields that $k^{\ell} > k^{\ell-1}$ (for $\ell \geq1$) and, hence, the construction indeed terminates after at most $k^{\max} - k^0<K$ steps with~$w^L \in \Wapprox$.
	
	Furthermore, Statement~\ref{prop:WapproxForAugmentedProblemd} and Lemma~\ref{prop:ApproximationBoundaryAugmenetedProblem} imply that any $\beta$-approximation for~$w^L$ is a $(\beta + \varepsilon')$-approximation for $\proj^{[k^{\ell'}]}(w^L)$ for each $l' \in 0,\dots, L-1$ and, thus, also for~$w$ using Statement~\ref{prop:WapproxForAugmentedProbleme} and the convexity Lemma~\ref{prop:alphaconvex}.\\
	
	It remains to show that Statements \ref{prop:WapproxForAugmentedProblema}--\ref{prop:WapproxForAugmentedProbleme} hold for each $\ell\in\{1,\dots,L\}$. Statement~\ref{prop:WapproxForAugmentedProblema} holds due Lemma~\ref{lem:Lifting}. Statements~\ref{prop:WapproxForAugmentedProblemb}--\ref{prop:WapproxForAugmentedProbleme} are proven by induction over $\ell$:
	
	For $\ell = 1$, in order to prove Statement~\ref{prop:WapproxForAugmentedProblemb}, first consider the case that $w^0_0 > 0$. In this case, $w^1_0>0$ by Statement~\ref{prop:WapproxForAugmentedProblema}. Next, consider the case that $w^0_0 =\dots, w^0_k = 0$ and $w^0_{k+1} >0$ for some $k \in [K-1]$ (note that $w$ cannot be the zero vector since $w \notin \Wapprox$). In this case, we must have $k = k^0$ by definition of~$k^0$, and, therefore,
	\begin{align*}
	w^1_0 = \frac{1}{k^0 +1} \cdot c \cdot w^0_{k^0 +1} = \frac{1}{k+1} \cdot c \cdot w^0_{k+1} > 0.
	\end{align*}
	The inequality $w^1_{k^0} \leq w^1_{k^0 + 1}$ even holds with strict inequality since, in both cases, it holds that $w^1_{k^0} \leq c \cdot w^0_{k^0 +1} < w^0_{k^0 +1} = w^1_{k^0 + 1}$. All other inequalities of Statement~\ref{prop:WapproxForAugmentedProblemb} follow from the corresponding inequalities for $\ell = 0$ (or trivially hold for $i = 1,\dots,k^1$ if $w^0_0 = \dots = w^0_{k^0} = 0$). Statement~\ref{prop:WapproxForAugmentedProblemc} is a direct consequence of our choice of~$k^0$ and~$k^{\max}$, and Statements~\ref{prop:WapproxForAugmentedProblemd} and \ref{prop:WapproxForAugmentedProbleme} immediately follow from Lemma~\ref{lem:Lifting}.
	
	Now assume that Statements~\ref{prop:WapproxForAugmentedProblemb}--\ref{prop:WapproxForAugmentedProbleme} hold for some $\ell \in \{1,\dots, L-1\}$. Then, Statements~\ref{prop:WapproxForAugmentedProblemb}--\ref{prop:WapproxForAugmentedProbleme} hold for $\ell +1$: The inequality $w^{\ell +1}_0 > 0$ holds since $w^{\ell +1}_0 \geq w^{\ell}_0 >0$ due to Statements~\ref{prop:WapproxForAugmentedProblema} and~\ref{prop:WapproxForAugmentedProblemb}. Again, the inequality $w^{\ell +1}_{k^\ell} \leq w^{\ell +1}_{k^\ell+1}$ holds with strict inequality since
	\begin{align*}
	w^{\ell +1}_{k^\ell} = \frac{w^\ell_{k^\ell}}{\sum_{j=0}^{k^\ell} w^\ell_j} \cdot c \cdot w^{\ell}_{k^\ell +1} \leq c \cdot w^\ell_{k^\ell + 1} < w^\ell_{k^\ell +1} = w^{\ell +1}_{k^\ell + 1},
	\end{align*}
	and all other inequalities of Statement~\ref{prop:WapproxForAugmentedProblemb} immediately follow from the corresponding inequalities for~$\ell$. In order to prove Statement~\ref{prop:WapproxForAugmentedProblemc}, note that, for $k \in [k^\ell -1]$, it holds that $w^\ell \notin P_<([k])$ by the choice of~$k^\ell$. For $k = k^{\max} +1 , \dots, K$, we have $w^\ell \notin P_<([k])$ by Statement~\ref{prop:WapproxForAugmentedProblema} and Lemma~\ref{lem:liftinvariance}. For Statement~\ref{prop:WapproxForAugmentedProblemd}, we have $w^\ell \in P_=([k^{\ell'}])$, i.e.,
	\begin{align*}
		\sum_{i = 0}^{k^{\ell'}} w^\ell_i = c \cdot w^\ell_{k^{\ell'} + 1}
	\end{align*}
	for $\ell' = 0, \dots, \ell -1$. Thus,
	\begin{align*}
		\sum_{i = 0}^{k^{\ell'}} w^{\ell + 1}_i = \sum_{i = 0}^{k^{\ell'}} \frac{w^\ell_i}{\sum_{j=0}^{k^\ell} w^\ell_j} \cdot c\cdot w^\ell_{k^\ell + 1} = \frac{c \cdot w^\ell_{k^{\ell'} +1}}{\sum_{j=0}^{k^\ell} w^\ell_j} \cdot c \cdot w^{\ell}_{k^\ell + 1} = c \cdot w^{\ell+1}_{k^{\ell'} +1 }
	\end{align*}
	for $\ell' = 0,\dots, \ell -1$. Moreover, by Lemma~\ref{lem:Lifting}, it holds that $w^{\ell +1} \in P_=([k^\ell])$, which concludes the proof of Statement~\ref{prop:WapproxForAugmentedProblemd}. Finally, Statement~\ref{prop:WapproxForAugmentedProbleme} holds for~$\ell +1$ since, by induction hypothesis, we know that $w \in \conv\left( \{w^\ell\} \cup \{ \proj^{[k^0]}(w^\ell), \dots, \proj^{[k^{\ell -1}]}(w^\ell)\} \right)$, which means that there exist coefficients $\theta_0,\dots, \theta_\ell \in [0,1]$ such that
	\begin{align*}
		w = \theta_{\ell} \cdot w^\ell + \sum_{\ell' =0}^{\ell -1} \theta_{\ell'} \cdot \proj^{[k^{\ell'}]}(w^\ell) \text{ and } \sum_{\ell' = 0}^{\ell} \theta_{\ell'} = 1.
	\end{align*}
	Lemma~\ref{lem:Lifting} implies that $w^\ell \in \conv\left( \{w^{\ell +1},\proj^{[k^{\ell}]}(w^{\ell +1})\}\right)$, i.e., there exists some $\mu \in [0,1]$ such that
	\begin{align*}
		w^\ell = \mu \cdot w^{\ell +1} + (1 - \mu) \cdot \proj^{[k^{\ell}]}(w^{\ell +1}).
	\end{align*}
	Note that, since $[k^{\ell'}] \subseteq [k^{\ell}]$ for $\ell' = 0, \dots, \ell-1$, it holds that
	\begin{align*}
	\proj^{[k^{\ell'}]} \left( \proj^{[k^{\ell}]}(w^{\ell +1})\right) = \proj^{[k^{\ell}]}(w^{\ell +1})
	\end{align*}
	and, thus,
	\begin{align*}
	w &= \theta_{\ell} \cdot w^{\ell} +  \sum_{\ell' = 0}^{\ell -1} \theta_{\ell'} \cdot \proj^{[k^{\ell'}]} (w^{\ell})\\
	&= \theta_{\ell} \cdot \left( \mu \cdot w^{\ell +1} + (1 - \mu) \cdot \proj^{[k^{\ell}]}(w^{\ell +1})  \right) + \sum_{\ell' = 0}^{\ell -1} \theta_{\ell'} \cdot \proj^{[k^{\ell'}]} \left(  \mu \cdot w^{\ell+1} + (1 - \mu) \cdot \proj^{[k^{\ell}]}(w^{\ell+1})  \right)\\
	&= \theta_{\ell} \cdot \mu  \cdot  w^{\ell+1} + \theta_{\ell} \cdot (1 - \mu) \cdot \proj^{[k^{\ell}]}(w^{\ell +1})\\
	&\hspace{3cm} +\sum_{\ell' = 0}^{\ell-1} \theta_{\ell'} \cdot \mu \cdot \proj^{[k^{\ell'}]}( w^{\ell +1} )  +\sum_{\ell' = 0}^{\ell-1} \theta_{\ell'} \cdot (1 - \mu) \cdot  \proj^{[k^{\ell'}]}(\proj^{[k^{\ell}]}(w^{\ell +1}))\\
	&=\theta_{\ell} \cdot \mu  \cdot  w^{\ell+1}  + \sum_{\ell' = 0}^{\ell-1} \theta_{\ell'} \cdot \mu \cdot \proj^{[k^{\ell'}]}( w^{\ell +1} ) +
	 (1 - \mu) \cdot \proj^{[k^{\ell}]}(w^{\ell+1})
	\end{align*}
	with
	\begin{align*}
	 \theta_{\ell} \cdot \mu  + \sum_{\ell' = 0}^{\ell-1} \theta_{\ell'} \cdot \mu +	 (1 - \mu) = \mu + (1 - \mu) = 1,
	\end{align*}
	i.e., $w \in \conv\left( \{w^{\ell+1} \} \cup \{ \proj^{[k^0]}(w^{\ell+1}), \dots, \proj^{[k^{\ell}]}(w^{\ell+1}) \}\right)$, which completes the induction and the proof.
\end{proof}

\begin{figure}[H]
	\centering
	\begin{tikzpicture}
	\begin{scope}[name = Voll, scale = 0.3,x  = {(-0.5cm,-0.5cm)},
	y  = {(0.9659cm,-0.25882cm)},
	z  = {(0cm,1cm)}]
	
	\def\a{10}
	\def\eps{2}
	\def\opac{0.3}
	\def\color{black!50}
	
	\draw[thick,->] (0,0,0) -- (11,0,0) node[label = below:$w_0$] {};
	\draw[thick,->] (0,0,0) -- (0,11,0) node[label = below:$w_1$] {};
	\draw[thick,->] (0,0,0) -- (0,0,11) node[label = right:$w_2$] {};
	
	\draw[\color,fill =\color, opacity =\opac] (0,0,0) -- (\a,\a,\eps) -- (\a,0,0) -- cycle;
	\draw[\color,fill = \color, opacity =\opac] (0,0,0) -- (\a,\a,\eps) -- (0,\a,0) -- cycle;
	
	\draw[\color,fill = \color, opacity =\opac] (0,0,0) -- (\a,\eps,\a) -- (\a,0,0) -- cycle;
	\draw[\color,fill = \color, opacity =\opac] (0,0,0) -- (\a,\eps,\a) -- (0,0,\a) -- cycle;
	
	\draw[\color,fill = \color, opacity =\opac] (0,0,0) -- (\eps,\a,\a) -- (0,0,\a) -- cycle;
	\draw[\color,fill = \color, opacity =\opac] (0,0,0) -- (\eps,\a,\a) -- (0,\a,0) -- cycle;
	
	\draw[dotted] (\a,\a,\eps) -- (\a,\a,0);
	\draw[dotted] (\a,0,0) -- (\a,\a,0) -- (0,\a,0);
	
	\draw[dotted] (\a,\eps,\a) -- (\a,0,\a);
	\draw[dotted] (\a,0,0) -- (\a,0,\a) -- (0,0,\a);
	
	\draw[dotted] (\eps,\a,\a) -- (0,\a,\a);
	\draw[dotted] (0,\a,0) -- (0,\a,\a) -- (0,0,\a);

	\draw[\color,fill =\color, opacity =\opac] (0,0,0) -- (\a,0,0.5*\eps) -- (\a,0.5*\eps,0) -- cycle;
	\draw[\color,fill =\color, opacity =\opac] (0,0,0) -- (0,\a,0.5*\eps) -- (0.5*\eps,\a,0) -- cycle;
	\draw[\color,fill =\color, opacity =\opac] (0,0,0) -- (0.5*\eps,0,\a) -- (0,0.5*\eps,\a) -- cycle;	
	%
	\draw[blue,thick,opacity = 0.2,fill=blue,dashed] (0,0,5) -- (11,0,5) -- (11,11,5)  -- (0,11,5) -- cycle;
	
	\end{scope}
	\draw[->,bend left] (1,1) to [out=30, in = 150]  (4.8,1.9);
	\begin{scope}[shift ={(5,-1.5)} , scale = 0.7, transform shape]
	\draw[<->] (0,6)  -- (0,0)  -- (6,0) ;
	\node[label=below:$w_0$] at (6,0) {};
	\node[] at (0,0) {};
	\node[label=left:$w_1$] at (0,6) {}; 
	\draw[fill = black!50, opacity = 0.4] (0,0) -- (0,5) -- (5,0) -- (0,0);
	
	
	
	\draw (2.5,2.5) -- (3,2.8) node[label ={[above right,,xshift=0cm, yshift=-0.2cm]:{$P_=(\{0,1\})$}}] {};
	\draw (2,16/3) -- (2.2,16/3) node[label ={[right,,xshift=0cm, yshift=-0.2cm]:{$P_=(\{0\})$}}] {};
	\draw (16/3,2) -- (16/3,2.2) node[label ={[above,,xshift=0cm, yshift=-0.2cm]:{$P_=(\{1\})$}}] {};
	
	\draw[fill = black!50, opacity = 0.2] (0,0) -- (5.5,16.5/8) --  (5.5,0);
	\draw[fill = black!50, opacity = 0.2] (0,0) -- (16.5/8,5.5) --  (0,5.5);
	
	\draw[->] (1,1)   -- (-1,-1) node[label=below:$w_2$]{};
	
	\draw[dashed,fill = red, opacity = 0.1] (1.35,3.65) -- (0,3.65) -- (0,0);	
	
	\node[red, label ={[red,xshift=0cm, yshift=-0.2cm]above: $w^0$}] at (0.2,2) {$\times$};
	\node[red, label ={[red,xshift=-0.2cm, yshift=0cm]right: $w^1$}] at (0.78,2) {$\times$};
	\node[red, label ={[red,xshift=-0.2cm, yshift=0cm]right: $w^2$}] at (1.35,3.65) {$\times$};
	
	\node[red, label ={[red,xshift=0.2cm, yshift=0cm]left: $\proj^{\{0\}} (w^2)$}] at (0,3.65) {$\times$};
	\node[red, label ={[red,xshift=0.2cm, yshift=0cm]left: $\proj^{\{0,1\}}(w^2)$}] at (0,0) {$\times$};
	\draw[dotted,red] (0,2) -- (0.78,2);
	\draw[dotted,red] (0,0) -- (1.35,3.65);
	\end{scope}
	\end{tikzpicture}
	\caption{Illustration of the sequence of (lifted) weights constructed in the proof of Proposition~\ref{prop:WapproxForAugmentedProblem} for a weight $w \notin \Wapprox$ with $w_0 < w_1 < w_2 = 1$. Left: Embedding of $\{ w \in \R^3_\geqq: w_2 = 1 \}$ into $\R^3_\geqq$. Right: Cross-section at $w_2 = 1$. Note that $w^2 \in \Wapprox$ and $w = w^0 \in \conv\left( \{w^2,\proj^{\{0\}}(w^2),\proj^{\{0,1\}}(w^2)\} \right)$.}
	\label{fig:ConstrIndexSetsAbdWeights}
\end{figure}

The following corollary states that the same result holds true for the $K$-dimensional simplex $W_1 = \{ w \in \R^{K+1}_\geqq: \sum_{i = 0}^K w_i = 1\}$, see Figure~\ref{fig:Wapprox1} for an illustration.

\begin{corollary}\label{cor:approxnormedweightset}
	For $0 < \varepsilon' <1$ and $\beta \geq 1$, define
	\begin{align*}
	\Wapproxone \colonequals \Wapprox \cap W_1.
	\end{align*}
	For each weight $w \in W_1 \setminus \Wapproxone$, there exists a weight $w' \in \Wapproxone$ such that any $\beta$-approximation for $w'$ is a $(\beta + \varepsilon')$-approximation for $w$.
\end{corollary}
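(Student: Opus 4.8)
The plan is to deduce the corollary directly from Proposition~\ref{prop:WapproxForAugmentedProblem} together with the scaling invariance of approximate optimality recorded in Observation~\ref{lem:multapproxinvariant}. Fix a weight $w \in W_1 \setminus \Wapproxone$. Since $W_1 \subseteq \R^{K+1}_\geqq$ and $\Wapproxone = \Wapprox \cap W_1$, the assumption $w \notin \Wapproxone$ together with $w \in W_1$ forces $w \notin \Wapprox$. Hence Proposition~\ref{prop:WapproxForAugmentedProblem} applies and yields a weight $\bar w \in \Wapprox$ such that every $\beta$-approximation for $\bar w$ is a $(\beta + \varepsilon')$-approximation for $w$. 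The remaining work is to replace $\bar w$ by a suitable weight on the simplex $W_1$ without losing this property.

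To this end I would first observe that $\bar w \neq 0$: the weight produced by the constructive proof of Proposition~\ref{prop:WapproxForAugmentedProblem} is the last element $w^L$ of the lifting sequence, which satisfies $w^L_0 > 0$ by Statement~\ref{prop:WapproxForAugmentedProblemb} of that proof (and $w \neq 0$ in any case, since $\sum_{i=0}^K w_i = 1$). Consequently $\norm{\bar w} = \sum_{i=0}^K \bar w_i > 0$, so I may define $w' \colonequals \tfrac{1}{\norm{\bar w}}\,\bar w$, which lies in $W_1$. Next I would note that $\Wapprox$ is a cone: each set $P_<(I)$ from Definition~\ref{def:sets} is cut out by strict inequalities that are homogeneous of degree one, hence invariant under multiplication by positive scalars, so their union and therefore its complement $\Wapprox$ in $\R^{K+1}_\geqq$ are invariant as well. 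Since $\bar w \in \Wapprox$ and $\tfrac{1}{\norm{\bar w}} > 0$, this gives $w' \in \Wapprox$, and thus $w' \in \Wapprox \cap W_1 = \Wapproxone$.

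Finally, applying Observation~\ref{lem:multapproxinvariant} with the positive scalar $t = \norm{\bar w}$ shows that a solution $x \in X$ is a $\beta$-approximation for $w' = \tfrac{1}{\norm{\bar w}}\bar w$ if and only if it is a $\beta$-approximation for $\bar w$. Combining this equivalence with the property of $\bar w$ from Proposition~\ref{prop:WapproxForAugmentedProblem} yields that any $\beta$-approximation for $w'$ is a $(\beta + \varepsilon')$-approximation for $w$, as required. The argument is essentially bookkeeping; the only point that needs a moment of care — and hence the main (minor) obstacle — is justifying $\bar w \neq 0$ so that the normalization makes sense, which is why the inspection of the lifting construction above is needed. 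As a fallback one may observe that $\Wapproxone$ is nonempty, since the uniform weight $(\tfrac{1}{K+1},\dots,\tfrac{1}{K+1})$ lies in no $P_<(I)$ because $|I| \geq 1 > c$; in the degenerate situation where Proposition~\ref{prop:WapproxForAugmentedProblem} returned $\bar w = 0$, every solution would be forced to be a $(\beta+\varepsilon')$-approximation for $w$, and any element of $\Wapproxone$ could be taken as $w'$.
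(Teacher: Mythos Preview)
Your proof is correct and follows essentially the same route as the paper: apply Proposition~\ref{prop:WapproxForAugmentedProblem} to obtain $\bar w \in \Wapprox$, observe that $\Wapprox$ is a cone so that the normalized weight $w' = \bar w / \sum_i \bar w_i$ lies in $\Wapproxone$, and transfer the approximation guarantee via Observation~\ref{lem:multapproxinvariant}. Your treatment is in fact more careful than the paper's, since you explicitly justify $\bar w \neq 0$ (which the paper leaves implicit), and your fallback argument is a nice safety net though not actually needed.
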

\begin{proof}
	Note that $\Wapprox$ is a cone, i.e., $w \in \Wapprox$ if and only if $t \cdot w \in \Wapprox$ for each $t >0$. In particular, for each weight~$w \in \R^{K+1}_\geqq\setminus \{0\}$, it holds that
	\begin{align*}
		w \in \Wapprox \Longleftrightarrow \frac{1}{\sum_{i=0}^K w_i} \cdot w \in \Wapproxone.
	\end{align*}
	Thus, the claim follows immediately from Observation~\ref{lem:multapproxinvariant} and Lemma~\ref{prop:ApproximationBoundaryAugmenetedProblem}.
\end{proof}
\begin{figure}[H]
\centering
\begin{tikzpicture}\begin{scope}[scale = 0.3, x = {(-0.5cm,-0.5cm)}, y = {(0.9659cm,-0.25882cm)}, z = {(0cm,1cm)}]
\def\a{10}
\def\eps{2}
\def\opac{0.3}
\def\color{black!50}

\draw[thick,->] (0,0,0) -- (11,0,0) node[label=below left:$w_0$] {};
\draw[thick,->] (0,0,0) -- (0,11,0) node[label=below right:$w_1$] {};
\draw[thick,->] (0,0,0) -- (0,0,11) node[label=$w_2$] {};

\draw[\color,fill =\color, opacity =\opac] (0,0,0) -- (\a,\a,\eps) -- (\a,0,0) -- cycle;
\draw[\color,fill = \color, opacity =\opac] (0,0,0) -- (\a,\a,\eps) -- (0,\a,0) -- cycle;

\draw[\color,fill = \color, opacity =\opac] (0,0,0) -- (\a,\eps,\a) -- (\a,0,0) -- cycle;
\draw[\color,fill = \color, opacity =\opac] (0,0,0) -- (\a,\eps,\a) -- (0,0,\a) -- cycle;

\draw[\color,fill = \color, opacity =\opac] (0,0,0) -- (\eps,\a,\a) -- (0,0,\a) -- cycle;
\draw[\color,fill = \color, opacity =\opac] (0,0,0) -- (\eps,\a,\a) -- (0,\a,0) -- cycle;

\draw[dotted] (\a,\a,\eps) -- (\a,\a,0);
\draw[dotted] (\a,0,0) -- (\a,\a,0) -- (0,\a,0);

\draw[dotted] (\a,\eps,\a) -- (\a,0,\a);
\draw[dotted] (\a,0,0) -- (\a,0,\a) -- (0,0,\a);

\draw[dotted] (\eps,\a,\a) -- (0,\a,\a);
\draw[dotted] (0,\a,0) -- (0,\a,\a) -- (0,0,\a);

\draw[\color,fill =\color, opacity =\opac] (0,0,0) -- (\a,0,0.5*\eps) -- (\a,0.5*\eps,0) -- cycle;
\draw[\color,fill =\color, opacity =\opac] (0,0,0) -- (0,\a,0.5*\eps) -- (0.5*\eps,\a,0) -- cycle;
\draw[\color,fill =\color, opacity =\opac] (0,0,0) -- (0.5*\eps,0,\a) -- (0,0.5*\eps,\a) -- cycle;

\draw[blue!30,fill = blue!30,opacity=0.5](5,0,0) -- (0,0,5) --  (0,5,0) -- cycle;
\draw[white,fill = white,scale = 5,opacity=0.5] (0.0083,0.0826, 1 - 0.0083 -0.0826) -- (0.0826, 0.0083, 1 -0.0826 - 0.0083 )  -- (0.4762, 0.0476, 1 - 0.4762 - 0.0476) -- (0.9091, 0.0083, 1 - 0.9091 - 0.0083 ) -- (0.9091, 0.0826, 1 -0.9091 - 0.0826 ) -- (0.4762, 0.4762, 1 - 0.4762 - 0.4762) --  (0.0826, 0.9091, 1 - 0.0826 - 0.9091) -- (0.0083, 0.9091, 1 - 0.0083 - 0.9091 ) -- (0.0476, 0.4762, 1 - 0.0476 - 0.4762 ) -- cycle;

\end{scope}
\begin{scope}[ shift = {(4.5,-1.5)}, scale = 4.5,x  = {(0cm,0cm)},	y  = {(1cm,0cm)},	z  = {(0.5cm,0.8660cm)}]

\draw[black!30,fill = black!30](1.02,-0.01,-0.01) -- (-0.01,-0.01,1.02) --  (-0.01,1.02,-0.01) -- cycle;
\draw[white,fill = white] (0.0083,0.0826, 1 - 0.0083 -0.0826) -- (0.0826, 0.0083, 1 -0.0826 - 0.0083 )  -- (0.4762, 0.0476, 1 - 0.4762 - 0.0476) -- (0.9091, 0.0083, 1 - 0.9091 - 0.0083 ) -- (0.9091, 0.0826, 1 -0.9091 - 0.0826 ) -- (0.4762, 0.4762, 1 - 0.4762 - 0.4762) --  (0.0826, 0.9091, 1 - 0.0826 - 0.9091) -- (0.0083, 0.9091, 1 - 0.0083 - 0.9091 ) -- (0.0476, 0.4762, 1 - 0.0476 - 0.4762 ) -- cycle;

\def\t{2}
\draw[white,fill = white] (0.0083,0.0826, 1 - 0.0083 -0.0826) -- (0.0826, 0.0083, 1 -0.0826 - 0.0083 )  -- (0.4762, 0.0476, 1 - 0.4762 - 0.0476) -- (0.9091, 0.0083, 1 - 0.9091 - 0.0083 ) -- (0.9091, 0.0826, 1 -0.9091 - 0.0826 ) -- (0.4762, 0.4762, 1 - 0.4762 - 0.4762) --  (0.0826, 0.9091, 1 - 0.0826 - 0.9091) -- (0.0083, 0.9091, 1 - 0.0083 - 0.9091 ) -- (0.0476, 0.4762, 1 - 0.0476 - 0.4762 ) -- cycle;

\draw[fill = blue,opacity = 0.1](1.02,-0.01,-0.01) -- (-0.01,-0.01,1.02) --  (-0.01,1.02,-0.01) -- cycle;

\node at (0.33,0.33, 0.33) {$\Wapproxone$};
\draw[dashed] (0.0083,0.0083, 1 - 0.0083 -0.0083) -- (1 -0.0166, 0.0083, 0.0083) -- (0.0083, 1- 0.0166, 0.0083) -- cycle;

\node[label=below left:{$\scriptstyle (1,0,0)^\top$}] at (1,0,0) {};
\node[label=below right:{$\scriptstyle (0,1,0)^\top$}] at (0,1.0) {};
\node[label=above:{$\scriptstyle (0,0,1)^\top$}] at (0,0,1) {};
\end{scope}
\end{tikzpicture}
\caption{Illustration of the set $\Wapproxone$ for a linear multi-parametric optimization problem ($K = 2$). 
	Left: $\Wapproxone$ as a subset of $\R^3_\geqq$. Right: Schematic view of $\Wapproxone$ (light blue). The dashed lines indicate the boundary of $\bWapproxone$ defined in Lemma~\ref{lem:bWapprox1}.}
\label{fig:Wapprox1}
\end{figure}
The following lemma provides a lower bound on the components of weights~$w \in \Wapproxone$. This allows us to derive lower and upper bounds on $\Lapprox$, which will be useful when proving the polynomial cardinality of the grid~$\Grid$.
\begin{lemma}\label{lem:bWapprox1}
	Let $0 < \varepsilon' < 1$ and $\beta \geq 1$. Define
	\begin{align*}
	\bWapproxone \colonequals \left\{ w \in W_1 : w_i \geq \frac{1}{(K+1)!} \cdot c^K \text{ for all } i \in \{0,\dots,K\} \right\}.
	\end{align*}
	Then, $\Wapproxone \subseteq \bWapproxone \subseteq W_1$.
\end{lemma}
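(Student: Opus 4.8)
The plan is to unpack the definitions of $\Wapprox$ and $\Wapproxone$ directly. Fix an arbitrary $w \in \Wapproxone = \Wapprox \cap W_1$; I want to show $w_i \ge \frac{1}{(K+1)!}\,c^K$ for every $i \in \{0,\dots,K\}$. The second inclusion $\bWapproxone \subseteq W_1$ is immediate from the definition, so only the first needs work. Since $\Wapprox$ (and $W_1$) are invariant under permutation of the coordinates --- the same symmetry already used in the proof of Proposition~\ref{prop:WapproxForAugmentedProblem} --- I may assume without loss of generality that $w_0 \le w_1 \le \dots \le w_K$, and then it suffices to bound the smallest component $w_0$ from below.

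The first key step is to extract a useful inequality from membership in $\Wapprox$. Because $w \in \Wapprox$, we have $w \notin P_<(I)$ for every index set $\emptyset \neq I \subsetneq \{0,\dots,K\}$, and I would apply this to the nested sets $I = \{0,\dots,k\}$ for $k = 0,\dots,K-1$. Negating the ``for all $j \notin I$'' quantifier in the definition~\eqref{eq:Psets} of $P_<(I)$, the statement $w \notin P_<(\{0,\dots,k\})$ says there is some index $j > k$ with $\sum_{i=0}^k w_i \ge c\, w_j$; since $w$ is sorted, $w_j \ge w_{k+1}$, and hence
\begin{align*}
\sum_{i=0}^k w_i \ge c\, w_{k+1} \qquad \text{for } k = 0,\dots,K-1 .
\end{align*}
The second key step is the trivial bound $\sum_{i=0}^k w_i \le (k+1)\,w_k$, valid because $w_i \le w_k$ for $i \le k$. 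Combining the two gives the recursive estimate $w_k \ge \frac{c}{k+1}\,w_{k+1}$ for $k = 0,\dots,K-1$.

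The final step is to chain these inequalities. Telescoping from $k = 0$ up to $k = K-1$ yields $w_0 \ge \frac{c^K}{K!}\,w_K$, and since $w \in W_1$ forces the largest of the $K+1$ components to satisfy $w_K \ge \frac{1}{K+1}$, we obtain $w_0 \ge \frac{c^K}{(K+1)!}$. As $w_0$ is the minimum component, every $w_i$ satisfies the same lower bound, so $w \in \bWapproxone$, proving $\Wapproxone \subseteq \bWapproxone$.

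I do not expect a serious obstacle here, as the argument is essentially a telescoping estimate; the only points requiring care are correctly negating the universal quantifier in the definition of $P_<(I)$ to pull out a single index $j$, and justifying the reduction to a sorted weight vector --- but the latter is exactly the permutation-symmetry normalization already invoked in Proposition~\ref{prop:WapproxForAugmentedProblem}, so it can simply be cited. One could in fact obtain a slightly sharper constant $\bigl(\tfrac{c}{1+c}\bigr)^K$ by working with the partial sums $S_k = \sum_{i=0}^k w_i$ instead, but the stated bound $\frac{1}{(K+1)!}c^K$ is all that is needed for the subsequent bounds on $\Lapprox$ and the grid~$\Grid$, and the crude chain above delivers it cleanly.
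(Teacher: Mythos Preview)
Your proposal is correct and follows essentially the same approach as the paper: sort the coordinates by symmetry, use $w \notin P_<(\{0,\dots,k\})$ together with $(k+1)w_k \ge \sum_{i=0}^k w_i$ to obtain the recursive bound $w_k \ge \tfrac{c}{k+1}\,w_{k+1}$, then telescope and finish with $w_K \ge \tfrac{1}{K+1}$ from $w \in W_1$. The paper's proof is identical in substance, only written as the chain $w_0 \ge c\,w_1,\ 2w_1 \ge c\,w_2,\ \dots,\ K\,w_{K-1} \ge c\,w_K$ rather than as an explicit recursion.
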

\begin{proof}
	Let $w \in \Wapproxone$. By symmetry of $W_1$, $\Wapproxone$, and $\bWapproxone$, we can assume without loss of generality that $w_0 \leq w_1 \leq \dots  \leq w_K$ holds. Since $w \in \Wapprox$, $w$ satisfies $w \notin P_{<}(I)$ for all $ \emptyset \neq I \subsetneq \{0,\dots,K\}$. In particular, this holds for all $I = \{0,\dots,k\}$ with $0 \leq k \leq K-1$. Hence,
	\begin{align*}
	    w_0 \geq c \cdot w_1, \quad  2 w_1 \geq w_0 + w_1 \geq c\cdot w_2,\ \dots \ ,K \cdot  w_{K-1} \geq \sum_{i=0}^{K-1} w_i \geq c \cdot w_K.
	\end{align*}
	With $(K +1) \cdot w_K \geq \sum_{i = 0}^K w_i = 1$, it follows that $w_i \geq \frac{1}{ ( K +1)!}\cdot c^K$ for all $i \in \{0,\dots, K\}$.
\end{proof}
\noindent
Next, $\Wapproxone$ is transformed to $\Lapprox$, see Figure~\ref{fig:Lapprox} for an illustration.
Recall that $$\phi: W_1 \cap \{ w: w_0 > 0\} \longrightarrow \Lambda, (w_0,w_1, \dots,w_K) \mapsto \left(\frac{w_1}{w_0} + \lmin_1, \dots, \frac{w_K}{w_0} + \lmin_K\right).$$
\begin{corollary}\label{cor:LapproxApproximationGuarantee}
	For $ 0 < \varepsilon' <1$ and $\beta \geq 1$, define $\Lapprox \colonequals \phi(\Wapproxone)$.
	Then, for each parameter vector~$\lambda \in \Lambda \setminus \Lapprox$, there exists a parameter vector~$\lambda' \in \Lapprox$ such that any $\beta$-approximation for~$\lambda'$ is a $(\beta+\varepsilon')$-approximation for~$\lambda$. 
\end{corollary}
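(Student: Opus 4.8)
The plan is to transfer the statement from the weight simplex~$W_1$, where Corollary~\ref{cor:approxnormedweightset} already provides the desired approximation, to the parameter set~$\Lambda$ via the map~$\phi$. The key bookkeeping tool is the identification noted right after the augmented problem was introduced: for $\lambda \in \Lambda$, a solution is a $\gamma$-approximation for~$\Pi(\lambda)$ precisely when it is a $\gamma$-approximation for the weight $w(\lambda) \colonequals (1, \lambda_1 - \lmin_1, \dots, \lambda_K - \lmin_K) \in \R^{K+1}_{\geqq}$, and by Observation~\ref{lem:multapproxinvariant} this is in turn equivalent to being a $\gamma$-approximation for the normalized weight $\tilde{w}(\lambda) \colonequals \bigl( \sum_{i=0}^K w(\lambda)_i \bigr)^{-1} w(\lambda) \in W_1$. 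Note that the normalizing sum is at least $w(\lambda)_0 = 1 > 0$, so $\tilde{w}(\lambda)$ is well defined and has $\tilde{w}(\lambda)_0 > 0$.

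First I would check that $\phi(\tilde{w}(\lambda)) = \lambda$: plugging $\tilde{w}(\lambda)$ into the definition of~$\phi$, the $k$-th coordinate equals $\tilde{w}(\lambda)_k / \tilde{w}(\lambda)_0 + \lmin_k = w(\lambda)_k / w(\lambda)_0 + \lmin_k = (\lambda_k - \lmin_k) + \lmin_k = \lambda_k$. Hence, if $\tilde{w}(\lambda)$ belonged to~$\Wapproxone$, we would get $\lambda = \phi(\tilde{w}(\lambda)) \in \phi(\Wapproxone) = \Lapprox$, contradicting the hypothesis $\lambda \in \Lambda \setminus \Lapprox$. Therefore $\tilde{w}(\lambda) \in W_1 \setminus \Wapproxone$, and Corollary~\ref{cor:approxnormedweightset} yields a weight $w' \in \Wapproxone$ such that every $\beta$-approximation for~$w'$ is a $(\beta + \varepsilon')$-approximation for~$\tilde{w}(\lambda)$.

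Then I would set $\lambda' \colonequals \phi(w')$, so that $\lambda' \in \Lapprox$ by definition of~$\Lapprox$; for this to make sense I need $w'_0 > 0$, which is exactly what Lemma~\ref{lem:bWapprox1} guarantees, since $w' \in \Wapproxone \subseteq \bWapproxone$ forces $w'_i \geq \frac{1}{(K+1)!}\, c^K > 0$ for every~$i$. A one-line computation as above gives $w(\lambda') = (1, \lambda'_1 - \lmin_1, \dots, \lambda'_K - \lmin_K) = (w'_0)^{-1}\, w'$, a positive multiple of~$w'$. Now I chain the equivalences: a $\beta$-approximation for~$\lambda'$ is, by the identification above and Observation~\ref{lem:multapproxinvariant} applied to $w(\lambda') = (w'_0)^{-1} w'$, a $\beta$-approximation for~$w'$; by the choice of~$w'$ it is then a $(\beta + \varepsilon')$-approximation for~$\tilde{w}(\lambda)$; and by Observation~\ref{lem:multapproxinvariant} again together with the equivalence between $\tilde{w}(\lambda)$, $w(\lambda)$, and~$\Pi(\lambda)$, it is a $(\beta + \varepsilon')$-approximation for~$\lambda$. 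This is precisely the claim.

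Since all the real work is already contained in Proposition~\ref{prop:WapproxForAugmentedProblem}, Corollary~\ref{cor:approxnormedweightset}, and Lemma~\ref{lem:bWapprox1}, I do not expect a genuine obstacle here. The only delicate points are making sure that~$\phi$ is evaluated only at weights with strictly positive zeroth component (the reason Lemma~\ref{lem:bWapprox1} is needed) and applying the scaling-invariance of approximation (Observation~\ref{lem:multapproxinvariant}) in both directions in order to move freely between $\Pi(\lambda)$, the weight $w(\lambda)$, and its normalization $\tilde{w}(\lambda)$.
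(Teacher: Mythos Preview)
Your proposal is correct and follows essentially the same route as the paper: normalize $\lambda$ to a weight in $W_1$, invoke Corollary~\ref{cor:approxnormedweightset} to obtain $w' \in \Wapproxone$, set $\lambda' = \phi(w')$, and use Observation~\ref{lem:multapproxinvariant} twice to pass back and forth between parameters and weights. You are in fact slightly more careful than the paper in two places: you explicitly argue that $\tilde{w}(\lambda) \notin \Wapproxone$ (which the paper glosses over), and you invoke Lemma~\ref{lem:bWapprox1} to guarantee $w'_0 > 0$ so that $\phi(w')$ is defined, a point the paper leaves implicit.
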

\begin{proof}
Let $\lambda \in \Lambda \setminus \Lapprox$. Define $w=(w_0,\dots,w_k)$ by 
\begin{align*}
w_0 \colonequals \frac{1}{1 +\sum_{k=1}^K (\lambda_k - \lmin_k)},&& \text{ and } && w_i \colonequals \frac{\lambda_i - \lmin_i}{1 +\sum_{k=1}^K \lambda_k - \lmin_k} \text{ for } i = 1, \dots, K.
\end{align*}
Then, $w \in W_1$ and, thus, there exists a weight $w' \in \Wapproxone$ such that any $\beta$-approximation for~$w'$ is a $(\beta+\varepsilon')$-approximation for~$w$ by Corollary~\ref{cor:approxnormedweightset}. Observation~\ref{lem:multapproxinvariant} implies that any
$\beta$-approximation for $\phi(w') \in \Lapprox$ is a $\beta$-approximation for $w'$, which in turn is a		
$(\beta+\varepsilon')$-approximation for~$w$. Applying Observation~\ref{lem:liftinvariance} again yields that any $(\beta + \varepsilon')$-approximation for~$w$ is also a $(\beta+\varepsilon')$-approximation for~$\lambda = \phi(w)$.  	
\end{proof}
\begin{figure}[H]
\centering
\begin{tikzpicture}
\begin{scope}[scale = 0.4,domain=0:8]

\draw[black!25, fill = black!25, opacity = 1] (1,10.5)  -- (1,1) -- (10.5,1) -- (10.5, 10.5) -- (1,10.5);

\draw[black!25, fill = black!25, opacity = 1] (8,1) -- (10.5, 1.27) -- (10.5,0) -- (2,0) -- (2,0.25);
\draw[black!25, fill = black!25, opacity = 1] (1,8) -- (1.27, 10.5) -- (0, 10.5) -- (0,2) -- (0.25, 2);

\draw[black!25, fill = black!25, opacity = 1] (0.125,0.0625) -- (0.5,0.25) -- (2,0.25) -- (2,0) -- (0,0) -- (0,2) -- (0.25,2) -- (0.25,0.5) -- (0.0625,0.125) -- cycle;

\draw[very thin, dashed] (0.7,8.3) -- (8.3,0.7) -- (0.05,0.05) -- cycle;
\node at (1.5,0.55) {$\Lapprox$};

\draw[<->] (0,11) node[left] {$\lambda_2$} -- (0,0) -- (11,0) node[below] {$\lambda_1$};
\end{scope}
\begin{scope}[shift = {(7.5,0.3)},scale = 0.7]
\draw[] (0,6) -- (0,0) -- (6,0);

\draw[ black!25,fill = black!25, opacity = 1] (0,0) -- (0,10/7) -- (4/7, 10/7)-- (10/7, 4/7) --  (10/7,0) -- (0,0);

\node[label =below left:$\lmin$] at (0,0) {$\times$};
\draw[ black!25,fill = black!25, opacity = 1] (0,0) -- (5,2) -- (6, 2) -- (6,0);
\draw[ black!25,fill = black!25, opacity = 1] (0,0) -- (2,5) -- (2,6) -- (0,6);
\node[label =below:$\lmin_1 + \frac{\varepsilon'}{\beta} \frac{\LB}{\UB}$] at (2,0) {$\times$};
\node[label =left:$\lmin_2 + \frac{\varepsilon'}{\beta} \frac{\LB}{\UB}$] at (0,2) {$\times$};

\draw[dotted] (0,2) -- (5,2) -- (5,0) node[label = below:$\lmin_1 + 1$] {$\times$};
\draw[dotted] (2, 0) -- (2,5) -- (0,5) node[label = left:$\lmin_2 + 1$] {$\times$};

\node at (5,5) {$\Lapprox$};

\draw[dashed] (1.02,6) -- (0.47,0.47) -- (6,1.02);
\end{scope}
\end{tikzpicture}
\caption{Illustration of the set $\Lapprox$ (white region). Left: Full schematic view. Right: Focus on $ \{\lmin\} + [0,1]^K$. The dashed lines indicate the boundary of the set $\bLapprox = \phi(\bWapproxone)$ considered in the proof of Lemma~\ref{lem:BoundsonLapprox}.}
\label{fig:Lapprox}
\end{figure}
\noindent
With $\varepsilon' = \frac{\varepsilon}{2}$ and $\beta = (1 + \varepsilon')$, Corollary~\ref{cor:LapproxApproximationGuarantee} states that the set~$\Lapprox$ indeed satisfies Property~\ref{property:LApprox}. 

\bigskip

\noindent
Now, to prove Property~\ref{property:Cardinality}, the following lemma provides useful upper and lower bounds on~$\Lapprox$.
\begin{lemma}\label{lem:BoundsonLapprox}
	For $0< \varepsilon' <1$, $\beta \geq 1$, and~$c$ defined as in~\eqref{eq:c}, it holds that
	\begin{align*}
	\Lapprox \subseteq \{\lmin\} + \left[\frac{c^K}{(K+1)!}, \frac{(K+1)!}{c^K} \right]^K.
	\end{align*}
	In particular, $\Lapprox$ is compact.
\end{lemma}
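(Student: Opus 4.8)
The plan is to derive the bounds directly from Lemma~\ref{lem:bWapprox1} together with the explicit formula for~$\phi$. First I would recall that, by definition, $\Lapprox = \phi(\Wapproxone)$ and that $\phi(w_0,\dots,w_K)_k = \frac{w_k}{w_0} + \lmin_k$ for $k=1,\dots,K$. Hence it suffices to bound the ratios $\frac{w_k}{w_0}$ from above and below, uniformly over all $w \in \Wapproxone$.

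To this end I would invoke Lemma~\ref{lem:bWapprox1}, which gives $\Wapproxone \subseteq \bWapproxone$, i.e.\ every $w \in \Wapproxone$ satisfies $w_i \geq \frac{1}{(K+1)!}\cdot c^K$ for all $i \in \{0,\dots,K\}$. Combining this with $w \in W_1$ (so that $w_i \leq \sum_{j=0}^K w_j = 1$ by nonnegativity of all components) yields the two-sided bound
\begin{align*}
\frac{c^K}{(K+1)!} \leq w_i \leq 1 \qquad \text{for all } i \in \{0,\dots,K\}.
\end{align*}
Applying this to numerator and denominator separately gives, for each $k=1,\dots,K$,
\begin{align*}
\frac{c^K}{(K+1)!} \;=\; \frac{c^K/(K+1)!}{1} \;\leq\; \frac{w_k}{w_0} \;\leq\; \frac{1}{c^K/(K+1)!} \;=\; \frac{(K+1)!}{c^K},
\end{align*}
so that $\phi(w) - \lmin \in \big[\frac{c^K}{(K+1)!}, \frac{(K+1)!}{c^K}\big]^K$. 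Taking the union over all $w \in \Wapproxone$ establishes the claimed inclusion.

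For the compactness statement I would argue that $\Lapprox$ is the continuous image of a compact set. Indeed, each set $P_<(I)$ is relatively open in $\R^{K+1}_\geqq$ (it is defined by strict inequalities), so $\Wapprox = \R^{K+1}_\geqq \setminus \bigcup_I P_<(I)$ is closed; intersecting with the compact simplex $W_1$ shows that $\Wapproxone$ is compact. By Lemma~\ref{lem:bWapprox1} we have $w_0 \geq \frac{c^K}{(K+1)!} > 0$ for every $w \in \Wapproxone$, so $\Wapproxone$ lies in the domain $W_1 \cap \{w : w_0 > 0\}$ on which $\phi$ is continuous; therefore $\Lapprox = \phi(\Wapproxone)$ is compact. (Alternatively, compactness already follows from the boundedness just proved together with closedness of $\Lapprox$, but phrasing it via the continuous image of a compact set is the cleanest.) I do not expect any genuine obstacle here; the only point requiring a little care is checking that $\Wapproxone$ avoids the hyperplane $\{w_0 = 0\}$ so that $\phi$ is actually defined and continuous there — and this is exactly what Lemma~\ref{lem:bWapprox1} provides.
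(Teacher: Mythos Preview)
Your argument is correct and rests on the same key input as the paper's, namely Lemma~\ref{lem:bWapprox1} giving $w_i \geq \frac{c^K}{(K+1)!}$ for all $i$ whenever $w \in \Wapproxone$. The paper, however, takes a slightly more circuitous route: it identifies $\bWapproxone$ as the convex hull of explicit vertices $\bar{w}^0,\dots,\bar{w}^K$, writes each $w \in \Wapproxone \subseteq \bWapproxone$ as a convex combination $\sum_k \theta_k \bar{w}^k$, and then bounds the ratio $\frac{\sum_k \theta_k \bar{w}_i^k}{\sum_k \theta_k \bar{w}_0^k}$ using the extremal values of the vertex coordinates. This yields the marginally sharper intermediate bounds $\frac{c^K}{(K+1)! - K c^K} \leq \lambda_i - \lmin_i \leq \frac{(K+1)!}{c^K} - K$, which the paper then relaxes to the stated bounds. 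Your approach bypasses the convex-hull representation entirely and just combines the lower bound $w_i \geq \frac{c^K}{(K+1)!}$ with the trivial upper bound $w_i \leq 1$ from $w \in W_1$; this is shorter and loses nothing for the purpose at hand. Your compactness argument via the continuous image of a compact set is also cleaner than what the paper records (the paper merely asserts compactness as a consequence of the inclusion), and your explicit verification that $\Wapproxone \subseteq \{w_0 > 0\}$ so that $\phi$ is defined and continuous there is a good point to make.
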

\begin{proof}
Let $\bWapproxone \subseteq W_1$ be defined as in Corollary~\ref{cor:approxnormedweightset}. Then, since $\Wapproxone \subseteq \bWapproxone$, we have
\begin{align*}
\Lapprox = \phi \left( \Wapproxone \right) \subseteq \phi \left( \bWapproxone\right). 
\end{align*}
Also, note that $\bWapproxone = \conv \left(\{ \bar{w}^0, \dots, \bar{w}^K \} \right)$, where, for $i,k \in \{0,\dots,K\}$,
\begin{align*}
\bar{w}^k_i = 
\begin{cases} 
1 - \frac{K}{(K+1)!} \cdot c^K, &\text{for } i = k,\\
\frac{1}{(K+1)!} \cdot c^K, &\text{otherwise}.
\end{cases}
\end{align*}
Thus, for any parameter~$\lambda \in \Lapprox$, there exist scalars $\theta_{0}, \theta_1 \dots, \theta_K \in [0,1]$ with $\sum_{k = 0}^K \theta_k = 1$ such that $\lambda = \phi \left( \sum_{k=0}^K \theta_k \bar{w}^k\right)$. Consequently, for $i=1,\dots, K$, both
\begin{align*}
\lambda_i - \lmin_i = \frac{\sum_{k = 0}^K \theta_k \bar{w}_i^k}{\sum_{k = 0}^K \theta_k \bar{w}_0^k} \geq \frac{ \frac{1}{(K+1)!} \cdot c^K \cdot \sum_{k = 0}^K \theta_k}{\left(1 - \frac{K}{(K+1)!}  \cdot c^K \right) \cdot \sum_{k = 0}^K \theta_k} = \frac{c^K}{(K+1)! - K \cdot c^K} \geq \frac{c^K}{(K+1)! }
\end{align*}
and
\begin{align*}
\lambda_i - \lmin_i = \frac{\sum_{k = 0}^K \theta_k \bar{w}_i^k}{\sum_{k = 0}^K \theta_k \bar{w}_0^k} 
\leq \frac{\left(1 - \frac{K}{(K+1)!} \cdot c^K \right) \cdot \sum_{k = 0}^K \theta_k}{ \frac{1}{(K+1)!} \cdot c^K \cdot \sum_{k = 0}^K \theta_k} 
= \frac{(K+1)!}{c^K} - K \leq \frac{(K+1)! }{c^K}
\end{align*}
hold, which shows the claim. 
\end{proof} 
Next, we construct a grid~$\Grid \subseteq \Lambda$ possessing Properties~\ref{property:Cardinality} and~\ref{property:Grid}. That is, the cardinality is polynomially bounded in the encoding length of the instance and $\frac{1}{\varepsilon}$, and computing a $((1 + \frac{\varepsilon}{2}) \cdot \alpha)$-approximation for any $\lambda\in \Lapprox$ is possible by computing an $\alpha$-approximation for each grid point~$\lambda' \in \Grid$.

Let $c = \frac{\varepsilon \cdot \LB}{2 \cdot (1+\varepsilon') \cdot \alpha \dot \UB}$ be defined as in~\eqref{eq:c} with $\varepsilon' = \frac{\varepsilon}{2}$ and $\beta = (1 + \varepsilon') \cdot \alpha$. We employ the bounds on~$\Lapprox$ given by Lemma~\ref{lem:BoundsonLapprox}, and define a lower bound as well as an upper bound by
\begin{align}\label{eq:lbub}
\lb &\colonequals \left\lfloor \log_{1+\frac{\varepsilon}{2}} \frac{c^K}{(K+1)!} \right\rfloor \text { and }
\ub \colonequals \left\lceil \log_{1+\frac{\varepsilon}{2}} \frac{(K+1)!}{c^K} \right\rceil.
\end{align}
We then set
\begin{align}\label{eq:G}
\Grid \colonequals \left\{\Lambda \in \Lambda: \lambda = \left( \lmin_1 + \left(1 + \frac{\varepsilon}{2}\right)^{i_1}, \dots, \lmin_K + \left(1 + \frac{\varepsilon}{2}\right)^{i_K}  \right)^\top, i_k \in \Z, \lb \leq i_k \leq \ub , k = 1, \dots, K \right\}.
\end{align}
Now, Property~\ref{property:Cardinality} can be shown using the construction of~$\Grid$:
\begin{proposition}\label{lem:numberGridPoints}
	Let~$\Grid$ be defined as in~\eqref{eq:G}. Then, 
	\begin{align*}
	|\Grid| \in \mathcal{O} \left( \left( \frac{1}{\varepsilon} \cdot \log \frac{1}{\varepsilon} + \frac{1}{\varepsilon} \cdot \log \frac{\UB}{\LB} + \frac{1}{\varepsilon} \cdot \log \alpha \right)^K \right).
	\end{align*}
\end{proposition}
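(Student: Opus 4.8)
The plan is to bound the number of admissible exponents in each coordinate and then raise that bound to the power $K$, since $\Grid$ is a product set. By construction, each coordinate index $i_k$ ranges over the integers in $[\lb, \ub]$, so $|\Grid| = (\ub - \lb + 1)^K$. Hence everything reduces to estimating
\begin{align*}
\ub - \lb + 1 \leq \log_{1+\frac{\varepsilon}{2}} \frac{(K+1)!}{c^K} - \log_{1+\frac{\varepsilon}{2}} \frac{c^K}{(K+1)!} + 3 = \log_{1+\frac{\varepsilon}{2}} \frac{((K+1)!)^2}{c^{2K}} + 3,
\end{align*}
using $\lceil x \rceil - \lfloor y \rfloor \leq x - y + 2$.

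Next I would handle the change of base in the logarithm. Since $0 < \varepsilon < 1$, we have $\ln(1+\frac{\varepsilon}{2}) \geq \frac{\varepsilon}{2} - \frac{\varepsilon^2}{8} \geq \frac{\varepsilon}{4}$ (or any similar elementary bound), so $\log_{1+\frac{\varepsilon}{2}} z = \frac{\ln z}{\ln(1+\frac{\varepsilon}{2})} \leq \frac{4}{\varepsilon} \ln z$ for $z \geq 1$. Applying this with $z = \frac{((K+1)!)^2}{c^{2K}}$ gives
\begin{align*}
\ub - \lb + 1 \in \mathcal{O}\left( \frac{1}{\varepsilon} \cdot \ln \frac{((K+1)!)^2}{c^{2K}} \right) = \mathcal{O}\left( \frac{1}{\varepsilon} \cdot \left( \log (K+1)! + K \cdot \log \frac{1}{c} \right) \right).
\end{align*}
Now I substitute the definition $c = \frac{\varepsilon \cdot \LB}{2(1+\varepsilon')\alpha \cdot \UB}$ with $\varepsilon' = \frac{\varepsilon}{2}$; since $1 \leq 1+\varepsilon' \leq 2$, we get $\log \frac{1}{c} = \mathcal{O}\left( \log \frac{1}{\varepsilon} + \log \frac{\UB}{\LB} + \log \alpha \right)$. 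Absorbing the $\log(K+1)!$ term into the $\mathcal{O}$-notation (the paper's bound hides the $K$-dependence of the base, treating the exponent $K$ as the only explicit $K$-dependence, consistent with Property~\ref{property:Cardinality} which already notes exponential dependence on $K$), we obtain
\begin{align*}
\ub - \lb + 1 \in \mathcal{O}\left( \frac{1}{\varepsilon} \log \frac{1}{\varepsilon} + \frac{1}{\varepsilon} \log \frac{\UB}{\LB} + \frac{1}{\varepsilon} \log \alpha \right),
\end{align*}
and raising to the power $K$ yields the claimed bound on $|\Grid|$.

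The main obstacle, as far as it is one, is purely bookkeeping: being careful with the ceiling/floor estimates, the base-change inequality for $\log_{1+\varepsilon/2}$, and deciding how the $\log(K+1)!$ and $\log(1+\varepsilon')$ factors are folded into the $\mathcal{O}$-notation. None of this is mathematically deep, but one should state explicitly that the $\mathcal{O}$ suppresses terms depending only on $K$ (so that the $K$-dependence is carried solely by the outer exponent), so that the final expression matches the statement. I would also remark that $\log(K+1)! = \mathcal{O}(K \log K)$ by Stirling if one wants to make the $K$-dependence fully explicit, but this is not needed for the stated form.
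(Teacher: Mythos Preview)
Your proposal is correct and follows essentially the same route as the paper's proof: both compute $|\Grid| = (\ub - \lb + 1)^K$, bound $\ub - \lb + 1$ by $2\log_{1+\varepsilon/2}\frac{(K+1)!}{c^K} + \mathcal{O}(1)$, absorb the $(K+1)!$ factor into the $\mathcal{O}$-constant (the paper does this silently), substitute the definition of $c$, and convert $\log_{1+\varepsilon/2}$ into a $\frac{1}{\varepsilon}\log$ via an elementary lower bound on $\ln(1+\varepsilon/2)$. The only cosmetic difference is the specific inequality used for the base change (you use the Taylor estimate $\ln(1+\varepsilon/2)\geq \varepsilon/4$, the paper uses a convexity argument), which is immaterial.
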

\begin{proof}
We have $|\Grid| = (\ub - \lb +1)^K$, where 
\begin{align*}
\ub - \lb +1 &= \left\lceil \log_{1+\frac{\varepsilon}{2}} \frac{(K+1)!}{c^K} \right\rceil -   \left\lfloor \log_{1+\frac{\varepsilon}{2}} \frac{c^K}{(K+1)!} \right\rfloor + 1 \\
&< 2 \cdot \log_{1 + \frac{\varepsilon}{2}} \frac{(K+1)!}{c^K} + 3 \\
&\in \mathcal{O} \left( \log_{1 + \frac{\varepsilon}{2}} \frac{1}{c}\right) \\
&= \mathcal{O} \left( \log_{1 + \frac{\varepsilon}{2}} \frac{2 \cdot (1 + \frac{\varepsilon}{2}) \cdot \alpha \cdot \UB}{\varepsilon \cdot \LB} \right)\\
&=\mathcal{O} \left( \log_{1 + \frac{\varepsilon}{2}} \frac{4 \cdot \alpha \cdot \UB}{\varepsilon \cdot \LB} \right)\\
&=\mathcal{O} \left( \frac{1}{\varepsilon} \cdot \log \frac{1}{\varepsilon} + \frac{1}{\varepsilon} \cdot \log \frac{\UB}{\LB} + \frac{1}{\varepsilon} \cdot \log \alpha \right),
\end{align*}
since~$0 < \varepsilon < 1$.
Here, note that $a^{\frac{\varepsilon}{2}} = a^{(1 - \frac{\varepsilon}{2}) \cdot 0 + \frac{\varepsilon}{2} \cdot 1} \leq (1 - \frac{\varepsilon}{2}) \cdot a^0 + \frac{\varepsilon}{2} \cdot a^1 = 1 + \frac{\varepsilon}{2}$ by convexity of exponential functions with base $a >0$, which implies $\frac{\varepsilon}{2} \leq \log(1 + \frac{\varepsilon}{2})$.  
\end{proof}

\bigskip
It remains to prove that $\Grid$ indeed satisfies Property~\ref{property:Grid}, for which the main idea is motivated by the approximation of multi-objective optimization problems, cf.~\cite{Papadimitriou+Yannakakis:multicrit-approx}.
\begin{proposition}\label{prop:apprximation regionLambda}
	Let 
    $\bar{\lambda} \in \Lambda$ such that $\bar{\lambda}_i > \lmin_i$ for $i = 1, \dots,K$. If~$x \in X$ is an $\alpha$-approximation for~$\bar{\lambda}$, then~$x$ is a $((1+\frac{\varepsilon}{2}) \cdot \alpha)$-approximation for all parameter vectors
	\begin{align*}
	\lambda \in &\left\{ 
	\lambda' : \bar{\lambda}_k - \lmin_k \leq \lambda'_k - \lmin_k \leq \left(1+\frac{\varepsilon}{2}\right) \cdot (\bar{\lambda}_k - \lmin_k), k=1,\dots,K \right\}.
	\end{align*}
\end{proposition}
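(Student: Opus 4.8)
The plan is to pass to the augmented objective notation and exploit nonnegativity of all the relevant quantities. Write $f(x,\lambda) = F_0(x) + \sum_{k=1}^{K} (\lambda_k - \lmin_k) F_k(x)$ with $F_0(x) = f(x,\lmin) \ge 0$ and $F_k(x) = b_k(x) \ge 0$ (Assumption~\ref{ass:poly}~\ref{ass:polycomputablebounds}), and set $\mu_k \colonequals \lambda_k - \lmin_k$ and $\bar{\mu}_k \colonequals \bar{\lambda}_k - \lmin_k$. The hypothesis on $\lambda$ then reads $\bar{\mu}_k \le \mu_k \le (1 + \frac{\varepsilon}{2})\,\bar{\mu}_k$ for $k = 1,\dots,K$, and all the numbers $F_i(\cdot)$, $\mu_k$, $\bar{\mu}_k$ are nonnegative.

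The key step is a two-sided monotonicity estimate that inflates the objective at $x$ and deflates it at every competitor $x'$. Since $F_k(x) \ge 0$ and $\mu_k \le (1+\frac{\varepsilon}{2})\bar{\mu}_k$, and since $0 \le \frac{\varepsilon}{2} F_0(x)$, one gets
\[
  f(x,\lambda) \;=\; F_0(x) + \sum_{k=1}^{K} \mu_k F_k(x)
  \;\le\; \left(1 + \frac{\varepsilon}{2}\right)\left(F_0(x) + \sum_{k=1}^{K} \bar{\mu}_k F_k(x)\right)
  \;=\; \left(1 + \frac{\varepsilon}{2}\right) f(x,\bar{\lambda}).
\]
Conversely, for any $x' \in X$, since $F_k(x') \ge 0$ and $\bar{\mu}_k \le \mu_k$,
\[
  f(x',\bar{\lambda}) \;=\; F_0(x') + \sum_{k=1}^{K} \bar{\mu}_k F_k(x')
  \;\le\; F_0(x') + \sum_{k=1}^{K} \mu_k F_k(x') \;=\; f(x',\lambda).
\]

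Finally I would chain these two inequalities through the $\alpha$-approximation property of $x$ for $\bar{\lambda}$: for every $x' \in X$,
\[
  f(x,\lambda) \;\le\; \left(1+\frac{\varepsilon}{2}\right) f(x,\bar{\lambda})
  \;\le\; \left(1+\frac{\varepsilon}{2}\right)\alpha\, f(x',\bar{\lambda})
  \;\le\; \left(1+\frac{\varepsilon}{2}\right)\alpha\, f(x',\lambda),
\]
which is exactly the assertion that $x$ is a $\left(\left(1+\frac{\varepsilon}{2}\right)\alpha\right)$-approximation for $\lambda$ (nonnegativity of the $F_i$ also gives $f(\lambda) = \inf_{x' \in X} f(x',\lambda) \ge 0$, so Definition~\ref{def:approximation} applies). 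There is no real obstacle here; the only point requiring care is that the two estimates push in opposite directions and both crucially use that $F_0$ and the $F_k$ are nonnegative. Note that the strict inequalities $\bar{\lambda}_i > \lmin_i$ are not actually needed for this argument — they merely ensure consistency with the grid $\Grid$, whose points satisfy $\lambda'_k = \lmin_k + (1+\frac{\varepsilon}{2})^{i_k} > \lmin_k$.
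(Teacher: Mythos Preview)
Your proof is correct and follows essentially the same route as the paper: both arguments rewrite $f(\cdot,\lambda)$ in the shifted form $F_0(\cdot)+\sum_k(\lambda_k-\lmin_k)F_k(\cdot)$, use the upper bound $\lambda_k-\lmin_k\le(1+\tfrac{\varepsilon}{2})(\bar\lambda_k-\lmin_k)$ together with $F_0\ge 0$ to get $f(x,\lambda)\le(1+\tfrac{\varepsilon}{2})f(x,\bar\lambda)$, use the lower bound $\bar\lambda_k-\lmin_k\le\lambda_k-\lmin_k$ to get $f(x',\bar\lambda)\le f(x',\lambda)$, and then chain through the $\alpha$-approximation inequality at~$\bar\lambda$. (In fact the paper's printed chain has the roles of $x$ and $x'$ swapped, so your write-up is the cleaner of the two.)
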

\begin{proof}
	First let $\lambda \in \R^K$ such that $\bar{\lambda}_k - \lmin_k \leq \lambda_k -\lmin_k \leq (1+\frac{\varepsilon}{2}) \cdot (\bar{\lambda}_k - \lmin_k)$ for $k=1,\dots, K$. Then, for any $x' \in X$, it holds that
	\begin{align*}
		f(x',\lambda) 
		&= f(x',\lmin) + \sum_{k = 1}^K (\lambda_k - \lmin_k) \cdot b_k(x')\\
		&\leq f(x',\lmin) + \sum_{k = 1}^K (1+\frac{\varepsilon}{2}) \cdot (\bar{\lambda}_k - \lmin_k) \cdot b_k(x')\\
		&\leq (1+\frac{\varepsilon}{2}) \cdot \left( f(x',\lmin) + \sum_{k = 1}^K (\bar{\lambda}_k - \lmin_k) \cdot b_k(x') \right) \\
		&= (1+\frac{\varepsilon}{2}) \cdot f(x',\bar{\lambda})
		\leq (1+\frac{\varepsilon}{2}) \cdot \alpha \cdot f(x,\bar{\lambda})  \\
		&= (1+\frac{\varepsilon}{2}) \cdot \alpha \cdot \left( f(x,\lmin) + \sum_{k = 1}^K (\bar{\lambda}_k - \lmin_k) \cdot b_k(x) \right) \\
		&\leq (1+\frac{\varepsilon}{2}) \cdot \alpha \cdot \left( f(x,\lmin) + \sum_{k = 1}^K (\lambda_k - \lmin_k) \cdot b_k(x) \right) = (1+\frac{\varepsilon}{2}) \cdot \alpha \cdot f(x, \lambda).
	\end{align*}
\end{proof}
Note that $\Grid$ is constructed in a way such that, for any parameter vector~$\lambda' \in \Lapprox$, there exists a parameter vector~$\bar{\lambda} \in \Grid$ satisfying $\bar{\lambda}_k - \lmin_k \leq \lambda'_k - \lmin_k \leq (1 + \frac{1}{\varepsilon}) (\bar{\lambda}_k - \lmin_k)$ for $k=1,\dots,K$. Hence, Property~\ref{property:Grid} follows immediately by Proposition~\ref{prop:apprximation regionLambda}. This concludes the discussion of the details. 

\bigskip
\noindent
Our general approximation method for multi-parametric optimization problems is now obtained as follows: Given an instance~$\Pi$, an $\alpha$-approximation algorithm~$\ALG_\alpha$ for the non-parametric version, and~$\varepsilon >0$, we construct the grid~$\Grid$ defined in~\eqref{eq:G}, apply~$\ALG_\alpha$ for each parameter vector~$\lambda \in \Grid$, and collect all solutions in a set~$S$. Since, as shown before, Properties~\ref{property:LApprox}--\ref{property:Grid} hold true, the set~$S$ is indeed a $((1+\varepsilon)\cdot \alpha)$-approximation set. Algorithm~\ref{alg:gridapproach} summarizes the method.

\begin{algorithm}[H]
	\SetKw{Compute}{compute}
	\SetKw{Break}{break}
	\SetKwInOut{Input}{input}\SetKwInOut{Output}{output}
	\SetKwComment{command}{right mark}{left mark}
	
	\Input{An instance~$\Pi$ of a multi-parametric optimization problem, $\varepsilon >0$, an $\alpha$-approximation algorithm~$\ALG_\alpha$ for the non-parametric version of~$\Pi$.}
	
	\Output{A $((1+\varepsilon) \cdot \alpha)$-approximation set for~$\Pi$.}
	
	\BlankLine
	Compute $\LB$ and $\UB$.
	
	Compute $\LB$ and $\UB$.
	
	$\varepsilon' \leftarrow \frac{\varepsilon}{2}$.
	
	$\beta \leftarrow (1 + \varepsilon') \cdot \alpha$.
	
	Set $c$ as in \eqref{eq:c}
	
	Set $\lb$, $\ub$ as in \eqref{eq:lbub}
	
	Set $\Grid$ as in \eqref{eq:G}.
	
	\For{$\lambda \in \Grid$}{
		
		$x \leftarrow  \ALG_{\alpha}(\lambda)$\\
		
		$S \leftarrow S \cup \{x\}$ \\
	}
	\Return $S$.
	\caption{Grid approach for the approximation of multi-parametric optimization problems.}
	\label{alg:gridapproach}
\end{algorithm}

\begin{theorem}\label{th:runningtime}
	Algorithm~\ref{alg:gridapproach} returns a $((1 + \varepsilon) \cdot \alpha)$-approximation set~$S$
	in time$$ \mathcal{O} \left( T_{\LB / \UB} + T_{\ALG_{\alpha}} \cdot  \left( \frac{1}{\varepsilon} \cdot \log \frac{1}{\varepsilon} + \frac{1}{\varepsilon} \cdot \log \frac{\UB}{\LB} + \frac{1}{\varepsilon} \cdot \log \alpha \right)^K \right),$$
	where $T_{\LB / \UB}$ denotes the time needed for computing the bounds~$\LB$ and~$\UB$, and $T_{\ALG_{\alpha}}$ denotes the running time of~$\ALG_{\alpha}$.
\end{theorem}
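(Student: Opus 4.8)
The plan is to verify correctness and running time separately, using the results assembled earlier in the section. For correctness, I would chain together Properties~\ref{property:LApprox}, \ref{property:Cardinality}, and~\ref{property:Grid} exactly as outlined before the statement. Fix $\varepsilon' = \frac{\varepsilon}{2}$ and $\beta = (1+\varepsilon')\cdot\alpha$, and let $c$, $\lb$, $\ub$, and $\Grid$ be as set in lines~5--7 of Algorithm~\ref{alg:gridapproach}. Take any $\lambda \in \Lambda$ for which $f(\lambda) \geq 0$. If $\lambda \notin \Lapprox$, Corollary~\ref{cor:LapproxApproximationGuarantee} (applied with $\varepsilon' = \frac{\varepsilon}{2}$ and $\beta = (1+\frac{\varepsilon}{2})\cdot\alpha$, which is the specialization yielding Property~\ref{property:LApprox}) produces a $\lambda' \in \Lapprox$ such that any $((1+\frac{\varepsilon}{2})\cdot\alpha)$-approximation for $\lambda'$ is a $((1+\frac{\varepsilon}{2})\cdot\alpha + \frac{\varepsilon}{2})$-approximation for $\lambda$, and the elementary estimate $(1+\frac{\varepsilon}{2})\cdot\alpha + \frac{\varepsilon}{2} \leq (1+\varepsilon)\cdot\alpha$ (using $\alpha \geq 1$) upgrades this to a $((1+\varepsilon)\cdot\alpha)$-approximation. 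If $\lambda \in \Lapprox$, simply set $\lambda' \colonequals \lambda$. In either case $\lambda' \in \Lapprox$, so by Lemma~\ref{lem:BoundsonLapprox} we have $\lambda'_k - \lmin_k \in [\frac{c^K}{(K+1)!}, \frac{(K+1)!}{c^K}]$ for each $k$, and hence by the choice of $\lb, \ub$ in~\eqref{eq:lbub} there is an integer $i_k$ with $\lb \leq i_k \leq \ub$ and $(1+\frac{\varepsilon}{2})^{i_k} \leq \lambda'_k - \lmin_k \leq (1+\frac{\varepsilon}{2})^{i_k+1}$; collecting these coordinates gives a grid point $\bar\lambda \in \Grid$ with $\bar\lambda_k - \lmin_k \leq \lambda'_k - \lmin_k \leq (1+\frac{\varepsilon}{2})(\bar\lambda_k - \lmin_k)$ for all $k$.

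Now, since $\ALG_\alpha$ is called on $\bar\lambda$ during the loop, the set $S$ contains some $x = \ALG_\alpha(\bar\lambda)$ that is an $\alpha$-approximation for $\bar\lambda$. Proposition~\ref{prop:apprximation regionLambda} then shows $x$ is a $((1+\frac{\varepsilon}{2})\cdot\alpha)$-approximation for $\lambda'$, and therefore — by the reduction of the first paragraph when $\lambda \neq \lambda'$, or trivially when $\lambda = \lambda'$ — a $((1+\varepsilon)\cdot\alpha)$-approximation for $\lambda$. As $\lambda$ was an arbitrary feasible parameter vector with $f(\lambda) \geq 0$, this proves $S$ is a $((1+\varepsilon)\cdot\alpha)$-approximation set for $\Pi$. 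One technical point to be careful about here is the boundary case where some $\bar\lambda_k = \lmin_k$ (equivalently $\lambda'_k = \lmin_k$); Lemma~\ref{lem:BoundsonLapprox} gives a strictly positive lower bound on $\lambda'_k - \lmin_k$, so this does not occur for $\lambda' \in \Lapprox$, and the hypothesis $\bar\lambda_i > \lmin_i$ of Proposition~\ref{prop:apprximation regionLambda} is met.

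For the running time, the bounds $\LB$, $\UB$ are computed in time $T_{\LB/\UB}$ by Assumption~\ref{ass:poly}~\ref{ass:polycomputablebounds} (the duplicated line in the pseudocode is harmless). Computing $c$, $\lb$, $\ub$, and enumerating $\Grid$ costs time polynomial in the encoding lengths of these quantities, which is dominated by $|\Grid|$; the loop performs exactly $|\Grid|$ calls to $\ALG_\alpha$, each costing $T_{\ALG_\alpha}$, plus $\mathcal{O}(1)$ set-insertion work per call. By Proposition~\ref{lem:numberGridPoints}, $|\Grid| \in \mathcal{O}\big( \big( \frac{1}{\varepsilon}\log\frac{1}{\varepsilon} + \frac{1}{\varepsilon}\log\frac{\UB}{\LB} + \frac{1}{\varepsilon}\log\alpha \big)^K \big)$, which yields the stated bound $\mathcal{O}\big( T_{\LB/\UB} + T_{\ALG_\alpha}\cdot \big( \frac{1}{\varepsilon}\log\frac{1}{\varepsilon} + \frac{1}{\varepsilon}\log\frac{\UB}{\LB} + \frac{1}{\varepsilon}\log\alpha \big)^K \big)$.

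I do not expect a genuine obstacle: essentially all the work has been front-loaded into Propositions~\ref{prop:WapproxForAugmentedProblem}, \ref{lem:numberGridPoints}, \ref{prop:apprximation regionLambda} and the corollaries. The only things requiring care are bookkeeping — correctly threading the values $\varepsilon' = \frac{\varepsilon}{2}$, $\beta = (1+\frac{\varepsilon}{2})\alpha$ through the earlier lemmas so the composition of approximation factors telescopes to exactly $(1+\varepsilon)\cdot\alpha$, and confirming that every $\bar\lambda$ produced by the rounding step genuinely lies in $\Grid$ (i.e.\ that the exponents $i_k$ fall within $[\lb,\ub]$, which is precisely why the bounds of Lemma~\ref{lem:BoundsonLapprox} were derived).
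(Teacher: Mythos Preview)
Your proposal is correct and follows essentially the same route as the paper's own proof: invoke Corollary~\ref{cor:LapproxApproximationGuarantee} to pass from an arbitrary $\lambda\in\Lambda$ to some $\lambda'\in\Lapprox$, use Lemma~\ref{lem:BoundsonLapprox} to round $\lambda'$ down to a grid point $\bar\lambda\in\Grid$, apply Proposition~\ref{prop:apprximation regionLambda} to lift the $\alpha$-approximation at $\bar\lambda$ to a $((1+\tfrac{\varepsilon}{2})\alpha)$-approximation at $\lambda'$, and read off the running time from Proposition~\ref{lem:numberGridPoints}. Your write-up is in fact slightly more careful than the paper's (you split the cases $\lambda\in\Lapprox$ versus $\lambda\notin\Lapprox$ explicitly and verify the strict-positivity hypothesis of Proposition~\ref{prop:apprximation regionLambda}), but the argument is the same.
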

\begin{proof}
By Lemma \ref{lem:numberGridPoints}, the number of iterations and, thus, the number of calls to $\ALG_{\alpha}$ is asymptotically bounded by 
$$\mathcal{O} \left( \left( \frac{1}{\varepsilon} \cdot \log \frac{1}{\varepsilon} + \frac{1}{\varepsilon} \cdot \log \frac{\UB}{\LB} + \frac{1}{\varepsilon} \cdot \log \alpha \right)^K \right).$$

Now, it remains to show that the set~$S$ returned by the algorithm is a $((1+ \varepsilon) \cdot \alpha)$-approximation set, i.e., that, for each parameter vector~$\lambda \in \Lambda$, there exists a parameter vector~$\bar{\lambda} \in \Grid$ such that any $\alpha$-approximation for $\bar{\lambda}$ is a $((1+\varepsilon) \cdot \alpha)$-approximation for~$\lambda$.
Let $\lambda \in \Lambda$ be a parameter vector. By Corollary~\ref{cor:LapproxApproximationGuarantee}, there exists a parameter vector $\lambda' \in \Lapprox$ such that any $((1+\frac{\varepsilon}{2}) \cdot \alpha)$-approximation for $\lambda'$ is a $((1 + \frac{\varepsilon}{2})\cdot \alpha + \frac{\varepsilon}{2})$-approximation, and thus a $((1+\varepsilon)\cdot \alpha)$-approximation for $\lambda$. We set
$$\bar{\lambda}_i \colonequals \lmin_i + (1 + \varepsilon)^{m_i} \text{ with } m_i \colonequals \lfloor \log_{1 + \varepsilon'} (\lambda'_i - \lmin_i) \rfloor.$$
Then, by Lemma~\ref{lem:BoundsonLapprox}, we have $\lb \leq m_i \leq \ub$ for $i = 1, \dots, K$ and, thus, $\bar{\lambda} \in \Grid$. Moreover, $\bar{\lambda}_i -\lmin_i \leq \lambda'_i - \lmin_i \leq (1+\frac{\varepsilon}{2}) \cdot (\bar{\lambda}_i - \lmin_i)$ and, hence, any $\alpha$-approximation for $\bar{\lambda}$ is a $((1+\frac{\varepsilon}{2})\cdot \alpha)$-approximation for $\lambda'$ by Proposition~\ref{prop:apprximation regionLambda}. This concludes the proof.
\end{proof}

\noindent
In particular, Theorem~\ref{th:runningtime} yields:
\begin{corollary}\label{cor:fptas}
	Algorithm~\ref{alg:gridapproach} yields a FPTAS if either an exact algorithm~$\ALG_{1}$ or an FPTAS is available for the non-parametric version of $\Pi$. If a PTAS is available for the non-parametric version, Algorithm~\ref{alg:gridapproach} yields a PTAS.\end{corollary}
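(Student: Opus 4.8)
The statement follows directly from Theorem~\ref{th:runningtime}: Algorithm~\ref{alg:gridapproach} invokes the non-parametric algorithm~$\ALG_\alpha$ only as a black box and, for any input precision~$\varepsilon\in(0,1)$, returns a $((1+\varepsilon)\cdot\alpha)$-approximation set within the running time stated there. The plan is therefore to instantiate~$\ALG_\alpha$ suitably in each of the three regimes and to check that the resulting running time has the required form, treating the number of parameters~$K$ as a fixed constant (recall that $|\Grid|$ --- and hence the running time in Theorem~\ref{th:runningtime} --- is only polynomial in the instance size and~$\frac{1}{\varepsilon}$ for fixed~$K$).

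Before the case distinction I would record two standard facts used throughout. First, by Assumption~\ref{ass:poly}~\ref{ass:polycomputable} and~\ref{ass:polycomputablebounds}, the bounds~$\LB$ and~$\UB$ are positive rationals of encoding length polynomial in the instance size, so both $T_{\LB/\UB}$ and $\log\frac{\UB}{\LB}$ are polynomial in the instance size. Second, for $0<\delta<1$ one has $\frac{1}{\delta}\log(1+\delta)\le 1$, using the inequality $\log(1+\delta)\le\delta$ already invoked in the proof of Proposition~\ref{lem:numberGridPoints}.

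For the case that a polynomial-time exact algorithm~$\ALG_1$ is available, set $\alpha=1$: then Algorithm~\ref{alg:gridapproach} returns a $(1+\varepsilon)$-approximation set, and in Theorem~\ref{th:runningtime} the term $\frac{1}{\varepsilon}\log\alpha$ vanishes, so the running time is polynomial in the instance size and~$\frac{1}{\varepsilon}$; as $\varepsilon\in(0,1)$ is arbitrary, this is an FPTAS. For the case that an FPTAS $(\ALG_{1+\delta})_{\delta>0}$ for the non-parametric version is available, given a target precision~$\varepsilon'\in(0,1)$ I would run Algorithm~\ref{alg:gridapproach} with precision~$\varepsilon\colonequals\varepsilon'/3$ and with $\ALG_\alpha\colonequals\ALG_{1+\delta}$ for $\delta\colonequals\varepsilon'/3$, so that $\alpha=1+\varepsilon'/3$ and the output is a $((1+\varepsilon'/3)^2)$-approximation set; since $(1+\varepsilon'/3)^2\le 1+\varepsilon'$ for $\varepsilon'\in(0,1)$, this is a $(1+\varepsilon')$-approximation set. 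The running time is polynomial in the instance size and~$\frac{1}{\varepsilon'}$ because $T_{\ALG_{1+\delta}}$ is polynomial in the instance size and~$\frac{1}{\delta}=\frac{3}{\varepsilon'}$, while $\frac{1}{\varepsilon}\log\alpha=\frac{3}{\varepsilon'}\log(1+\varepsilon'/3)\le 1$ by the second fact and the remaining terms of Theorem~\ref{th:runningtime} are polynomial in the instance size and~$\frac{1}{\varepsilon'}$ by the first fact. The PTAS case is handled identically, except that now $\varepsilon=\delta=\varepsilon'/3$ is a fixed constant, whence $T_{\ALG_{1+\delta}}$ and all remaining terms are polynomial in the instance size alone.

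I do not expect a genuine obstacle here --- the argument is essentially bookkeeping on top of Theorem~\ref{th:runningtime} --- and the only points requiring a little care are (i)~coupling the internal precision~$\delta$ of the non-parametric scheme with the grid precision~$\varepsilon$ so that the product guarantee $(1+\varepsilon)\cdot(1+\delta)$ meets the prescribed target while the term $\frac{1}{\varepsilon}\log\alpha$ in Theorem~\ref{th:runningtime} stays bounded, and (ii)~being explicit that ``polynomial time'' is meant for a fixed number~$K$ of parameters, since the relevant factor grows like the $K$-th power of $\frac{1}{\varepsilon}\bigl(\log\frac{1}{\varepsilon}+\log\frac{\UB}{\LB}+\log\alpha\bigr)$.
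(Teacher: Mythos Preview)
Your proposal is correct and follows essentially the same approach as the paper: both derive the corollary directly from Theorem~\ref{th:runningtime} by coupling the grid precision with the precision of the non-parametric scheme so that the product $(1+\varepsilon)(1+\delta)$ meets the target. The only cosmetic difference is the specific coupling --- the paper sets $\delta\colonequals\sqrt{1+\varepsilon}-1$ (so that $(1+\delta)^2=1+\varepsilon$ exactly), whereas you use $\varepsilon=\delta=\varepsilon'/3$ and the cruder estimate $(1+\varepsilon'/3)^2\le 1+\varepsilon'$; both choices yield the same asymptotic running time, and your explicit remarks about the $\frac{1}{\varepsilon}\log\alpha$ term and the fixed-$K$ caveat are welcome additions.
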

\begin{proof}
If an exact algorithm~$\ALG_{1}$ is available, the statement directly follows from Theorem~\ref{th:runningtime}. Otherwise, for any $\varepsilon>0$, set~$\delta \colonequals \sqrt{1+ \varepsilon} -1$. Then, by Theorem~\ref{th:runningtime}, we can compute a $(1+\varepsilon)$-approximation set in time 
$$ \mathcal{O} \left( T_{\LB / \UB} + T_{\ALG_{1 + \delta}} \cdot  \left( \frac{1}{\delta} \cdot \log \frac{1}{\delta} + \frac{1}{\delta} \cdot \log \frac{\UB}{\LB} \right)^K \right).$$ 
\end{proof}

\section{Minimum-Cardinality Approximation Sets}\label{sec:minCard}
In this section, the task of finding a $\beta$-approximation set~$S^*$ with minimum cardinality is investigated. It is stated in~\cite{Vassilvitskii+Yannakakis:trade-off-curves} that no constant approximation factor on the cardinality of~$S^*$ can be achieved in general for multi-parametric optimization problems with positive parameter set and positive, polynomial-time computable functions~$a, b_k$ if only $(1+\delta)$-approximation algorithms for $\delta>0$ are available for the non-parametric problem. Thus, the negative result also holds in the more general case considered here.
\begin{theorem}
	\label{th:notccompetitive}
	For any~$\beta > 1$ and any integer~$L \in \N$, there does not exist an algorithm that computes a $\beta$-approximation set~$S$ such that $|S| < L \cdot |S^*|$ for every $3$-parametric minimization problem and generates feasible solutions only by calling~$\ALG_{1 + \delta}$ for values of~$\delta>0$ such that~$\frac{1}{\delta}$ is polynomially bounded in the encoding length of the input. 
\end{theorem}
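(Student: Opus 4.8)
The plan is to argue by contradiction, via an adversary argument in the spirit of the impossibility result of Diakonikolas et al.~\cite{Diakonikolas2011approximation,Diakonikolas2008succinct}, carried over from the multi-objective to the multi-parametric setting. Suppose such an algorithm~$\A$ existed. Since~$\A$ produces feasible solutions only through calls to~$\ALG_{1+\delta}$, its output~$S$ is a function of the sequence of solutions returned by these calls, and every element of~$S$ occurs in that sequence. It would therefore suffice to construct a single $3$-parametric minimization instance (respecting Assumption~\ref{ass:poly}, i.e.\ with parameter set $\bigtimes_{k=1}^{3}[\lmin_k,\infty)$ and nonnegative, polynomially encodable data) together with a strategy for answering the oracle calls that is a legitimate $(1+\delta)$-oracle for that instance, such that no possible output~$S$ of~$\A$ can be simultaneously a valid $\beta$-approximation set and satisfy $|S|<L\cdot|S^*|$.

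The instance would be built from two families of feasible solutions. A family of ``generic'' solutions $z_1,\dots,z_N$ would be chosen so that their (nonnegative, monotone, affine) cost functions have a concave piecewise-linear lower envelope~$f$, which is then the optimal cost curve, and so that~$f$ is laid out --- using all three parameter directions, in a recursive, multi-scale manner --- in such a way that every $\beta$-approximation set consisting only of the~$z_i$ has cardinality at least $L\cdot M$ for a suitable parameter~$M$. In addition, a ``hidden'' bundle of~$M$ solutions would be added whose cost functions lie on or above~$f$ everywhere (so that they never lower the optimal cost curve and are never the unique optimum of any non-parametric problem~$\Pi(\lambda)$) but lie strictly below $\beta\cdot f$ on carefully chosen regions, so that this bundle as a whole is a valid $\beta$-approximation set; this gives $|S^*|\le M$. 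The crucial point is that, because the hidden solutions are never uniquely optimal, the oracle~$\ALG_{1+\delta}$ can --- no matter how small~$\delta$ is --- legitimately answer \emph{every} call of~$\A$ with a generic solution~$z_i$ that is exactly optimal at the queried parameter vector; hence $\A$ only ever sees generic solutions and $S\subseteq\{z_1,\dots,z_N\}$. It is precisely the $(1+\delta)$-slack that makes it possible for the hidden bundle to be so much more economical than the generic solutions while remaining invisible to the oracle; with an exact oracle this freedom is far more limited, which is consistent with the weaker ``factor-$K$'' impossibility obtainable in that case.

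Given such an instance, running~$\A$ would yield a set $S\subseteq\{z_1,\dots,z_N\}$. If~$S$ is a valid $\beta$-approximation set, then by construction $|S|\ge L\cdot M$, and since $|S^*|\le M$ this gives $|S|\ge L\cdot|S^*|$, contradicting the guarantee of~$\A$. If instead $|S|<L\cdot M$, then~$S$ is not a valid $\beta$-approximation set, contradicting the correctness of~$\A$. Either case yields a contradiction, proving the theorem. The main obstacle will be the construction itself: one has to design the affine cost functions so as to simultaneously (i)~force every $\beta$-approximation set built from the~$z_i$ alone to have cardinality $\Omega(L\cdot M)$, (ii)~exhibit a size-$M$ bundle that is a genuine $\beta$-approximation set while never dropping below~$f$, and (iii)~do this with only $K=3$ parameters and nonnegative, polynomially encodable coefficients. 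Reconciling (i) and (ii) is the delicate part --- for a naively curved cost curve the generic solutions are already as efficient as any solution lying above~$f$, so nothing is hidden --- which is why a recursive, multi-scale layout that turns the $(1+\delta)$-slack at fine scales into $\beta$-level ambiguity at coarse scales is needed; one also has to verify that the whole argument is robust to~$\A$ choosing~$\delta$ arbitrarily small, subject only to $1/\delta$ being polynomially bounded in the encoding length.
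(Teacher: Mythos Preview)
Your plan contains a genuine gap, and it stems from a single sentence: you require the hidden bundle to ``lie on or above~$f$ everywhere'' so that the oracle can always return an exactly optimal generic~$z_i$, ``no matter how small~$\delta$ is''. But then your adversary is a perfectly legal realization of~$\ALG_1$ as well (it always returns an exact optimizer, breaking ties towards the~$z_i$), so your argument never uses the hypothesis that $1/\delta$ is only polynomially bounded. If the construction worked, it would prove that no algorithm using~$\ALG_1$ can be $L$-competitive for arbitrary~$L$ --- a statement strictly stronger than what you are asked to prove and in tension with the paper's own discussion that, with an exact oracle, only a factor~$K+1$ lower bound is obtained. In fact the obstruction is real: if every hidden solution~$h_j$ satisfies $h_j\ge f$, then each~$h_j$ is an affine function sandwiched in the band $[f,\beta f]$, and so is every~$z_i$ on the region where it is a $\beta$-approximation; one can check already for $K=1$ that an affine function lying in $[f,\beta f]$ over an interval can be replaced by at most two of the~$z_i$ without losing coverage, so the ratio between ``cheapest cover by the~$z_i$'' and ``cheapest cover by affine functions above~$f$'' is bounded by a constant depending on~$K$, not by an arbitrary~$L$. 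Your remark that ``the $(1+\delta)$-slack is what makes the hidden bundle so much more economical'' is therefore inconsistent with your own construction, which gives the oracle no slack at all.

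The paper's proof exploits the polynomial bound on~$1/\delta$ in an essential and different way. It builds \emph{two} instances~$A^1$ and~$A^2$ sharing the solutions $x^*,x^1,\dots,x^L$; in~$A^2$ there are additional solutions~$\bar{x}^\ell$ that strictly dominate the~$x^\ell$ (so they \emph{do} lower the optimal cost curve), but only by an exponentially small margin~$1/z_0$. Because of this margin, for every weight and every~$\delta$ with $1/\delta$ polynomial, some solution in $\{x^*,x^1,\dots,x^L\}$ is still a valid $(1+\delta)$-answer in~$A^2$, so the oracle can answer identically on both instances and the algorithm cannot tell them apart. One then shows $|S^*|=1$ for~$A^1$ but $|S^*|=L+1$ for~$A^2$, which yields the contradiction. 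The crucial difference from your plan is that the ``hidden'' solutions are strictly \emph{below}~$f$ in~$A^2$, and it is precisely the polynomial cap on~$1/\delta$ that prevents the algorithm from detecting them.
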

We remark that the corresponding proof (published in~\cite{Diakonikolas2011approximation}, Theorem 5.4.12) is imprecise, but the idea remains valid with a more careful construction. We provide a counterexample and a correction of the proof in the appendix.\\

Note that this result does not rule out the existence of a method that achieves a constant factor if a polynomial-time \emph{exact} algorithm~$\ALG_{1}$ is available. We now show that, in this case, there cannot exist a method that yields an approximation factor smaller than~$K+1$ on the cardinality of~$S^*$. 
\begin{theorem}
	For any $\beta>1$ and $K \in \N$, there does not exist an algorithm that computes a $\beta$-approximation set~$S$ with $|S| < (K+1) \cdot |S^*|$ for every $K$-parametric minimization problem and generates feasible solutions only by calling~$\ALG_{1}$.
\end{theorem}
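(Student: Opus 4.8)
The plan is to construct, for each natural number~$K$, a family of $K$-parametric minimization instances on which any algorithm relying only on calls to an exact oracle~$\ALG_{1}$ is forced to output at least $(K+1)$ times as many solutions as the optimal $\beta$-approximation set~$S^*$, no matter how $\beta>1$ is chosen. The key idea, inherited from the impossibility results in~\cite{Diakonikolas2011approximation,Diakonikolas2008succinct} but pushed to the exact-oracle setting, is an adversary argument: we design the instance so that an exact oracle query at \emph{any} weight vector is answered by a single fixed ``universal'' solution~$x^{0}$ that is optimal everywhere on~$\Lambda$, yet the true $\beta$-approximation structure requires $K+1$ distinct solutions, one near each of the $K+1$ ``extreme'' regimes of the parameter simplex. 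Thus the algorithm, seeing only~$x^{0}$ returned regardless of its queries, cannot distinguish this instance from one in which the $K+1$ ``hidden'' solutions $x^{1},\dots,x^{K+1}$ are absent; in the latter $\{x^{0}\}$ alone is a valid $\beta$-approximation set (so $|S^{*}|=1$), while in the former $|S^{*}|=1$ still holds because $x^{0}$ is exactly optimal everywhere — hence even a single solution suffices as a $\beta$-approximation set, and the algorithm is \emph{forced to guess} which of many equally-plausible non-oracle solutions to add.

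More concretely, I would build the instance so that the feasible set is $X=\{x^{0},x^{1},\dots,x^{K+1}\}$ together with the functions $F_{0},\dots,F_{K}$ of the augmented problem~\eqref{eq:augmentedmultiparam}. Choose the values so that $x^{0}$ satisfies $\sum_{i=0}^{K} w_{i}F_{i}(x^{0}) \le \sum_{i=0}^{K} w_{i}F_{i}(x^{j})$ for every $j$ and every $w\in\R^{K+1}_{\geqq}$ — e.g.\ set $F_{i}(x^{0})$ very small in every coordinate, and make each $x^{j}$ optimal (up to a factor just above~$\beta$) only on a tiny neighbourhood of the $j$-th vertex $e_{j}$ of the simplex $W_{1}$, while being far worse than~$\beta$ times $x^{0}$ there in fact — the point being that in the ``present'' instance no $x^{j}$ is ever returned by $\ALG_{1}$ because $x^{0}$ strictly dominates. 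The subtlety is to arrange the numbers so that $\{x^{0}\}$ is simultaneously (i) the unique output of every exact-oracle call, and (ii) \emph{not} a $\beta$-approximation set for the present instance, forcing $|S|$ returned by the algorithm to include at least $K+1$ further solutions. This is the delicate part: we need $x^{0}$ to be exactly optimal on the ``bulk'' of $\Lambda$ but to fail the $\beta$-bound near each of the $K+1$ vertices, where instead some $x^{j}$ (with $j$ possibly~$0$, reindexing) must be the $\beta$-approximator — and those $K+1$ required approximators must be pairwise distinct and non-retrievable via $\ALG_{1}$. The standard trick is to place, near vertex $e_{j}$, a solution $x^{j}$ whose objective is only slightly below $\beta\cdot f(x^{0},\cdot)$ there, so $x^{j}$ alone covers vertex~$j$, but $\ALG_{1}$ returns $x^{0}$ there since $f(x^{0},\cdot) < f(x^{j},\cdot)$ exactly.

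For the lower bound on $|S^{*}|$, I would verify that near each vertex~$e_{j}$ of~$W_{1}$ (equivalently, in $K+1$ pairwise-far regions of~$\Lambda$ under the map~$\phi$ — taking care of the $w_{0}=0$ vertex separately via the cone structure of the augmented problem, or by working directly in the weight simplex), the \emph{only} feasible solution that $\beta$-approximates the local non-parametric problem is the corresponding $x^{j}$; this gives $|S^{*}|\ge K+1$. Then, run the algorithm: since every call to $\ALG_{1}$ returns $x^{0}$ (and $x^{0}$ covers none of the $K+1$ critical regions), the only solutions the algorithm can place in~$S$ besides the oracle output are ones it must \emph{fabricate without oracle support} — but by Assumption~\ref{ass:poly}\ref{ass:polyExistenceALG} it ``generates feasible solutions only by calling~$\ALG_{1}$'', so it cannot produce any $x^{j}$ at all, giving $|S|\le 1 < (K+1)\cdot 1 \le \dots$; wait — to make the \emph{ratio} argument clean I would instead make $|S^{*}|=1$ by ensuring one of the $x^{j}$'s simultaneously covers everything, so the adversary can answer consistently with both a ``$|S^{*}|=1$'' world and a ``$|S^{*}|\ge K+1$'' world. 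Concretely: present two indistinguishable instances, one where a single hidden solution $x^{*}$ is $\beta$-optimal everywhere (so $|S^{*}|=1$) and one where no such solution exists and $|S^{*}|\ge K+1$, both answering every $\ALG_{1}$ query identically; the algorithm must output a set valid for both, forcing $|S|\ge K+1 = (K+1)\cdot 1 = (K+1)|S^{*}|$ in the first instance, contradicting $|S|<(K+1)|S^{*}|$.

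The main obstacle is the simultaneous balancing of the numerical gaps: the $K+1$ vertex-solutions must each be \emph{exactly} beaten by the oracle's answer (so $\ALG_{1}$ never reveals them) yet be \emph{necessary} for $\beta$-approximation (so $|S^{*}|$ is large in the ``bad'' instance), and the whole configuration must survive under the translation between the parameter set~$\Lambda$, the weight simplex~$W_{1}$, and the cone~$\R^{K+1}_{\geqq}$, including the boundary vertex $e_{0}=(1,0,\dots,0)$ which corresponds to $\lambda=\lmin$ rather than an interior point. I would handle the geometry by arguing directly on $W_{1}$ (using Observation~\ref{lem:multapproxinvariant} to pass freely between the cone and the simplex and Lemma~\ref{prop:alphaconvex} to reduce ``covering a region'' to ``covering its vertices''), and handle the numerics by choosing the $F_{i}(x^{j})$ as explicit rationals parameterised by a large integer~$N$ with $N\to\infty$ so that the ``exactly beaten but $\beta$-necessary'' conditions become asymptotic inequalities that are easy to certify.
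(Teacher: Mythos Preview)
Your proposal contains a genuine structural error in the role assignment of the solutions, and you yourself notice it midway (``wait ---''). The core contradiction is this: you want $x^{0}$ to be the unique output of $\ALG_{1}$ at \emph{every} weight, i.e.\ $x^{0}$ is exactly optimal for every $\lambda\in\Lambda$. But then $\{x^{0}\}$ is a $1$-approximation set and hence a $\beta$-approximation set for any $\beta\ge 1$, so $|S^{*}|=1$ and the algorithm simply outputs $S=\{x^{0}\}$, giving $|S|=1<(K+1)\cdot|S^{*}|$. Your conditions (i) ``unique output of every exact-oracle call'' and (ii) ``not a $\beta$-approximation set'' are mutually exclusive: an everywhere-optimal solution is automatically an everywhere $\beta$-approximation.

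The paper's construction reverses the roles, and this reversal is the missing idea. It builds a single instance with feasible set $\{x,x^{0},\dots,x^{K}\}$ where the \emph{hidden} solution~$x$ is the one that alone forms a $\beta$-approximation set (so $|S^{*}|=1$), while $x$ is strictly \emph{sub}optimal at every weight and therefore never returned by $\ALG_{1}$. Concretely, $F_{i}(x)=(K+1)\beta$ for all~$i$, and $F_{i}(x^{i})=K+1$, $F_{j}(x^{i})=(K+2)\beta-1$ for $j\ne i$. One checks directly that (a)~for every $w\in W_{1}$ some $x^{i}$ beats $x$ strictly, so $\ALG_{1}$ never outputs~$x$; (b)~$x$ is a $\beta$-approximation for every~$w$; and (c)~at weights near the unit vector~$e^{i}$, only $x^{i}$ (among $x^{0},\dots,x^{K}$) is a $\beta$-approximation. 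Hence any algorithm restricted to $\ALG_{1}$-outputs must return all of $x^{0},\dots,x^{K}$, giving $|S|\ge K+1=(K+1)|S^{*}|$. No two-instance adversary is needed; a single explicit instance with these rational values suffices, and the three verifications are short direct calculations.

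Your late pivot to a two-instance indistinguishability argument is salvageable in principle, but as written it still has the roles confused (you need the oracle-accessible solutions to be the ones requiring large cardinality, not the hidden one), and it is more complicated than necessary.
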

\begin{proof}
Let $\beta > 1$. In the following, an instance of the augmented multi-parametric optimization problem with parameter set given by the bounded $K$-dimensional simplex~$W_1$ is constructed such that the minimum-cardinality $\beta$-approximation set~$S^*$ has cardinality one, but the unique solution~$x \in S^*$ cannot be obtained by~$\ALG_{1}$, and any other $\beta$-approximation set must have cardinality greater than or equal to~$K+1$. Consider an instance with $X = \{x, x^0,\dots,x^K\}$ such that
\begin{align*}
F_i(x) &=(K+1) \cdot \beta \text{ for } i = 0,\dots,K,
\end{align*}
and, for $i = 0,\dots,K$,
\begin{align*}
F_i(x^i) &= K+1 \text{ and }
F_j(x^i) = (K+2) \cdot \beta - 1 \text{ for }j \neq i.
\end{align*}
We show that the solution~$x$ cannot be obtained via~$\ALG_{1}$, the set~$\{x\}$ a $\beta$-approximation set, and the only $\beta$-approximation set that does not contain~$x$ is $\{x^0, \dots, x^K\}$ with cardinality~$K+1$. 

First, we show that~$x$ cannot be obtained via~$\ALG_{1}$. For any~$w \in W_1$, there exists an index~$i \in \{0,\dots,K\}$ such that $w_i \geq \frac{1}{K+1}$ and, thus,
\begin{align*}
w^\top F(x^i) &= (K+1) \cdot w_i + ( (K+2) \cdot \beta - 1) \cdot (1 - w_i) \\
&= (K+2) \cdot \beta - 1 + (K+1 - (K+2) \cdot \beta + 1) \cdot w_i \\
&= (K+2) \cdot \beta - 1 + (K+2 - (K+2) \cdot \beta ) \cdot w_i \\
&\leq (K+2) \cdot \beta - 1 + \frac{K+2}{K+1} \cdot (1 - \beta) \\
&= (K+2) \cdot \beta - 1 + 1- \beta + \frac{1}{K+1} \cdot (1 - \beta) \\
&= (K+1) \cdot \beta + \frac{1}{K+1} \cdot (1 - \beta) \\
&< (K+1) \cdot \beta\\
&= w^\top F(x).
\end{align*}
Hence, the solution~$x$ cannot be obtained via~$\ALG_{1}$. Next, we show that the set~$\{x\}$ is a $\beta$-approximation set. For any~$w \in W_1$ and any $i = 0, \dots, K$, it holds that
\begin{align*}
w^\top F(x) &= \beta \cdot \left( (K+1) \cdot w_i + (K+1) \cdot (1 - w_i) \right) \\
&= \beta \cdot \left( (K+1) \cdot w_i + (K+2 -1)\cdot (1 - w_i) \right) \\
&\leq \beta \cdot \left( (K+1) \cdot w_i + ( (K+2) \cdot \beta - 1) \cdot (1 - w_i) \right) \\
&= \beta \cdot w^\top F(x^i).
\end{align*}
Hence, the solution $x$ is a $\beta$-approximation for any $\lambda \in \Lambda$. Finally, we show that the only $\beta$-approximation set that does not contain~$x$ is $\{x^0, \dots, x^K\}$. Let $e^i \in W_1$ be the $i$th unit vector. Then, for any $j \in \{0, \dots,K\} \setminus \{i\}$, we have
\begin{align*}
\beta \cdot (e^i)^\top F(x^i) &= (K+1) \cdot \beta \\
&< (K+1) \cdot \beta + \beta - 1 \\
&= (K+2) \cdot \beta - 1 = (e^i)^\top F(x^j).
\end{align*}
Note that, by continuity of $w^\top F(x)$ in $w_i$ for $i = 0,\dots, K$, there exists a small~$t > 0$ such that, for each $i \in \{0, \dots, K\}$, the weight~$w^i$ defined by $w^i_i \colonequals 1 - \frac{K \cdot t}{K+1}$ and $w^i_j = \frac{t}{K+1}$, $j \neq i$, satisfies $\beta \cdot (w^i)^\top F(x^i) < (w^i)^\top F(x^j)$ for all $j \neq i$. Hence, the above arguments also hold for weights~$w \in W_1 \cap \R^{K+1}_>$. Therefore, the above instance shows that no $\beta$-approximation set with cardinality less than $K+1$ times the size of the smallest $\beta$-approximation set can be obtained using~$\ALG_{1}$ for linear multi-parametric optimization problems in general. 
\end{proof}

\section{Applications}\label{sec:Applications}
In this section, the established results are applied to linear multi-parametric versions of important optimization problems. The $1$-parametric versions of the shortest path problem, the assignment problem, linear mixed-integer programs, the minimum cost flow problem, and the metric traveling salesman problem have previously been covered in~\cite{Bazgan+etal:parametric}. By employing Theorem~\ref{th:runningtime}, it is easy to see that the stated results generalize to the multi-parametric case in a straightforward manner. We now apply Theorem~\ref{th:runningtime} to several other well-known problems. Note that, for a maximization problem and some~$\beta \geq 1$, a $\beta$-approximate solution for the non-parametric version~$\Pi(\lambda)$ is a feasible solution~$x \in X$ such that $f(x,\lambda) \geq \frac{1}{\beta} \cdot f(x',\lambda)$ for all $x' \in X$.

\medskip

\textbf{Multi-Parametric Minimum $s$-$t$-Cut Problem}
Given a directed graph $G=(V,R)$ with $|V|=n$ and $|R| = m$, a multi-parametric cost function $a_r + \sum_{k=1}^K b_{k,r}$ for each $r \in R$, where $a_r, b_{k,r} \in \N_{0}$, and two vertices $s,t \in V$ with $s \neq t$, the \emph{multi-parametric minimum $s$-$t$-cut problem} asks to compute an $s$-$t$-cut $(S_{\lambda},T_{\lambda})$, $s \in S_{\lambda}$ and $t \in T_{\lambda}$, of minimum total cost $\sum_{r:\alpha(r)\in S_{\lambda},\omega(r)\in T_{\lambda}} a_r + \sum_{k=1}^K \lambda_k b_r$ for each $\lambda \in \Lambda$ (where $\alpha(r)$ denotes the start vertex and $\omega(r)$ the end vertex of an arc~$r\in R$). Here, $\lmin$ can be defined by setting $\lmin_k \colonequals \max_{r \in R} \{ - \frac{a_r}{K \cdot b_{k,r}}: b_{k,r} \neq 0\}$ such that, for each parameter vector greater than or equal to~$\lmin$, the cost of each $s$-$t$-cut is nonnegative.

A positive rational upper bound~$\UB$ as in Assumption~\ref{ass:poly} can be obtained by summing up the $m$~cost components~$a_r$ and summing up the $m$~cost components~$b_{k,r}$ for each~$k$, and taking the maximum of these $K+1$~sums. The lower bound~$\LB$ can be chosen as~$\LB\colonequals 1$. The non-parametric problem can be solved in 
$\mathcal{O} \left( n \cdot m \right)$ for any fixed~$\lambda$ (cf.~\cite{Orlin:STOC13}).
Hence, an FPTAS for the multi-parametric minimum $s$-$t$-cut problem with running time $\mathcal{O}\left( (n \cdot m)(\frac{1}{\varepsilon} \log \frac{1}{\varepsilon} + \frac{1}{\varepsilon} \log(mC) )^{K} \right)$ is obtained, where~$C$ denotes the maximum value among all~$a_r,b_{k,r}$.

The number of required solutions in an optimal solution set can be super-polynomial even for $K=1$~\cite{Carstensen:parametric}. Remarkably, a recent result shows that the number of required solutions in an optimal solution set of the $K$-parametric minimum $s$-$t$-cut problem with $K >1$ can be exponential even for instances that satisfy the so-called \emph{source-sink-monotonicity}~\cite{Allman:complexity2parameterMinCut}, whereas instances of the $1$-parametric minimum $s$-$t$-cut problem satisfying source-sink-monotonicity can be solved exactly in polynomial time~\cite{Gallo+etal:parametric-flow,McCormick:parametric-max-flow}. Consequently, in the multi-parametric case, an FPTAS is the best-possible approximation result even for instances satisfying source-sink-monotonicity. 

\medskip

\textbf{Multi-Parametric Maximization of Independence Systems} Let a finite set $E=\{1,\dots,n\}$ of elements and a nonempty family~$\mathcal{F} \subseteq 2^E$ be given. The pair $(E,\mathcal{F})$ is called an \emph{independence system} if $\emptyset \in \mathcal{F}$ and, for each set~$x \in \mathcal{F}$, it follows that all its subsets $x' \subseteq x$ are also contained in $\mathcal{F}$. The elements of~$F$ are then called \emph{independent sets}. 
The \emph{lower rank}~$l(F)$ and the \emph{upper rank}~$r(F)$ of a subset~$F \subseteq E$ of elements are defined by $l(F) \colonequals \min\{ |B|: B \subseteq E, B \in \mathcal{F} \text{ and } B \cup \{e\} \notin \mathcal{F} \text{ for all } e \in E \setminus F \}$ and $r(F)  \colonequals \max\{|B| : B \subseteq E, B \in \mathcal{F} \}$, respectively. The \emph{rank quotient}~$q(E,\mathcal{F})$ of the independence system~$(E, \mathcal{F})$ is then defined as $q(E,\mathcal{F}) \colonequals \min_{F \subseteq E, l(F) \neq 0} \frac{r(F)}{l(F)}$.
Moreover, let a multi-parametric cost of the form $a_e + \sum_{k=1}^K \lambda_k b_{k,e}$, where $a_e,b_{k,e} \in \N_0$, $k=1,\dots,K$, be given for each element~$e \in E$. Then, with~$\lmin$ defined by $\lmin_k \colonequals \max_{e \in E} \{ - \frac{a_e}{K \cdot b_{k,e}}: b_{k,e} \neq 0 \}$, $k=1,\dots,K$, the \emph{multi-parametric maximization of independence systems problem} asks to compute, for each parameter vector~$\lambda$ greater than or equal to~$\lmin$, an independent set $x_\lambda \in \mathcal{F}$ of maximum cost $\sum_{e \in E} a_e + \sum_{k=1}^K \lambda_k b_{k,e}$.

Here, a positive rational upper bound~$\UB$ as in Assumption~\ref{ass:poly} can be obtained by summing up the $n$~profit components~$a_e$ and summing up the $n$~profit components~$b_{k,e}$ for each~$k$, and taking the maximum of these $K+1$~sums. The lower bound~$\LB$ can again be chosen as $\LB\colonequals 1$. 
For independence systems~$(E,\mathcal{F})$ with rank quotient~$q(E,\mathcal{F})$, it is known that the greedy algorithm is a $q(E,\mathcal{F})$-approximation algorithm for the non-parametric problem obtained by fixing any parameter vector~$\lambda$~\cite{Korte+Haussmann:GreedyIndSystems}. Hence, for any $\epsilon>0$, the maximization version of Theorem~\ref{th:runningtime} yields a $((1+\epsilon)\cdot q(E,\mathcal{F}))$-approximation algorithm with running time $\mathcal{O} \left( T_{\text{Greedy}} \cdot (\frac{1}{\varepsilon} \log \frac{1}{\varepsilon} + \frac{1}{\varepsilon} \log(nC) )^{K} \right)$, where~$C$ denotes the maximum profit component among all~$a_e, b_{k,e}$, and~$T_{\text{Greedy}}$ denotes the running time of the greedy algorithm (which can often be seen to be in $\mathcal{O}(n \log(n)$ times the running time of deciding whether $F \in \mathcal{F}$ holds for any set~$F \subseteq E$ of elements). Since the maximum matching problem (with the assignment problem as a special case) in an undirected graph~$G = (V,E)$ constitutes a special case of the maximization of independence systems problem~\cite{Korte+Haussmann:GreedyIndSystems}, the number of required solutions in an optimal solution set for the multi-parametric maximization of independence systems problem can be super-polynomial in~$n$ even for~$K=1$~\cite{Carstensen:parametric,Carstensen:PHD}.

\medskip

For example, our result yields a $((1 + \varepsilon) \cdot 2)$-approximation algorithm for the multi-parametric $b$-matching problem and a $((1 + \varepsilon) \cdot 3)$-approximation algorithm for the multi-parametric maximum asymmetric TSP (cf.~\cite{Mestre:GreedyinApproximation}). Note that the knapsack problem can also be formulated using independence systems. For this problem, an approximation scheme for the non-parametric version is known:

\medskip

\textbf{Multi-Parametric Knapsack Problem} Let a set~$E = \{1,\dots,n\}$ of items and a budget~$W \in \N_0$ be given. Each item~$e \in E$ has a multi-parametric profit of the form $a_e + \sum_{k=1}^K \lambda_k \cdot b_{k,e}$, where $a_e, b_{k,e} \in \N_0$, $k=1,\dots,K$, are nonnegative integers, and a weight~$w_e \in \N_0$. The parameter vector~$\lmin$ is chosen by $\lmin_k \colonequals \max_{e \in E} \{ - \frac{a_e}{K \dot b_{k,e}}: b_{k,e} \neq 0 \}$, $K=1, \dots,K$, such that, for each set~$x \subseteq \{1,\dots,n\}$ of items, the profit components $\sum_{e \in x} a_e$ and $\sum_{e\in x} b_{k,e}$, $k = 1,\dots,K$, are nonnegative. Then, the multi-parametric knapsack problem asks to compute a subset~$x \subseteq E$ satisfying~$\sum_{e \in x} w_e \leq W$ of maximum profit for each parameter vector~$\lambda$ greater than or equal to~$\lmin$.

For this problem, a positive rational upper bound~$\UB$ as in Assumption~\ref{ass:poly} can again be obtained by summing up the $n$~profit components~$a_e$ and summing up the $n$~profit components~$b_{k,e}$ for each~$k$, and taking the maximum of these $K+1$~sums. The lower bound~$\LB$ can again be chosen as $\LB\colonequals 1$. The currently best approximation scheme for the non-parametric problem is given in~\cite{Kellerer1999new,Kellerer:ImprovedDPwithFPTAS}, which computes, for any~$\varepsilon' > 0$, a feasible solution whose profit is no worse than $(1-\varepsilon')$ times the profit of any other feasible solution in time $ \mathcal{O}\left( n \cdot \min \left\{\log n, \log \frac{1}{\varepsilon'} \right\} + \frac{1}{\varepsilon'}\log \frac{1}{\varepsilon'}\cdot \min \left\{ n , \frac{1}{\varepsilon'} \log \frac{1}{\varepsilon'} \right\} \right)$.
Assuming that~$n$ is much larger than $\frac{1}{\varepsilon'}$ (cf.~\cite{Kellerer+etal:book}), choosing $\varepsilon'= \frac{1}{2} \cdot \varepsilon$ and applying the maximization version of Theorem~\ref{th:runningtime} with $\frac{1}{2} \cdot \varepsilon$ yields an FPTAS for the multi-parametric knapsack problem with running time
$\mathcal{O}\left(  (n \log \frac{1}{\varepsilon} + \frac{1}{\varepsilon^3} \log^2 \frac{1}{\varepsilon} ) (\frac{1}{\varepsilon} \log \frac{1}{\varepsilon} + \frac{1}{\varepsilon} \log(nC) )^{K} \right) $, where~$C$ denotes the maximum profit component among all~$a_e, b_{k,e}$.

Again, the number of required solutions in an optimal solution set can be super-polynomial in~$n$ even for $K=1$~\cite{Carstensen:PHD}.

\section{Conclusion}
Exact solution methods, complexity results, and approximation methods for multi-parametric optimization problems are of major interest in recent research. In this paper, we establish that approximation algorithms for many important non-parametric optimization problems can be lifted to approximation algorithms for the multi-parametric version of such problems. The provided approximation guarantee is arbitrarily close to the approximation guarantee of the non-parametric appro\-ximation algorithm. This implies the existence of a multi-parametric FPTAS for many important multi-parametric optimization problems for which optimal solution sets require super-polynomially many solutions in general.

Moreover, our results show that computing an approximation set containing the smallest-possible number of solutions is not possible in general. However, practical routines to reduce the number of solutions in the approximation set, based for example on the convexity property of Lemma~\ref{prop:alphaconvex} and the approximation method in~\cite{Bazgan+etal:parametric}, might be of interest.
Another direction of future research could be the approximation of multi-parametric MILPs with parameter dependencies in the constraints. Here, relaxation methods or a multi-objective multi-parametric formulation of the problems may provide a suitable approach. 
%

\bibliographystyle{spmpsci}
\bibliography{Literatur}

\section*{Appendix}
	\label{appA}
	The proof of Theorem~\ref{th:notccompetitive} stated in~\cite{Diakonikolas2011approximation}, Theorem~5.4.12, is imprecise, as the following example shows.
	\begin{example}
	We define two instances~$A^1$ and~$A^2$ of the augmented multi-parametric problem of a  multi-parametric optimization problem with parameter set~$\Lambda = \R^2_\geqq$ following the construction presented in the proof of Theorem~5.4.12 in~\cite{Diakonikolas2011approximation}. Instance~$A^1$ has feasible set $X^1 = \{x,x^1,x^2,x^3\}$ and instance~$A^2$ has feasible set $x^2 =\{x,x^1,x^2,x^3,\bar{x}^1,\bar{x}^2, \bar{x}^3\}$. Both instances have the same objective function
	\begin{align*}
	w^\top F(x) = w_0 \cdot F_0(x) + w_1 \cdot F_1(x) + w_2 \cdot F_2(x).
	\end{align*}
	For given $\beta > 1$ and $z_0 >1$, we construct a set~$Q = \{x^1,x^2,x^3\}$ such that $F_0(x^1) = F_0(x^2) = F_0(x^3) = z_0$, and for which the smallest $\beta$-approximation set for an instance with feasible set~$Q$ is~$Q$ itself. Then, we construct a solution~$x$ such that $F_0(x) = \beta \cdot z_0$, $\beta \cdot F_1(x) < F_1(x^\ell)$, and $\beta \cdot F_2(x) < F_2(x^\ell)$ for $\ell = 1,2,3$. Thus, $\{x\}$ is a smallest $\beta$-approximation set for~$A^1$. Finally, we choose~$\bar{x}^\ell$ such that $F_0(\bar{x}^l) = z_0 -1$, $F_{1} (\bar{x}^l) = F_{1}(x^l)$, and $F_{2} (\bar{x}^l) = F_{2}(x^l)$ for $\ell =1,2,3$. We have to show that, for large~$z_0$, the set~$\{x,\bar{x}^1, \bar{x}^3\}$ is a $\beta$-approximation set for~$A^2$. Then, $\{x,\bar{x}^1, \bar{x}^2, \bar{x}^3\}$ is not a smallest $\beta$-approximation set as claimed in~\cite{Diakonikolas2011approximation}.\\
	
	\noindent
	Let~$\beta > 1$ and~$z_0 > 1$ and define 
	\begin{align*}
	F_0(x) \colonequals \beta \cdot z_0,&&F_1(x) &\colonequals 1,&& F_2(x)\colonequals 1,\\
	F_0(x^1) \colonequals z_0,&&F_1(x^1) &\colonequals \beta^2,&& F_2(x^1) \colonequals 2\beta^6 - \beta^2, \\
	F_0(x^2) \colonequals z_0,&&F_1(x^2) &\colonequals \beta^4,&& F_2(x^2) \colonequals \beta^4,\\
	F_0(x^3) \colonequals z_0,&&F_1(x^3) &\colonequals 2\beta^6 - \beta^2,&&F_2(x^3) \colonequals \beta^2.
	\end{align*}
	Then, $\beta \cdot F_i(x) < F_i(x^\ell)$ for $i=1,2$ and $\ell = 1,2,3$.
	Further, define
	\begin{align*}
	F_0(\bar{x}^\ell) \colonequals z_0 -1,&&F_1(\bar{x}^\ell) &\colonequals F_1(x^\ell),&&F_2(\bar{x}^\ell) \colonequals F_2 (x^\ell), 
	\end{align*}
	for $\ell=1,2,3$. In the following, we show that:
	\begin{enumerate}
		\item The set $\{x^1,x^2,x^3\}$ is a smallest $\beta$-approximation set for the instance with solution set $\{x^1,x^2,x^3\}$ and objective $w^\top F(x)$. That is, there exists weights $w^1,w^2,w^3 \in \R^3_\geq$ such that
		\begin{align*}
		\beta (w^1)^\top F(x^1) < (w^1)^\top F(x^2)\text{ and } \beta \cdot (w^1)^\top F(x^1) < (w^1)^\top F(x^3),\\
		\beta (w^2)^\top F(x^2) < (w^2)^\top F(x^1)\text{ and } \beta \cdot(w^2)^\top F(x^2) < (w^2)^\top F(x^3),\\
		\beta (w^3)^\top F(x^3) < (w^3)^\top F(x^1)\text{ and } \beta \cdot (w^3)^\top F(x^3) < (w^3)^\top F(x^2).
		\end{align*}\label{item:counterexample1}
		\item For $z_0 \geq \frac{\beta^2}{\beta -1} + 1$, the set $\{x,\bar{x}^1,\bar{x}^3\}$ is a $\beta$-approximation set for the instance with solution set $\{x,x^1,x^2,x^3,\bar{x}^1,\bar{x}^2,\bar{x}^3\}$ and objective $w^\top F(x)$. More precisely, for $w \in \R^3_\geq$,
		\begin{itemize}
			\item if $w_0 \leq \frac{\beta^5 -1}{\beta} \cdot (w_1 + w_2)$, then $w^\top F(x) \leq \beta \cdot w^\top F(\bar{x}^2)$, 
			\item if $w_0 \geq \frac{\beta^5 -1}{\beta} \cdot (w_1 + w_2)$ and $w_1 \geq w_2$, then $w^\top F(\bar{x}^1) \leq \beta \cdot w^\top F(\bar{x}^2)$,
			\item if $w_0 \geq \frac{\beta^5 -1}{\beta} \cdot (w_1 + w_2)$ and $w_1 \leq w_2$, then $w^\top F(\bar{x}^3) \leq \beta \cdot w^\top F(\bar{x}^2)$.
		\end{itemize}\label{item:counterexample2}
	\end{enumerate}
	Note that these three statements suffice since $F_i(\bar{x}^\ell) \leq F_i(x^\ell)$ for $i = 1,2,3$ and $\ell = 1,2,3$.\\
	In order to show Statement~\ref{item:counterexample1}, choose  $w^1 \colonequals (2\beta^7 - \beta^3 - \beta^4 + 1, \beta^4 - \beta^3,0)^\top$. Then,
	\begin{align*}
	\beta \cdot (w^1)^\top F(x^1) 
	&= 2\beta^{11} - \beta^7 + \beta^3  - \beta^7\\
	&< 2\beta^{11} - \beta^7 + \beta^4 -\beta^7 \\
	&= (w^1)^\top F(x^2)
	\end{align*}
	and the inequality $\beta \cdot (w^1)^\top F(x^1) < (w^1)^\top F(x^3)$ is equivalent to
	\begin{align*}
	4\beta^{13} - 2\beta^{11} - 2\beta^{10} &- 4\beta^9 + 2\beta^7 + 4\beta^6 - \beta^3 - \beta^2 > 0,
	\end{align*}
	which is satisfied for all $\beta >1$.\\
	Next, choose $w^3 \colonequals (\beta^4 - \beta^3,2\beta^7 - \beta^3 - \beta^4 + 1, 0)^\top$. Then, the inequalities~$\beta \cdot (w^3)^\top F(x^3) < (w^3)^\top F(x^1)$ and $\beta \cdot (w^3)^\top F(x^3) < (w^3)^\top F(x^2)$ can be shown analogously.\\
	Finally, choose $w^2 \colonequals (1,1,0)^\top$. Then,
	\begin{align*}
	\beta \cdot (w^2)^\top F(x^2) = 2\beta^5 < 2\beta^6 = 2\beta^6 - \beta^2 + \beta^2 &= 
	(w^2)^\top F(x^1).
	\end{align*}
	Since $(w^2)^\top F(x^1) = (w^2)^\top F(x^3)$, this proves Statement~\ref{item:counterexample1}.\\
	
	\noindent
	In order to prove Statement~\ref{item:counterexample2}, let $w \colonequals (w_0,w_1,w_2,) \in \R^3_\geq$ with $w_0 \leq \frac{\beta^5 -1}{\beta} \cdot (w_1 + w_2)$. Then,
	\begin{align*}
	w^\top F(x) &= \beta \cdot w_0 \cdot z_0 + w_1 + w_2 \\
	&=  \beta \cdot w_0 \cdot (z_0 - 1) + w_1 + w_2 + \beta \cdot w_0\\
	&\leq \beta \cdot w_0 \cdot (z_0 - 1) + w_1 + w_2 + \beta \cdot \frac{\beta^5 -1}{\beta} (w_1 + w_2)\\
	&=  \beta \cdot w_0  \cdot (z_0 - 1) + \beta^5 \cdot (w_1 + w_2) \\
	&= \beta \cdot w^\top F(\bar{x}^2).
	\end{align*}
	Next, let $w \colonequals (w_0,w_1,w_2)^\top \in \R^3_\geq$ with $w_0 \geq \frac{\beta^5 -1}{\beta} \cdot (w_1 + w_2)$ and $w_1 \geq w_2$. The latter inequality implies that
	\begin{align*}
	&&(1 - \beta^4) \cdot w_1 &\leq ( 1 - \beta^4) \cdot w_2 &&\\
	\Rightarrow&& (1 - \beta^4) \cdot w_1 &\leq ( \beta^4 + 1 - 2\beta^4 ) \cdot w_2 &&\\
	\Rightarrow&& w_1 + 2 (\beta^4 - 1) \cdot w_2 &\leq \beta^4 \cdot (w_1 + w_2) && \\
	\Rightarrow&& \beta^2 \cdot w_1  + (2\beta^6 - \beta^2) \cdot w_2  &\leq  \beta^6 \cdot (w_1 + w_2).&&
	\end{align*}
	Hence, for $z_0 \geq \frac{\beta^2}{\beta -1} +1$,
	\begin{align*}
	w^\top F(\bar{x}^1) &= w_0 \cdot (z_0 -1) +  \beta^2 \cdot w_1  +  (2\beta^6 - \beta^2) \cdot w_2\\
	&\leq w_0 \cdot (z_0-1) + \beta^6 \cdot (w_1 + w_2) \\
	&=  \beta \cdot w_0 \cdot ( z_0-1) + \beta^6  \cdot (w_1 + w_2)  - (\beta - 1) \cdot w_0 \cdot (z_0 -1)\\
	&\leq \beta \cdot w_0 \cdot ( z_0-1) + \beta^6 \cdot (w_1 + w_2) - (\beta - 1) \cdot \frac{\beta^5 - 1}{\beta} \cdot (w_1 + w_2) \cdot \frac{\beta^2}{\beta - 1} \\
	&= \beta \cdot w_0 \cdot ( z_0-1) + w_1 + w_2 \\
	&< \beta \cdot w^\top F(\bar{x}^2).
	\end{align*}
	The remaining statement for $w_0 \geq \frac{\beta^5 -1}{\beta} \cdot (w_1 + w_2)$ and $w_2 \geq w_1$ follows by symmetry. \hfill $\vartriangleleft$
\end{example}
	\noindent
	Nevertheless, as we now shown, the idea of~\cite{Diakonikolas2011approximation} remains valid with a more careful construction.
	
	\begin{proof}
	Given~$L \in \N$ and~$\beta > 1$, two instances~$A^1, A^2$ for an augmented multi-parametric optimization problem of a multi-parametric optimization problem~$\Pi$ are constructed such that a smallest $\beta$-approximation set for~$A^2$ is $L+1$~times as large as a smallest $\beta$-approximation set for~$A^1$ and an algorithm has to call~$\ALG_{1 + \delta}$ for some~$\delta$ with $\frac{1}{\delta}$ exponentially large in the input size in order to distinguish between the two instances.
	
	Let $W = \R^{3}_\geqq$ and $z_0 \geq \beta > 1$. Define solutions $x^*$, $x^\ell$, $\ell = 1, \dots, L$ such that $F_0(x^*) = \beta \cdot z_0$ and $F_0(x^\ell) = z_0$ for~$\ell = 1, \dots, L$, and $\beta \cdot F_1(x^*) < F_1(x^\ell) < 1$  and $\beta \cdot F_2(x^*) < F_2(x^\ell) < 1$ for $\ell = 1, \dots, L$, and such that, for each~$x^\ell$, there exists a weight~$w^\ell \in W$ with
	\begin{align}\label{eq:approxwithminimalsolutionset}
	( (\beta-1) \cdot z_0 + \beta) \cdot (w^\ell_1 F_1(x^\ell) + w^\ell_2 F_2(x^\ell)) < w^\ell_1 F_1(x^m) + w^\ell_2 F_2(x^m)
	\end{align}
	for all $\ell,m \in \{1, \dots,L\}$ with $\ell \neq m$.\footnote{For example 
		$F_1(x^*) \colonequals F_2(x^*) \colonequals (2 z_0 \cdot \beta)^{-2-n}$ and 
		$F_1(x^\ell) \colonequals (2 z_0 \cdot \beta)^{\ell - L}$, 
		$F_2(x^\ell) \colonequals (2 z_0 \cdot \beta)^{1 - \ell}$ 
		with weights $w^\ell_1 \colonequals (2 z_0 \cdot \beta)^{L-\ell}$ and $w^\ell_2 \colonequals (2 z_0 \cdot \beta)^{\ell-1}$.} 
	Further, define solutions $\bar{x}^\ell$, $\ell = 1, \dots, L$ with $F_0(\bar{x}^\ell) = z_0 -1$, $F_1(\bar{x}^\ell) = F_1(x^\ell)$ and $F_2(\bar{x}^\ell) = F_2(x^\ell)$ for $\ell = 1, \dots, L$. Set $X \colonequals \{ x^1, \dots, x^L\}$ and $\bar{X} \colonequals \{\bar{x}^1, \dots, \bar{x}^L\}$.
	
	Let instance~$A^1$ have feasible set $X \cup \{x^*\}$, and instance~$A^2$ have feasible set~$X \cup \bar{X} \cup \{x^*\}$.
	In order to distinguish between the two instances for some weight~$w$, an algorithm must be guaranteed to obtain different results when calling $\ALG_{1 + \delta}$ on~$A^1$ and on~$A^2$. Therefore,$\delta$ and~$w$ have to be chosen such that neither~$x^*$ nor any~$x^\ell$ are a $(1 + \delta)$-approximation for~$w$ in~$A^2$. That is, for some $\ell \in \{1, \dots, L\}$, we must have that $w^\top F(x) > (1 + \delta) \cdot w^\top F(\bar{x}^\ell)$ for all $x \in X \cup \bar{X} \cup \{x^*\} \setminus \{\bar{x}^\ell\}$. In particular, we must have
	\begin{align*}
	w^\top F(x^\ell) > (1 + \delta) w^\top F(\bar{x}^\ell).
	\end{align*}
	This implies that
	\begin{align*}
	\Rightarrow&& z_0 &> (1 + \delta) \cdot (z_0 - 1)&& \\
	\Rightarrow&& \delta &< \frac{1}{z_0 - 1}&&\\
	\Rightarrow&& \frac{1}{\delta} &> z_0 -1.&&
	\end{align*}
	Since $z_0$ might be exponentially large in the instance size, $\delta$ might has to be chosen such that $\frac{1}{\delta}$ is exponential in the instance size in order to distinguish between $A^1$ and $A^2$.
	
	In remains to show that $\{x^*\}$ is a $\beta$-approximation set of minimum cardinality for $A^1$, whereas $\bar{X} \cup \{x^*\}$ is a $\beta$-approximation set of minimum cardinality for $A^2$.
	Since 
		\begin{align*}
		w^\top F(x^*) &= w_0F_0(x^*) +w_1 F_1(x^*) + w_2 F_2(x^*)\\
		&= \beta \cdot w_0F_0(x^\ell) +w_1 F_1(x^*) + w_2 F_2(x^*)\\
		&\leq \beta \cdot (w_0F_0(x^\ell) +w_1 F_1(x^\ell) + w_2 F_2(x^\ell)) = \beta \cdot w^\top F(x^\ell)
		\end{align*}
		for all $\ell  = 1, \dots ,L$ and $w \in W$, the set $\{x^*\}$ is a $\beta$-approximation set with minimum cardinality for instance $A^1$.\label{item:proofapproxwithminimal1}
		
		The set~$\bar{X} \cup \{x^*\}$ is also the $\beta$-approximation set with minimal cardinality for instance~$A^2$:\\
		Clearly, 
		$\bar{X} \cup \{x^*\}  $ is a $\beta$-approximation set for $A^2$ since $\{x^*\}$ is a $\beta$-approximation set for $A^1$.
		Let $d >0$ such that $\beta \cdot F_i(x^*) + d< F_i(\bar{x}^\ell)$ for $i = 1,2$ and $\ell = 1, \dots,L$. Choose $w_0 < \frac{d}{\beta^2 z_0}$ and $w_1 = w_2 = \frac{1}{2}$. Then
		\begin{align*}
		\beta \cdot  w^\top F(x^*) &= \beta^2 \cdot w_0  \cdot z_0 + w_1  \beta \cdot F_1(x^*) + \beta  \cdot w_2 F_2(x^*)\\
		&< d + \frac{1}{2} \cdot \beta \cdot F_1(x^*) + \frac{1}{2} \cdot \beta \cdot F_2(x^*)\\
		&< \frac{1}{2} \cdot F_1(\bar{x}^\ell) + \frac{1}{2} \cdot  F_2(\bar{x}^\ell)\\
		&\leq w^\top F(\bar{x}^\ell)
		\end{align*}
		for $\ell = 1, \dots, L$. Therefore, the solution~$x^*$ must be contained in every $\beta$-approximation set.
		Set $\bar{w}^\ell = \left( \frac{z_0}{z_0 -1} \cdot ( \bar{w}^\ell_1 F_1(\bar{x}^\ell) + \bar{w}^\ell_2 F_2(\bar{x}^\ell)),w^\ell_1, w^\ell_2 \right)^\top \in W$ for $\ell = 1, \dots, L$. Then,
		$$w_0 F_0(\bar{x}^l) = z_0 \cdot \left(\bar{w}^l_1 F_1(\bar{x}^l) + \bar{w}^l_2 F_2(\bar{x}^l) \right).$$
		\enlargethispage{\baselineskip}
		On the one hand, it holds that
		\begin{align*}
			\beta \cdot (\bar{w}^\ell)^\top F(\bar{x}^\ell)
			& = \beta \cdot (1 - z_0) \cdot \left(\bar{w}^\ell_1 F_1(\bar{x}^\ell) +\bar{w}^\ell_2 F_2(\bar{x}^\ell \right) \\
			 & < z_0 \cdot \left(\bar{w}^\ell_1 F_1(\bar{x}^\ell) +\bar{w}^\ell_2 F_2(\bar{x}^\ell)\right) + \bar{w}^\ell_1 F_1(\bar{x}^m) +\bar{w}^\ell_2 F_2(\bar{x}^m)\\
			 & = \bar{w}^\ell_0 F_0(\bar{x}^\ell) + \bar{w}^\ell_1 F_1(\bar{x}^m) +\bar{w}^\ell_2 F_2(\bar{x}^m) \\
			& = \bar{w}^\ell_0 F_0(\bar{x}^m) + \bar{w}^\ell_1 F_1(\bar{x}^m) +\bar{w}^\ell_2 F_2(\bar{x}^m) \\
			& = (\bar{w}^\ell)^\top F(\bar{x}^m)\\
			& \leq (\bar{w}^\ell)^\top F(x^m)
		\end{align*}
		for all $\ell,m =1, \dots, L$ with $\ell \neq m$, where the last inequality holds by~\eqref{eq:approxwithminimalsolutionset}.
		On the other hand, it holds that
		\begin{align*}
		\beta \cdot (\bar{w}^\ell)^\top F(\bar{x}^\ell) & = \beta \cdot \left(\bar{w}^\ell_0 F_0(\bar{x}^\ell) + \bar{w}^\ell_1 F_1(\bar{x}^\ell) +\bar{w}^\ell_2 F_2(\bar{x}^\ell)\right)\\
		& = \beta \cdot (z_0 +1) \cdot \left(\bar{w}^\ell_1 F_1(\bar{x}^\ell) +\bar{w}^\ell_2 F_2(\bar{x}^\ell)\right) \\
		& = \beta \cdot \frac{z_0^2}{ z_0 - 1} \cdot  \left(\bar{w}^\ell_1 F_1(\bar{x}^\ell) +\bar{w}^\ell_2 F_2(\bar{x}^\ell)\right)\\
		& = F_0(x^*) \cdot \frac{z_0}{z_0 - 1} \cdot   \left(\bar{w}^\ell_1 F_1(\bar{x}^\ell) +\bar{w}^\ell_2 F_2(\bar{x}^\ell)\right) \\
		& = \bar{w}^\ell_0 F_0(x^*)\\
		& \leq (\bar{w}^\ell)^\top F(x^*)
		\end{align*}
		for $\ell = 1, \dots, L$. Hence, any $\beta$-approximation set for~$A^2$ must contain either~$x^\ell$ or~$\bar{x}^\ell$ for each~$\ell = 1, \dots, L$, which proves the claim.  
	\end{proof}

\end{document}